\documentclass[11pt,reqno]{amsart}
\usepackage{fullpage}
\usepackage{amsmath,amsthm,verbatim,amssymb,amsfonts,amscd, graphicx,bbm,bm}
\usepackage{graphics}
\usepackage{mathtools}
\usepackage{mathrsfs}
\usepackage{svg}
\usepackage{esint}
\usepackage{color}
\usepackage{tikz,tikz-cd}
\usepackage[all,cmtip]{xy}

\mathtoolsset{showonlyrefs}

\usetikzlibrary{decorations.pathreplacing,arrows.meta,calc,patterns,angles,quotes,decorations.markings}

\usepackage{xcolor}
\usepackage[pdfstartview=FitH, bookmarksnumbered=true,bookmarksopen=true, colorlinks=true, pdfborder=001, citecolor=blue, linkcolor=blue,urlcolor=blue]{hyperref}

\newtheorem{theorem}{Theorem}
\newtheorem{proposition}[theorem]{Proposition}
\newtheorem{lemma}[theorem]{Lemma}
\newtheorem{corollary}[theorem]{Corollary}

\theoremstyle{definition}
\newtheorem{remark}{Remark}

\newtheorem{definition}[theorem]{Definition}

\newcommand{\cref}[1]{Corollary~\ref{c.#1}}

\newcommand{\R}{\mathbb{R}}
\newcommand{\eps}{\varepsilon}

\numberwithin{equation}{section}
\numberwithin{theorem}{section}

\title{Minnaert resonances and higher-order acoustic modes for bubbles in a viscous fluid with surface tension}

\author[H. Ammari]{Habib Ammari}
\address[H. Ammari]{ETH Z\"urich, Department of Mathematics, Rämistrasse 101, 8092 Z\"urich, Switzerland and 
  Hong Kong Institute for Advanced Study, City University of Hong Kong, Kowloon Tong, Hong Kong Special Administrative Region.}
\email{habib.ammari@math.ethz.ch}

\author[X. Fu]{Xin Fu}
\address[X. Fu]{School of Science, Institute for Theoretical Sciences, Westlake University, Hangzhou, Zhejiang Province, 310030, P.R. China}
\email{1550862645cf@gmail.com}

\author[W. Jing]{Wenjia Jing}
\address[W. Jing]{Yau Mathematical Sciences Center, Tsinghua University, Beijing 100084 and Beijing Institute of Mathematical Sciences and Applications, Beijing 101408, P.R. China}
\email{wjjing@tsinghua.edu.cn}

\author[H. Li]{Hongjie Li}
\address[H. Li]{Yau Mathematical Sciences Center, Tsinghua University, Beijing, China.}
\email{hongjieli@tsinghua.edu.cn; hongjie\_li@yeah.net}

\keywords{micro-bubble, Minnaert resonance, higher-order acoustic mode, viscosity, surface tension, hydrodynamic layer potential, asymptotic formula.}

\subjclass[2020]{35B34, 47G40, 76D33}
\date{\today}

\begin{document}

\begin{abstract}
The aim of this paper is to account for viscosity, surface tension, and interactions between micro-bubbles in approximating their resonant behavior in the ultrasonic regime.
Original asymptotic formulas for the resonance frequencies are derived in terms of the difference in the acoustic impedance at the interface between the gas and the fluid.  Both low-frequency resonances (Minnaert resonances) and higher-frequency resonances, i.e., beyond the subwavelength regime, are considered. We also provide a resonant characterization for a system of several micro-bubbles.   
\end{abstract}

\maketitle

\tableofcontents

\section{Introduction}

Micro-bubbles are used as ultrasonic contrast agents that enhance the echo from blood due to the large acoustic impedance difference at the interface between gas and blood \cite{review-bubbles,review-contrast-agent,review-contrast-agent-2,outi}. The scattering of ultrasonic waves from the micro-bubbles results in a strong signal at the receiver \cite{transient,review-contrast-agent,microbubble-spectroscopy} due to their resonant frequencies. Contrast-enhanced ultrasound is believed to provide images with the highest quality if it operates at the frequency at which the major part of the micro-bubble population resonates. 

Furthermore, the micro-bubble dynamics can then be tracked by ultra-fast acoustic imaging, allowing non-invasive imaging deep into organs at microscopic scales \cite{tanter1,biomedical}. The micro-bubble behavior can also be characterized optically by insonifying the bubbles and recording their responses with an ultrahigh-speed camera \cite{microbubble-spectroscopy}. In both of these emerging biomedical imaging techniques, an accurate model is needed to analyze the resonant behavior of the micro-bubbles and relate it to their environment. Nevertheless, in practice, an idealized approximation of the resonant behavior of the micro-bubbles is widely used. The bubbles are assumed to oscillate at the undamped natural frequency, that is, their lowest-order acoustic mode, which we refer to as the Minnaert frequency. The viscosity of the liquid, the surface tension, and the interactions between the micro-bubbles are neglected, and consequently,  the (measured) frequency at which the signal scattered by a micro-bubble has a resonance peak is not equal to the natural frequency of the micro-bubble. In \cite{khismatullin2004resonance}, it is shown that the resonant frequency of a micro-bubble deviates significantly from the Minnaert resonance due to viscosity. Explicit formulas for the shift of the natural frequency of a spherical micro-bubble due to viscosity and surface tension are given in \cite{khismatullin2004resonance, samek1989multiscale}. We also refer the reader to \cite{add1,add2,add3,add5,add4} for the derivations of the linearized model and oscillations of non-spherical bubbles.

On the other hand, in \cite{ammari2025mathematical,ammari2024functional}, the hybridization of the resonant frequencies due to interactions between micro-bubbles is analyzed using the capacitance matrix formalism and the splitting of the natural frequency into hybridized resonant frequencies is quantified.

The present work is also connected to recent mathematical studies of subwavelength resonances in high-contrast media. In \cite{li2022minnaert}, Minnaert resonances were justified for acoustic bubbles embedded in soft elastic materials, where the acoustic oscillation inside the bubble is coupled with elastic wave propagation in the surrounding background. Related work on multi-layer high-contrast acoustic resonators \cite{deng2025multilayer} and on close-to-touching Helmholtz subwavelength resonators \cite{dong2025close} further shows how geometry, material contrast, and resonator separation can influence resonant frequencies and field enhancement. 
Dipolar, quadrupolar, and hybrid resonant modes generated by hard inclusions embedded in soft elastic materials were mathematically justified and analyzed in \cite{li2026resonant, lxy7007, li2025dipolar}, where stress estimates for a pair of hard inclusions were also established. These results motivate a formulation that retains the full coupled physics at the interface; in the present paper, this coupling is governed by viscous hydrodynamic stresses and surface tension rather than by an elastic background.

In this paper, we rigorously derive formulas for the resonant frequencies in the ultrasonic regime of a collection of micro-bubbles in a viscous fluid. Our formulas account for the effect of viscosity and surface tension for general shaped micro-bubbles as well as their interactions. Our derivations are based on layer-potential techniques designed for the full system describing the underlying ultrasound scattering properties of micro-bubbles. Based on our integral formulation of the acoustic resonance problem, we also consider higher-order acoustic modes of a micro-bubble and derive their asymptotic expansions in terms of the acoustic impedance difference, generalizing the recent results obtained in \cite{fabry3d}.  Higher-order modes of micro-bubbles have been experimentally observed in \cite{ultrasonic}. These acoustic modes have  practical applications in fields such as microfluidics and bio-sensing.
 
To the best of our knowledge, this is the first mathematical paper to deal with a realistic model for ultrasound contrast agents and to derive not only Minnaert-type resonances but also higher-order acoustic modes for micro-bubbles of arbitrary shapes. 

Our paper is organized as follows. Section \ref{sec:1} is devoted to the formulation of the scattering resonance problem. In Section \ref{sec:2}, we introduce the normal hydrodynamic Neumann-to-Dirichlet operator and characterize the scattering resonances as characteristic values of a boundary integral operator, which is Fredholm of index zero and analytic with respect to the frequency in a neighborhood of zero. Asymptotic analysis of this operator in terms of the acoustic impedance difference gives an original approximate formula for the Minnaert-type resonance of a micro-bubble in a viscous fluid with surface tension. In Section \ref{sec:3}, we consider the particular case of a spherical bubble and reproduce known formulas in the field. In Section \ref{sec:4}, we consider the collective behavior of an ensemble of micro-bubbles.  
In Section \ref{sec:5}, we use the same boundary integral formulation of the problem as in Section \ref{sec:2} but perform the asymptotic analysis in terms of the acoustic impedance difference in a neighborhood of a nonzero Neumann eigenvalue of the Laplacian in the micro-bubble. This yields the approximation of higher acoustic modes of a micro-bubble in a viscous fluid with surface tension. In the appendices, we derive the basic equations for the two-phase flow assuming small-amplitude oscillations and treating the system as a harmonic oscillator as classically done in the literature, state key properties of hydrodynamic layer potentials, and recall several useful definitions and results from differential geometry. 

It is worth mentioning that our techniques and approaches in this paper can be extended to encapsulated micro-bubbles with arbitrary shapes. In fact, combining the results obtained in this paper with those in \cite{ammari2019encapsulated}, we can obtain asymptotic formulas of Minnaert-type and higher-order acoustic modes when the micro-bubbles are encapsulated in a thin coating. The effect of this coating on the acoustic resonances can be easily evaluated as the thickness of the coating is much smaller than the typical size of the micro-bubble.     

\section{Problem set-up} \label{sec:1}

We consider the scattering of waves in a homogeneous viscous fluid by a gas bubble embedded inside. Denote by $\rho_b$ and $\kappa_b$ the density and bulk modulus of the gas inside the bubble, and by $\rho$ and $\kappa$ the corresponding quantities for the surrounding fluid. Then their sound speeds $c_b$ and $c$ satisfy
\begin{equation}
    \kappa_b = \rho_b c_b^2, \quad \quad \kappa = \rho c^2.
\end{equation}
We also assume that the surrounding fluid is viscous, characterized by its dynamic viscosity $\mu$ and second viscosity $\lambda$ satisfying \cite[(15.4)]{landau1987fluid}
\begin{equation}
    \mu > 0, \qquad 3\lambda+2\mu\geq  0.
\end{equation}

Let $D\subset \mathbb{R}^3$ be a bounded smooth domain, which denotes the region occupied by the bubble in its equilibrium state. Considering the time-harmonic outgoing solution with the convention $\mathrm{e}^{\mathrm{i}\omega t}$, the problem can be formulated as follows:
\begin{equation}\label{maineq}
\left\{
\begin{array}{@{}l@{}}
\begin{alignedat}{3}
    &\mathrm{i}\omega p + \kappa_b \nabla \cdot \mathbf{v} =0,
    &\qquad& \mathrm{i}\omega \rho_b \mathbf{v} = -\nabla p
    &\qquad& \mathrm{in}\ D, \\
    &\mathrm{i}\omega p + \kappa \nabla \cdot \mathbf{v} =0,
    &\qquad& \mathrm{i}\omega \rho \mathbf{v}
    = \nabla \cdot \mathbf{T}(p,\mathbf{v})
    &\qquad& \mathrm{in}\ \mathbb{R}^3 \setminus \overline{D}, \\
    &\mathbf{v}\cdot \mathbf{n}|_- = \mathbf{v}\cdot \mathbf{n}|_+,
    &\qquad& p \mathbf{n}|_-+ \mathbf{T}(p,\mathbf{v})\mathbf{n}|_+
    = \frac{\sigma}{\mathrm{i}\omega}
    \mathcal{H}(\mathbf{v}\cdot \mathbf{n})\mathbf{n}
    &\qquad& \mathrm{on}\ \partial D,
\end{alignedat}\\[1mm]
\mathbf{v}\ \text{satisfies a radiation condition}.
\end{array}
\right.
\end{equation}
 Here, $\mathbf{T}(p,\mathbf{v})$ denotes the Cauchy stress tensor corresponding to the fluid:
\begin{equation}
    \mathbf{T}(p,\mathbf{v}) := - p \mathbf{I} + 2\mu \nabla^s \mathbf{v}  + \lambda (\nabla \cdot \mathbf{v}) \mathbf{I}, \qquad \nabla^s \mathbf{v} = \frac{1}{2}\big(\nabla \mathbf{v} + \nabla \mathbf{v}^\top  \big);
\end{equation}
$\sigma>0$ denotes the surface tension; $\mathcal{H}$ denotes the operator associated with the first variation of the mean curvature:
\begin{equation}\label{Hf}
    \mathcal{H}(f) := -(\Delta_{\partial D} + |B|^2) f,\qquad \forall f\in C^{\infty}(\partial D),
\end{equation}
where  $|B|^2$ is the squared norm of the second fundamental form of $\partial D$;
see \eqref{defLBel} and \eqref{defsqaurenorm} for an explicit definition of the operator $\mathcal{H}$. The radiation condition for $\mathbf{v}$ will be justified in a dimensionless form; see Section \ref{secradiationcind}.

For the physical derivation of problem \eqref{maineq}, we refer to Appendix \ref{appenequ}.

\subsection{Nondimensionalization}

Let $a$ be the typical size (for instance, the diameter) of the domain $D$ and set
\begin{equation}\label{rescaleD}
    x=ax', \qquad D=aD'.
\end{equation}
 Consequently, $\nabla_x = a^{-1} \nabla_{x'}$. 

We introduce the dimensionless unknowns as
\begin{equation}\label{nondimunknown}
    \mathbf{v} (x)= V \mathbf{v}'(x'), \qquad p(x) = Pp'(x'),
\end{equation}
where $V>0$ is a reference velocity and the reference pressure is chosen as
\begin{equation}
    P:=\frac{\rho_b \kappa}{\mathrm{i}\omega \rho a} V.
\end{equation}

We introduce the dimensionless viscosity coefficients
\begin{equation}\label{nondmulambda}
    \tilde{\mu} :=\frac{\mathrm{i}\omega \rho}{\rho_b \kappa} \mu, \qquad \tilde \lambda :=\frac{\mathrm{i}\omega \rho}{\rho_b \kappa} \lambda,
\end{equation}
and define the rescaled Cauchy tensor $\widetilde{\mathbf{T}} (p',\mathbf{v}') := - p' \mathbf{I} + 2\tilde{\mu}\nabla_{x'}^s \mathbf{v}'  + \tilde{\lambda} (\nabla_{x'} \cdot \mathbf{v}') \mathbf{I}$. One has
\begin{equation}\label{rescaleT}
    \mathbf{T}(p,\mathbf{v}) = P \widetilde{\mathbf{T}} (p',\mathbf{v}').
\end{equation}

Let $|B'|^2$ be the squared norm of the second fundamental form of $\partial D'$. Then $|B'|^2 = a^2|B|^2$. Define
\begin{equation}
    \mathcal{H}'(f) : = - (\Delta_{\partial D'} + |B'|^2) f, \qquad \forall f\in C^{\infty}(\partial D').
\end{equation}
Then,
\begin{equation}\label{rescaleH}
    \mathcal{H}(\mathbf{v}\cdot \mathbf{n})  = \frac{V}{a^2} \mathcal{H}'(\mathbf{v}'\cdot \mathbf{n}) .
\end{equation}

Substituting \eqref{nondimunknown}, \eqref{rescaleT}, and \eqref{rescaleH} into problem \eqref{maineq} and dropping the primes, we get the dimensionless form:
\begin{equation}\label{ndmaineqnnn}\left\{
\begin{array}{@{}l@{}}
\begin{alignedat}{3}
    &\tau^2 p + \nabla\cdot \mathbf{v} =0, && \qquad k^2 \mathbf{v} = \nabla p && \quad\mathrm{in}\ D , \\
    &\delta p + \nabla\cdot \mathbf{v} =0, && \qquad k^2 \mathbf{v} + \delta\nabla\cdot \widetilde{\mathbf{T}} (p,\mathbf{v}) =0 &&\quad\mathrm{in}\ \mathbb{R}^3 \setminus \overline{D} , \\
    & \mathbf{v}\cdot \mathbf{n}|_- = \mathbf{v}\cdot \mathbf{n}|_+, &&\qquad p \mathbf{n}|_-+ \widetilde{\mathbf{T}}(p,\mathbf{v})\mathbf{n}|_+  = \gamma \frac{k^2}{\delta} \mathcal{H}(\mathbf{v}\cdot \mathbf{n}) \mathbf{n}  && \quad\mathrm{on}\ \partial D, \\
\end{alignedat}\\[1mm]
     \mathbf{v} \textrm{ satisfies radiation condition }\eqref{radiationcond}  ,
    \end{array}\right.
\end{equation}
where the dimensionless parameters are defined by
\begin{equation}\label{nondtauk}
    \tau := \sqrt{\frac{\rho_b \kappa}{\rho \kappa_b}}, \qquad k:=\sqrt{\frac{\rho }{\kappa} } \omega a, \qquad \delta:= \frac{\rho_b}{\rho}, \qquad \gamma:=\frac{\sigma }{\rho \omega^2 a^3},
\end{equation}
and the Cauchy tensor is defined by
\begin{equation}
    \widetilde{\mathbf{T}} (p,\mathbf{v})  := - p \mathbf{I} +2 \tilde\mu \nabla^s \mathbf{v} + \tilde \lambda (\nabla \cdot \mathbf{v}) \mathbf{I}.
\end{equation}

\subsection{Radiation conditions}\label{secradiationcind} Now we describe the radiation condition. We observe that, in $\mathbb{R}^3\setminus \overline{D}$, the solution $\mathbf{v}$ of \eqref{ndmaineqnnn} satisfies the Lam\'{e} equation
\begin{equation}
    \delta \tilde\mu \Delta \mathbf{v} + \big\{ 1 + \delta(\tilde\lambda + \tilde \mu)\big\} \nabla \nabla \cdot \mathbf{v} + k^2 \mathbf{v} =0.
\end{equation}
Therefore, we may define the dimensionless pressure wavenumber and the dimensionless shear wavenumber as follows
\begin{equation}\label{defkpks}
    k_p^2 :=  \frac{k^2}{1+\delta(\tilde\lambda +2\tilde\mu)}, \qquad k_s^2 :=  \frac{k^2}{\delta\tilde\mu}.
\end{equation}
Note that $\tilde \lambda$ and $\tilde \mu$ are complex numbers, as are $k_p^2$ and $k_s^2$. Throughout the paper, 
we fix $k_p$ and $k_s$ by requiring $\mathrm{Im}\,k_p \geq  0$ and $\mathrm{Im}\,k_s \geq 0$. Following \cite[(2.386)]{ammari2018mathematical}, we may decompose $\mathbf{v}$ into the pressure part and the shear part:
\begin{equation}
    \mathbf{v} = \mathbf{v}_p + \mathbf{v}_s,
\end{equation}
where $\mathbf{v}_p $ and $\mathbf{v}_s$ are given by
\begin{equation}\label{defvpvs}
    \mathbf{v}_p  = (k_s^2 - k_p^2)^{-1} (\Delta + k_s^2) \mathbf{v}, \qquad  \mathbf{v}_s  = (k_p^2 - k_s^2)^{-1} (\Delta + k_p^2) \mathbf{v}.
\end{equation}
Then $\mathbf{v}_s $ and $\mathbf{v}_p$ satisfy the equations
\begin{equation}\left\{
    \begin{aligned}
        &(\Delta + k_s^2)\mathbf{v}_s =0, \qquad \nabla \cdot \mathbf{v}_s =0, \\
        &(\Delta + k_p^2)\mathbf{v}_p =0, \qquad \nabla \times  \mathbf{v}_p =0.
    \end{aligned}\right.
\end{equation}
We impose the Sommerfeld radiation conditions on $\mathbf{v}_s $ and $\mathbf{v}_p$:
\begin{equation}\label{radiationcond}
\left\{
    \begin{aligned}
        &\partial_r \mathbf{v}_s(x)- \mathrm{i} k_s\mathbf{v}_s(x) = O(r^{-2}), \\
        &\partial_r \mathbf{v}_p(x)- \mathrm{i} k_p\mathbf{v}_p(x) = O(r^{-2}),
    \end{aligned}\right. \qquad \mathrm{as}  \ r=|x| \rightarrow \infty.
\end{equation}

\subsection{Minnaert-type resonance regime}

Throughout this paper, we use the notation $a\approx b$ if there is a universal constant $C>0$ such that $C^{-1}|a| < |b| <C|a|$.

To obtain a Minnaert-type resonance in the low-frequency regime, we assume that 
\begin{equation}\label{lowdensity}
    \tau\approx 1, \qquad k\ll 1, \qquad \delta \ll 1.
\end{equation}
This assumption is natural in a bubble-fluid system; see \cite{ammari2018minnaert}. We further assume that
\begin{equation}\label{regimelongwave}
    k\approx k_p \ll k_s \approx 1, \qquad \gamma \approx 1.
\end{equation}
The regime \eqref{regimelongwave} is physically reasonable in the following sense:
\begin{itemize}
    \item[(a)] The condition $k_s \approx 1$ means that the shear-viscous diffusion length
    $|k_s|^{-1}$ is comparable to the characteristic size of the bubble.
 Hence, the viscous shear response interacts with the exterior flow on the scale of the bubble. Physically, this is the regime in which viscous damping is no longer a small perturbation of the radial oscillation and therefore produces a leading-order correction to the inviscid Minnaert resonance. Indeed, the classical viscous damping factor \cite[(91)]{review-bubbles} is given by
\begin{equation}
\beta_{\rm vis}=\frac{2\mu}{\rho a^2},
\end{equation}
and hence
\begin{equation}
\frac{\beta_{\rm vis}}{\omega}
=
\frac{2\mu}{\rho a^2\omega}
\approx \frac{1}{|k_s|^2} \approx 1.
\end{equation}
Thus, the viscous damping is of the same order as the oscillation frequency. This is consistent with the physical literature: Khismatullin \cite{khismatullin2004resonance} showed that the resonance frequency of microbubbles can deviate significantly from the undamped Minnaert frequency as the viscous damping increases;

    \item[(b)] The condition $k\approx k_p\ll 1$ ensures that the interior pressure field and the exterior compressional field are long waves of the same order compared to the scale
    of the bubble. This is precisely the scaling under which the classical Minnaert
    resonance arises from the balance between gas compressibility and the added
    inertia of the surrounding fluid;

    \item[(c)] The condition $k_p \ll k_s$ means that the shear-viscous length scale
    $|k_s|^{-1}$ is much shorter than the exterior compressional wavelength $|k_p|^{-1}$.
    Consequently, the leading resonant mechanism remains the compressional monopole
    Minnaert resonance. This scale separation is standard in bubble acoustics (see, for instance, \cite[below Eq.~(10)]{review-bubbles}). In particular, it is naturally satisfied for ordinary liquids, such as water or blood plasma, at acoustic and ultrasonic frequencies;

    \item[(d)] The condition $\gamma \approx 1$ allows the surface tension to enter the leading-order correction of the classical Minnaert resonance. This can be seen from \eqref{nondfinal1} below: to balance the dynamic and kinematic boundary conditions, we should require that $k_s^2 \gamma\approx 1$. Since we have assumed that $k_s \approx 1$, $\gamma \approx 1$.
\end{itemize}

\section{Analysis of the Minnaert-type resonances} \label{sec:2}

\subsection{Reduction to boundary integral equations}

For convenience in analysis, we may give an equivalent form of problem \eqref{ndmaineqnnn}. Let
\begin{equation}\label{definitionq}
    q:= \frac{1}{\tilde\mu} (p- \tilde \lambda \nabla \cdot \mathbf{v}) \qquad \mathrm{in} \ \mathbb{R}^3 \setminus \overline{D}.
\end{equation}
Then, \eqref{ndmaineqnnn} is equivalent to
\begin{equation}\label{nondfinal1}
    \left\{
\begin{aligned}
    &\Delta p + k^2\tau^2p = 0 && \mathrm{in}\ D , \\
    & \nabla \cdot (2\nabla^s \mathbf{v} - q \mathbf{I}) + k_s^2\mathbf{v}=0, \quad \nabla \cdot \mathbf{v} = \frac{k_p^2}{2k_p^2 - k_s^2} q  &&\mathrm{in}\ \mathbb{R}^3 \setminus \overline{D} , \\
    &  \mathbf{v}\cdot \mathbf{n}|_+ =\left.\frac{1}{k^2}\frac{\partial p}{\partial \mathbf{n}}\right|_-  && \mathrm{on}\ \partial D, \\
    & (2\nabla^s \mathbf{v} - q \mathbf{I})\mathbf{n} \big|_+ = k_s^2 \left[ \gamma \mathcal{H}\left(\frac{1}{k^2}\left. \frac{\partial p}{\partial \mathbf{n}}\right|_- \right) -\frac{\delta}{k^2}  p\right] \mathbf{n} \big|_-   && \mathrm{on}\ \partial D, \\
    &  \mathbf{v} \textrm{ satisfies radiation condition }\eqref{radiationcond} ,\\
    \end{aligned}\right.
\end{equation}
where $p$ is defined on $D$, and $(q,\mathbf{v})$ is defined on $\mathbb{R}^3\setminus \overline{D}$. We now reduce problem \eqref{nondfinal1} to a boundary integral equation. To this end, we define the normal hydrodynamic Neumann-to-Dirichlet (nhNtD) operator as follows.

\begin{definition}
    The normal hydrodynamic Neumann-to-Dirichlet (nhNtD) operator $\mathcal{N}_{k_p,k_s}$ is defined by
    \begin{equation}
        \mathcal{N}_{k_p,k_s}: g\mapsto \mathbf{v}\cdot \mathbf{n},
    \end{equation}
where $\mathbf{v}$, combined with $q$, is the solution of 
\begin{equation}\label{uindefDtN}
    \left\{
    \begin{aligned}
        & \nabla \cdot (2\nabla^s \mathbf{v} - q \mathbf{I}) + k_s^2\mathbf{v}=0, \quad \nabla \cdot \mathbf{v} = \frac{k_p^2}{2k_p^2 - k_s^2} q  &&\mathrm{in}\ \mathbb{R}^3 \setminus \overline{D} , \\
    & (2\nabla^s \mathbf{v} - q \mathbf{I})\mathbf{n} \big|_+ = g\mathbf{n}  && \mathrm{on}\ \partial D, \\
        & \mathbf{v} \textrm{ satisfies radiation condition }\eqref{radiationcond} .
    \end{aligned}\right.
\end{equation}
\end{definition}

For $\mathrm{Im}\,k_p >0$ and $\mathrm{Im}\,k_s >0$, the solution $\mathbf{v}$ to \eqref{uindefDtN} decays exponentially at infinity. This, combined with Green's identity, implies the uniqueness of a solution to \eqref{uindefDtN}. For $\mathrm{Im}\,k_p=0$ or $\mathrm{Im}\,k_s=0$, the uniqueness of a solution to \eqref{uindefDtN} can also be obtained in the regime $k_p\ll k_s$ using the limiting absorption principle. However, it is not yet clear whether problem \eqref{uindefDtN} is solvable for general data $g$. In the following, we will give a layer potential representation for the solution to \eqref{uindefDtN}, which yields the well-posedness of problem \eqref{uindefDtN}.

Since we are interested in the regime $k_p \ll 1$, we also define the incompressible nhNtD operator $\mathcal{N}_{0,k_s} $ as follows.

\begin{definition}\label{defnhNtD}
    The incompressible nhNtD operator $\mathcal{N}_{0,k_s}$ is defined by
    \begin{equation}
        \mathcal{N}_{0,k_s}: g\mapsto \mathbf{v}\cdot \mathbf{n},
    \end{equation}
where $\mathbf{v}$, combined with $q$, is the solution of 
\begin{equation}\label{uindefDtN1incompres}
    \left\{
    \begin{aligned}
        & \nabla \cdot (2\nabla^s \mathbf{v} - q \mathbf{I}) + k_s^2\mathbf{v}=0, \quad \nabla \cdot \mathbf{v} = 0  &&\mathrm{in}\ \mathbb{R}^3 \setminus \overline{D} , \\
    & (2\nabla^s \mathbf{v} - q \mathbf{I})\mathbf{n} \big|_+ = g\mathbf{n}   && \mathrm{on}\ \partial D, \\
        & \mathbf{v} \textrm{ satisfies radiation condition }\eqref{radiationlimit} .
    \end{aligned}\right.
\end{equation}
\end{definition}

The radiation condition in \eqref{uindefDtN1incompres} is as follows. Let
\begin{equation}\label{decompvpvs}
    \mathbf{v}_p := \frac{1}{k_s^2} \nabla q, \qquad \mathbf{v}_s := \mathbf{v} - \mathbf{v}_p.
\end{equation}
Using \eqref{definitionq}, it is clear that $\mathbf{v}_p$ and $\mathbf{v}_s$ defined above are consistent with \eqref{defvpvs}. Moreover, since $\Delta q =0$ in $\mathbb{R}^3\setminus\overline{D}$, we impose the radiation condition
\begin{equation}\label{radiationlimit}
\left\{
\begin{aligned}
    & q(x) = O(r^{-1}),  \\
    & \partial_r \mathbf{v}_s(x)- \mathrm{i} k_s\mathbf{v}_s(x) = O(r^{-2}),
\end{aligned}\right. \qquad \mathrm{as} \ r=|x| \rightarrow \infty.
\end{equation}

\begin{lemma}\label{uniquekpzero}
    For $\mathrm{Im}\,k_s>0$ and $g=0$, the weak solution to \eqref{uindefDtN1incompres} is zero.
\end{lemma}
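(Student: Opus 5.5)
The plan is an energy identity on the exterior domain $\Omega:=\mathbb{R}^3\setminus\overline{D}$, made rigorous by first establishing decay at infinity. Since $\nabla\cdot\mathbf{v}=0$ and $\Delta q=0$ in $\Omega$, we have $\nabla\cdot(2\nabla^s\mathbf{v})=\Delta\mathbf{v}$. The equation then reads $\Delta\mathbf{v}+k_s^2\mathbf{v}=\nabla q$. Use the decomposition \eqref{decompvpvs}. Because $q=O(r^{-1})$ is harmonic at infinity, it has a multipole expansion, so $\nabla q=O(r^{-2})$ together with all its derivatives. This gives $\mathbf{v}_p=k_s^{-2}\nabla q=O(r^{-2})$.

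For the shear part, note that $\mathbf{v}_s$ satisfies $(\Delta+k_s^2)\mathbf{v}_s=0$ in $\Omega$ (indeed $\Delta \mathbf{v}_p=0$ and $(\Delta+k_s^2)\mathbf{v}=\nabla q=k_s^2\mathbf{v}_p$). It also satisfies the Sommerfeld condition in \eqref{radiationlimit}. Applying the Helmholtz representation formula on a large ball with $\mathrm{Im}\,k_s>0$ therefore shows that $\mathbf{v}_s$ and $\nabla\mathbf{v}_s$ decay like $e^{-\mathrm{Im}\,k_s\, r}/r$. Consequently
\[
\mathbf{v}=O(r^{-2}),\qquad \nabla\mathbf{v}=O(r^{-3}),\qquad q=O(r^{-1})
\]
as $r\to\infty$. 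This decay step is where I expect the main care to be needed. The weak-solution setting must be reconciled with the classical representation, and I would handle this by interior elliptic regularity of the Stokes-type system away from $\partial D$.

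Next, multiply $\nabla\cdot(2\nabla^s\mathbf{v}-q\mathbf{I})+k_s^2\mathbf{v}=0$ by $\overline{\mathbf{v}}$ and integrate over $\Omega_R:=\Omega\cap B_R$. Integration by parts uses $(2\nabla^s\mathbf{v}-q\mathbf{I}):\nabla\overline{\mathbf{v}}=2|\nabla^s\mathbf{v}|^2-q\,\overline{\nabla\cdot\mathbf{v}}=2|\nabla^s\mathbf{v}|^2$, which follows from incompressibility. The result is
\[
-\int_{\partial D}(2\nabla^s\mathbf{v}-q\mathbf{I})\mathbf{n}\cdot\overline{\mathbf{v}}\,ds+\int_{\partial B_R}(2\nabla^s\mathbf{v}-q\mathbf{I})\boldsymbol{\nu}\cdot\overline{\mathbf{v}}\,ds-2\int_{\Omega_R}|\nabla^s\mathbf{v}|^2+k_s^2\int_{\Omega_R}|\mathbf{v}|^2=0.
\]
The $\partial D$ term vanishes because $g=0$. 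By the decay above, the $\partial B_R$ integrand is $O(r^{-3}\cdot r^{-2})$, so that term is $O(R^{-3})\to0$. In the weak formulation this identity is simply the test with $\overline{\mathbf{v}}$, justified by a cutoff argument using the same decay.

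Letting $R\to\infty$ gives $2\|\nabla^s\mathbf{v}\|^2_{L^2(\Omega)}=k_s^2\|\mathbf{v}\|^2_{L^2(\Omega)}$, where $\mathbf{v}\in L^2(\Omega)$ since $\mathbf{v}=O(r^{-2})$. The left side is real and nonnegative. Taking imaginary parts yields $\mathrm{Im}(k_s^2)\|\mathbf{v}\|^2=0$. Since $\mathrm{Im}\,k_s>0$, $k_s^2$ is real only if $k_s$ is purely imaginary, and then $k_s^2<0$. So in either case $\mathbf{v}=0$: if $\mathrm{Im}(k_s^2)\neq0$ the imaginary part forces it, and if $k_s^2<0$ the identity forces $\|\mathbf{v}\|=0$ directly.

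Finally, $\mathbf{v}=0$ gives $\nabla q=k_s^2\mathbf{v}$ from the equation, so $\nabla q=0$ and $q$ is constant. Since $q=O(r^{-1})$, we get $q=0$, which completes the proof.
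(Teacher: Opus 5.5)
Your proof is correct and takes essentially the same route as the paper: an energy identity on $B_R\setminus\overline{D}$; the splitting $\mathbf{v}=\mathbf{v}_s+k_s^{-2}\nabla q$, where exponential decay of $\mathbf{v}_s$ and multipole decay of the harmonic $q$ make the $\partial B_R$ term vanish; then $k_s^2\notin[0,\infty)$ forces $\mathbf{v}=0$, and hence $q=0$. One small slip: because $q=O(r^{-1})$, the $q\,\boldsymbol{\nu}\cdot\overline{\mathbf{v}}$ part of the $\partial B_R$ integrand is only $O(r^{-3})$, so that boundary term is $O(R^{-1})$ rather than $O(R^{-3})$, which still tends to zero.
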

\begin{proof}
    Assume that $(\mathbf{v},q)$ is a weak solution to \eqref{uindefDtN1incompres}. Multiplying $\nabla \cdot (2\nabla^s \mathbf{v} - q \mathbf{I}) + k_s^2\mathbf{v}=0$ with $\overline{\mathbf{v}}$ and integrating over $B_R$, one gets after using \eqref{uindefDtN1incompres} that
\begin{equation}\label{identitylimit1}
    \int_{B_R\setminus \overline{D}} 2|\nabla^s \mathbf{v}|^2 - k_s^2 |\mathbf{v}|^2 \,dx = \int_{\partial B_R} (2\nabla^s \mathbf{v} - q \mathbf{I}) \mathbf{n} \cdot \overline{\mathbf{v}}\,dS .
\end{equation}
Using the decomposition \eqref{decompvpvs}, we have
\begin{equation}\label{infinityestimate1}
    \begin{aligned}
       \int_{\partial B_R} (2\nabla^s \mathbf{v} - q \mathbf{I}) \mathbf{n} \cdot \overline{\mathbf{v}}\,dS & =   \int_{\partial B_R} (2\nabla^s \mathbf{v} - q \mathbf{I}) \mathbf{n} \cdot \overline{\mathbf{v}_s}\,dS +  \frac{1}{\overline{k_s^2}}\int_{\partial B_R} (2\nabla^s \mathbf{v}_s ) \mathbf{n} \cdot \overline{\nabla q}\,dS  \\
       & \qquad +   \frac{1}{\overline{k_s^2}}\int_{\partial B_R} \left(\frac{2}{k_s^2}\nabla^s \nabla q - q \mathbf{I} \right) \mathbf{n} \cdot \overline{\nabla q}\,dS .
    \end{aligned}
\end{equation}
As $R \rightarrow \infty$, the first and second terms of the right-hand side of \eqref{infinityestimate1} vanish because $\mathbf{v}_s$ decays exponentially and $q$ decays polynomially. For the third term, since $q$ is harmonic in $\mathbb{R}^3\setminus \overline{D}$ and $q(x) = O(r^{-1})$ as $r=|x|\rightarrow \infty$, by the interior estimates of harmonic functions, one gets, for $m\in \mathbb{N}$, that
\begin{equation}
    \nabla^m q(x) = O(r^{-m-1}) \qquad \mathrm{as} \ r=|x|\rightarrow \infty.
\end{equation}
As a consequence, the third term of the right-hand side of \eqref{infinityestimate1} vanishes as $R \rightarrow \infty$. Therefore, by \eqref{identitylimit1}, we get
\begin{equation}
    \int_{\mathbb{R}^3\setminus \overline{D}} 2|\nabla^s \mathbf{v}|^2 - k_s^2 |\mathbf{v}|^2 \,dx =0.
\end{equation}
It follows from $\mathrm{Im}\,k_s>0$ that $k_s^2 \in \mathbb{C}\setminus [0,\infty)$. We conclude that $\mathbf{v} \equiv 0$. Hence, $q=0$.
\end{proof}

For simplicity, we assume that $\mathrm{Im}\,k_s >0$ holds throughout this paper. Note that, by the physical parameterization
\begin{equation}
    k_s^2 = \frac{\rho a^2\omega }{\mathrm{i}\mu},
\end{equation}
the condition $\mathrm{Im}\,k_s >0$ excludes purely imaginary frequencies $\omega\in \mathrm{i}[0,\infty)$. In particular, it is satisfied for the oscillatory Minnaert poles with $\mathrm{Re}\,\omega \neq 0$. Then, by Lemma \ref{uniquekpzero}, we get the uniqueness of a solution to \eqref{uindefDtN1incompres}. Similarly, it is not clear whether problem \eqref{uindefDtN1incompres} is solvable for general data $g$. We now give layer potential representations for the solutions to \eqref{uindefDtN} and \eqref{uindefDtN1incompres}, which yield the well-posedness of problems \eqref{uindefDtN} and \eqref{uindefDtN1incompres}. For basic properties of hydrodynamic layer potentials, we refer to Appendix \ref{appenlayerpotential}.

Fix $x_0 \in D$. For any $\mathrm{Im}\,k_p\geq 0$ and $\mathrm{Im}\,k_s>0$, we define the monopole mode
\begin{equation}
\left\{
\begin{aligned}
    & \mathbf{v}^*_{k_p,k_s}(x) := \frac{1}{k_s^2 - 2 k_p^2} \nabla G^{k_p}(x-x_0) , \\
    & q^*_{k_p,k_s}(x) := G^{k_p}(x-x_0) ,
\end{aligned}\right. \qquad \forall x\in \mathbb{R}^3\setminus \overline{D}.
\end{equation}
% and
% \begin{equation}
% \left\{
% \begin{aligned}
%     & \mathbf{v}^*_{0,k_s}(x) := \frac{1}{k_s^2 } \nabla G(x-x_0) , \\
%     & q^*_{0,k_s}(x) := G(x-x_0) ,
% \end{aligned}\right. \qquad \forall x\in \mathbb{R}^3\setminus \overline{D}.
% \end{equation}
For any $s\in \mathbb{R}$, define the operator 
\begin{equation}
    \mathcal{B}_{k_p,k_s}:H^s(\partial D; \mathbb{C}^3) \times \mathbb{C} \rightarrow H^s(\partial D; \mathbb{C}^3) \times \mathbb{C}
\end{equation}
by
\begin{equation} \label{def:b}
    (\bm{\varphi},c) \mapsto \left( \left( \frac{1}{2} \mathbf{I} + \mathbf{K}^{k_p,k_s,*}\right) \bm{\varphi} + c(2 \nabla^s \mathbf{v}^*_{k_p,k_s} - q^*_{k_p,k_s} \mathbf{I})\mathbf{n} \ , \ \int_{\partial D} \bm{\varphi}\cdot \mathbf{n}\,dS \right).
\end{equation}
% and the operator 
% $\mathcal{B}_{0,k_s}:L^2(\partial D; \mathbb{C}^3) \times \mathbb{C} \rightarrow L^2(\partial D; \mathbb{C}^3) \times \mathbb{C}$ by mapping $(\bm{\varphi},c)$ to
% \begin{equation}
%     \left( \left( \frac{1}{2} \mathbf{I} + \mathbf{K}^{0,k_s,*}\right) \bm{\varphi} + c(2 \nabla^s \mathbf{v}^*_{0,k_s} - q^*_{0,k_s} \mathbf{I})\mathbf{n} \ , \ \int_{\partial D} \bm{\varphi}\cdot \mathbf{n}\,dS \right).
% \end{equation}

\begin{lemma}\label{lemma34Bkpks}
    For any $s\in \mathbb{R}$, $\mathrm{Im}\,k_s>0$, and sufficiently small $k_p$, $\mathcal{B}_{k_p,k_s}$ is an invertible operator-valued analytic family of $k_p$. Moreover,
    \begin{equation}\label{continuitB}
        \mathcal{B}_{k_p,k_s}  = \mathcal{B}_{0,k_s}  + \mathcal{O}_{H^s \times \mathbb{C} \rightarrow H^s \times \mathbb{C} }(k_p).
    \end{equation}
\end{lemma}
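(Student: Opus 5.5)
We prove analyticity and the expansion \eqref{continuitB} first, and then reduce invertibility to the case $k_p=0$ by a perturbation argument.

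\textbf{Analyticity and \eqref{continuitB}.} The hydrodynamic fundamental solution with wavenumbers $(k_p,k_s)$ can be written, following the Helmholtz-type decomposition in Appendix \ref{appenlayerpotential}, as a combination of $G^{k_s}$ and of $\nabla\nabla$ applied to $(G^{k_p}-G^{k_s})/(k_s^2-2k_p^2)$-type expressions. Each of these is an entire function of $k_p$: the map $k_p\mapsto G^{k_p}(x)=-e^{\mathrm{i}k_p|x|}/(4\pi|x|)$ is entire, and its Taylor coefficients $|x|^{j-1}$ are smoother kernels than the leading term. The leading singular part is independent of $k_p$, namely the Stokes–Brinkman/Stokeslet part for $k_s$. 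Hence $\mathbf{K}^{k_p,k_s,*}-\mathbf{K}^{0,k_s,*}$ has a kernel that is analytic in $k_p$ near $0$, with value $O(k_p)$ in $C^m$ norms off the diagonal and at most weakly singular (with derivatives) on the diagonal. It therefore maps $H^s\to H^s$ with norm $O(k_p)$ for every $s$.

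Likewise, since $x_0\in D$ is at positive distance from $\partial D$, the term $(2\nabla^s\mathbf{v}^*_{k_p,k_s}-q^*_{k_p,k_s}\mathbf{I})\mathbf{n}$ is a smooth function on $\partial D$ depending analytically on $k_p$. Because $k_s^2\neq0$, the prefactor $1/(k_s^2-2k_p^2)$ is analytic near $k_p=0$. This term therefore differs from its $k_p=0$ value by $O(k_p)$ in every $H^s$. The second component of $\mathcal{B}_{k_p,k_s}$ does not depend on $k_p$. Together these facts give the analytic family and \eqref{continuitB}.

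\textbf{Invertibility at $k_p=0$.} By a Neumann series, invertibility for small $k_p$ follows from invertibility of $\mathcal{B}_{0,k_s}$.

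First, $\mathcal{B}_{0,k_s}$ is Fredholm of index zero. The operator $\frac12\mathbf{I}+\mathbf{K}^{0,k_s,*}$ is a compact perturbation of the static Stokes operator $\frac12\mathbf{I}+\mathbf{K}^{*}$, since the difference kernel is smoother. On smooth $\partial D$, $\mathbf{K}^*$ is itself a compact pseudodifferential operator of order $-1$, so $\frac12\mathbf{I}+\mathbf{K}^{0,k_s,*}$ is Fredholm of index zero on $H^s$. Adding the finite-rank pieces (the term in $c$ and the functional $\int\bm{\varphi}\cdot\mathbf{n}$) preserves index zero.

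It then suffices to prove injectivity. Suppose $\mathcal{B}_{0,k_s}(\bm{\varphi},c)=0$. Set $\mathbf{v}=\mathbf{S}^{0,k_s}[\bm{\varphi}]+c\,\mathbf{v}^*_{0,k_s}$, with the associated pressure. By the jump relation, $(\mathbf{v},q)$ solves \eqref{uindefDtN1incompres} with $g=0$ and satisfies the radiation condition \eqref{radiationlimit}. Lemma \ref{uniquekpzero} then gives $\mathbf{v}=0$ and $q=0$ outside $D$.

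\textbf{Main obstacle.} The difficulty is to recover $\bm{\varphi}=0$ and $c=0$ from this exterior vanishing. The single layer is continuous across $\partial D$, so the interior field $\mathbf{u}=\mathbf{S}^{0,k_s}[\bm{\varphi}]$ in $D$ has Dirichlet data $-c\,\mathbf{v}^*_{0,k_s}|_{\partial D}$. Here $\mathbf{v}^*_{0,k_s}$ is a gradient of a harmonic function singular at $x_0$. The jump of the conormal derivative equals $\bm{\varphi}$.

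To conclude, integrate the divergence-free condition. The incompressible single layer has $\nabla\cdot\mathbf{u}=0$ in $D$, so
\begin{equation}
0=\int_{\partial D}\mathbf{u}\cdot\mathbf{n}\,dS=-c\int_{\partial D}\mathbf{v}^*_{0,k_s}\cdot\mathbf{n}\,dS=-\frac{c}{k_s^2}\int_{\partial D}\partial_{\mathbf{n}}G(x-x_0)\,dS=-\frac{c}{k_s^2}.
\end{equation}
Hence $c=0$. With $c=0$, $\mathbf{u}$ solves the interior Brinkman Dirichlet problem with zero data. This problem is uniquely solvable when $k_s^2\notin[0,\infty)$, by the same energy identity as in Lemma \ref{uniquekpzero}. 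So $\mathbf{u}=0$ in $D$, the pressure is determined up to a constant, and the conormal jump gives $\bm{\varphi}=\alpha\mathbf{n}$ for some constant $\alpha$ (this is the known Stokes-type kernel). Finally, the condition $\int\bm{\varphi}\cdot\mathbf{n}=0$ forces $\alpha|\partial D|=0$, so $\alpha=0$.

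The delicate points are to verify that the interior pressure constant produces exactly the one-dimensional kernel spanned by $\mathbf{n}$, and to check the jump relations and sign conventions in Appendix \ref{appenlayerpotential}.
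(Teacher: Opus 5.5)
Your proposal is correct and follows the paper's proof essentially step by step. The shared steps are: Fredholm index zero from the order $-1$ Neumann--Poincar\'e operator plus finite-rank terms, the expansion in $k_p$ from the kernel expansion, exterior vanishing via Lemma~\ref{uniquekpzero}, the interior energy identity, a constant interior pressure, and $\int_{\partial D}\bm\varphi\cdot\mathbf{n}\,dS=0$. The only deviation is how you get $c=0$: the paper compares decay at infinity ($\mathbf{S}^{0,k_s}[\bm\varphi]=O(r^{-3})$, while $\mathbf{v}^*_{0,k_s}$ decays only like $r^{-2}$), whereas you note that the divergence-free single layer has zero flux through $\partial D$ while the monopole has flux $1/k_s^2$, which is an equally valid way to detect the monopole component.
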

\begin{proof}
    By Lemma \ref{lemmaKmaping}, for any $s\in \mathbb{R}$ and sufficiently small $k_p$,
    \begin{equation}
        \frac{1}{2} \mathbf{I} + \mathbf{K}^{k_p,k_s,*} : H^s(\partial D; \mathbb{C}^3)  \rightarrow H^s(\partial D; \mathbb{C}^3) 
    \end{equation}
    is an analytic family of Fredholm operators of index zero. Since a finite-rank perturbation does not change the Fredholm index, $\mathcal{B}_{k_p,k_s}$ is an analytic family of Fredholm operators of index zero. Expansion \eqref{continuitB} is an immediate corollary of Lemmas \ref{lemmaC1} and  \ref{lemmaKmaping}. To complete the proof, it remains to prove that $\mathcal{B}_{0,k_s} $ is injective. In the following, we only consider the case $s\geq 0$, since the general case $s\in \mathbb{R}$ can be obtained by duality arguments.
    
    Assume that $\mathcal{B}_{0,k_s}(\bm{\varphi},c)=0$ for $(\bm{\varphi},c) \in H^s(\partial D; \mathbb{C}^3) \times \mathbb{C} $. Define
    \begin{equation}\label{defvqexterior}
    \left\{
    \begin{aligned}
        & \mathbf{v} (x)=  \mathbf{S}^{0,k_s}[\bm\varphi](x) + c\mathbf{v}^*_{0,k_s}(x), \\
        & q(x)= Q^{0,k_s}[\bm\varphi](x) + cq^*_{0,k_s}(x), 
    \end{aligned}\right. \qquad \forall x\in \mathbb{R}^3\setminus \overline{D}.
    \end{equation}
    It is straightforward to check that $(q,\mathbf{v})$ satisfies \eqref{uindefDtN1incompres} with $g=0$. By Lemma \ref{uniquekpzero}, $(q,\mathbf{v})=(0,0)$ in $\mathbb{R}^3\setminus\overline{D}$. Since 
    \begin{equation}
        \mathbf{S}^{0,k_s}[\bm\varphi](x) = O(r^{-3}), \qquad \mathbf{v}^*_{0,k_s}(x) = O(r^{-2}) \qquad \mathrm{as} \ r=|x|\rightarrow \infty,
    \end{equation}
    we get $c=0$. Therefore, $(q,\mathbf{v})$ defined in \eqref{defvqexterior} can be extended to $D$ by setting
    \begin{equation}
        \mathbf{v} =  \mathbf{S}^{0,k_s}[\bm\varphi], \qquad q= Q^{0,k_s}[\bm\varphi] \qquad \mathrm{in} \ D.
    \end{equation}
    Since the single-layer potential $\mathbf{S}^{0,k_s}$ is continuous across the boundary $\partial D$, we get
    \begin{equation}
    \left\{
    \begin{aligned}
        &\nabla \cdot (2\nabla^s \mathbf{v} - q \mathbf{I}) + k_s^2\mathbf{v}=0, \quad \nabla \cdot \mathbf{v} = 0 && \mathrm{in}\ D, \\
        & \mathbf{v} =0 && \mathrm{on} \ \partial D.
    \end{aligned}\right.
    \end{equation}
    Multiplying $\nabla \cdot (2\nabla^s \mathbf{v} - q \mathbf{I}) + k_s^2\mathbf{v}=0$ with $\overline{\mathbf{v}}$ and integrating over $D$, using the fact that $\mathrm{Im}\,k_s>0$ again, we conclude that $\mathbf{v} \equiv 0$ in $D$. So, $q = C$ in $D$ for some constant $C$. Using the jump formula for $\mathbf{S}^{0,k_s}$, we get
    \begin{equation}
        C \mathbf{n} =\Big(2\nabla^s \mathbf{v} - q\mathbf{I} \Big) \mathbf{n} \big|_+ - \Big(2\nabla^s \mathbf{v}  - q\mathbf{I} \Big) \mathbf{n} \big|_- = \bm\varphi .
    \end{equation}
    Since we have $\int_{\partial D} \bm\varphi \cdot \mathbf{n}\,dS =0$, we get $C=0$. Therefore, $\bm\varphi =0$. The proof is complete.
\end{proof}

\begin{lemma}\label{lemma35vkpks}
    For any $s\in \mathbb{R}$, $\mathrm{Im}\,k_s>0$, sufficiently small $k_p$, and any $g \in H^s(\partial D)$, there exists a unique distributional solution $\mathbf{v}_{k_p,k_s} $ and $\mathbf{v}_{0,k_s} $ to \eqref{uindefDtN} and \eqref{uindefDtN1incompres}, respectively. Moreover, the layer potential representation
    \begin{equation}
        \mathbf{v}_{k_p,k_s} = \mathbf{S}^{k_p,k_s}[\bm\varphi] + c\mathbf{v}^*_{k_p,k_s}
    \end{equation}
    holds for both $\mathrm{Im}\,k_p>0$ and $k_p =0$, where
    \begin{equation}
        (\bm\varphi,c) = (\mathcal{B}_{k_p,k_s})^{-1} (g\mathbf{n},0).
    \end{equation}
\end{lemma}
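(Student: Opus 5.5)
Existence follows from Lemma \ref{lemma34Bkpks}, and uniqueness from Lemma \ref{uniquekpzero} and its compressible analogue. Fix $g\in H^s(\partial D)$ and set $(\bm\varphi,c):=(\mathcal{B}_{k_p,k_s})^{-1}(g\mathbf{n},0)$. This is well defined for sufficiently small $k_p$, including $k_p=0$, since Lemma \ref{lemma34Bkpks} gives invertibility of $\mathcal{B}_{k_p,k_s}$ on $H^s(\partial D;\mathbb{C}^3)\times\mathbb{C}$. Define
\begin{equation}
\mathbf{v}:=\mathbf{S}^{k_p,k_s}[\bm\varphi]+c\,\mathbf{v}^*_{k_p,k_s},\qquad q:=Q^{k_p,k_s}[\bm\varphi]+c\,q^*_{k_p,k_s}\qquad \mathrm{in}\ \mathbb{R}^3\setminus\overline{D}.
\end{equation}

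The first step is to check that $(q,\mathbf{v})$ solves \eqref{uindefDtN}, or \eqref{uindefDtN1incompres} when $k_p=0$.

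\emph{Interior equations.} By the appendix properties of the hydrodynamic layer potentials, $(\mathbf{S}^{k_p,k_s}[\bm\varphi],Q^{k_p,k_s}[\bm\varphi])$ satisfies the system and the divergence relation $\nabla\cdot\mathbf{v}=\frac{k_p^2}{2k_p^2-k_s^2}q$ away from $\partial D$. The monopole pair does as well: with $\mathbf{v}^*=(k_s^2-2k_p^2)^{-1}\nabla G^{k_p}$ and $q^*=G^{k_p}$, we have $\nabla\cdot\mathbf{v}^*=-k_p^2(k_s^2-2k_p^2)^{-1}G^{k_p}$, which is the required relation. Moreover $\nabla\cdot(2\nabla^s\mathbf{v}^*-q^*\mathbf{I})+k_s^2\mathbf{v}^*$ reduces to a multiple of $\nabla(\Delta+k_p^2)G^{k_p}=0$ for $x\neq x_0$; this is a direct computation.

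\emph{Radiation condition.} This is inherited from the fundamental solutions; for $k_p=0$ one uses \eqref{radiationlimit}, since $q^*=G=O(r^{-1})$.

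\emph{Boundary condition.} The exterior jump relation gives $(2\nabla^s\mathbf{S}[\bm\varphi]-Q[\bm\varphi]\mathbf{I})\mathbf{n}|_+=(\frac12\mathbf{I}+\mathbf{K}^{k_p,k_s,*})\bm\varphi$. Hence the first component of $\mathcal{B}_{k_p,k_s}(\bm\varphi,c)=(g\mathbf{n},0)$ is exactly the traction condition. The second component $\int_{\partial D}\bm\varphi\cdot\mathbf{n}=0$ is an auxiliary normalization and is not needed for solvability itself. The regularity statement, that $\mathbf{v}$ is a distributional solution for $g\in H^s$ with $s$ negative, follows from the mapping properties of $\mathbf{S}^{k_p,k_s}$ on $H^s$, which give traces in the sense of distributions.

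The second step is uniqueness. For $k_p=0$ it is Lemma \ref{uniquekpzero}, applied to the difference of two solutions. The obstacle here is that weak solutions are needed while $g$ is only in $H^s$; I would handle this by interior elliptic regularity away from $\partial D$ together with the representation, or by a duality argument as in Lemma \ref{lemma34Bkpks}.

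For $\mathrm{Im}\,k_p>0$ I would mimic the energy identity \eqref{identitylimit1}. Both $\mathbf{v}_p$ and $\mathbf{v}_s$ decay exponentially, so the boundary term at $\partial B_R$ vanishes. Write $\nabla\cdot\mathbf{v}=\beta q$ with $\beta=\frac{k_p^2}{2k_p^2-k_s^2}$, and substitute $q=\beta^{-1}\nabla\cdot\mathbf{v}$. This yields
\begin{equation}
\int 2|\nabla^s\mathbf{v}|^2-\beta^{-1}|\nabla\cdot\mathbf{v}|^2-k_s^2|\mathbf{v}|^2\,dx=0.
\end{equation}
This is the step I expect to be hardest: one must show that these complex coefficients cannot all conspire to annihilate the integral. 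I would try first to use a perturbation argument instead. For small $k_p$, a solution with $g=0$ has a representation of the above form (via the Fredholm structure), which corresponds to $(\bm\varphi,c)\in\ker\mathcal{B}_{k_p,k_s}$. Such a kernel is trivial by Lemma \ref{lemma34Bkpks}, and so $\mathbf{v}=0$. To make this work, I would prove a representation theorem for arbitrary radiating solutions via Green's formula in $B_R\setminus\overline{D}$ and then use invertibility again.

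Finally, the identification $\mathbf{v}_{k_p,k_s}=\mathbf{S}^{k_p,k_s}[\bm\varphi]+c\mathbf{v}^*_{k_p,k_s}$ follows from uniqueness, since the constructed field is a solution.
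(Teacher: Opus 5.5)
Your existence argument is the paper's proof. You set $(\bm\varphi,c)=(\mathcal{B}_{k_p,k_s})^{-1}(g\mathbf{n},0)$ and check that the layer potential and the monopole satisfy the field equations and radiation conditions. By \eqref{jumpform}, the first component of $\mathcal{B}_{k_p,k_s}$ is then exactly the exterior traction. The paper's one-line proof (Lemma \ref{lemma34Bkpks} plus \eqref{jumpform}) says no more, and your use of Lemma \ref{uniquekpzero} for uniqueness at $k_p=0$ is also what the paper relies on.

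The gap is uniqueness for $\mathrm{Im}\,k_p>0$. Your fallback is circular. Green's formula represents a radiating solution by a single \emph{and} a double layer, not by $\mathbf{S}^{k_p,k_s}[\bm\varphi]+c\mathbf{v}^*_{k_p,k_s}$. Knowing that every solution with $g=0$ has that form with $\int_{\partial D}\bm\varphi\cdot\mathbf{n}\,dS=0$ is essentially the uniqueness statement itself.

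Your doubt about the energy identity is justified. For general $k_p,k_s$ in the upper half-plane, the coefficients $2$, $-\beta^{-1}=k_s^2/k_p^2-2$ and $-k_s^2$ need not lie in a common open half-plane, so the identity alone does not force $\mathbf{v}=0$. The paper only asserts this uniqueness (``exponential decay plus Green's identity'') in the remark after the definition of $\mathcal{N}_{k_p,k_s}$.

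The missing idea is to use $\mathcal{B}_{k_p,k_s}$ by duality rather than by representation:
\begin{itemize}
\item Let $\mathbf{v}$ be a (weak) solution of \eqref{uindefDtN} with $g=0$. For smooth $\mathbf{h}$, set $(\bm\varphi,c)=(\mathcal{B}_{k_p,k_s})^{-1}(\mathbf{h},0)$ and $\mathbf{w}=\mathbf{S}^{k_p,k_s}[\bm\varphi]+c\mathbf{v}^*_{k_p,k_s}$. By \eqref{jumpform}, the exterior traction of $\mathbf{w}$ is $\mathbf{h}$.
\item For both fields $q=\beta^{-1}\nabla\cdot(\cdot)$, by \eqref{funds} and your check on $\mathbf{v}^*$. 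Hence the form $\int 2\nabla^s\mathbf{v}:\nabla^s\mathbf{w}-\beta^{-1}(\nabla\cdot\mathbf{v})(\nabla\cdot\mathbf{w})-k_s^2\mathbf{v}\cdot\mathbf{w}\,dx$ is symmetric.
\item Both fields decay exponentially, so Betti's reciprocity on $B_R\setminus\overline{D}$ with $R\to\infty$ gives $\int_{\partial D}\mathbf{v}\cdot\mathbf{h}\,dS=0$ for all $\mathbf{h}$. Thus $\mathbf{v}|_{\partial D}=0$.
\item With vanishing Cauchy data, $\mathbf{v}$ extended by zero into $D$ is an $L^2$ distributional solution on $\mathbb{R}^3$. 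The symbol of the operator has eigenvalues $k_s^2-|\xi|^2$ and $\frac{k_s^2}{k_p^2}(k_p^2-|\xi|^2)$, which are nonzero for real $\xi$ since $k_s^2,k_p^2\notin[0,\infty)$. Hence $\mathbf{v}\equiv0$.
\end{itemize}
This closes the step using only the ingredients of the paper's proof: surjectivity of $\mathcal{B}_{k_p,k_s}$ and \eqref{jumpform}.
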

\begin{remark}
    For $s\leq -1$, the boundary trace of $\mathbf{v}_{k_p,k_s}$ should not be understood in the usual trace sense, as the usual trace theorem does not apply at this regularity. Instead, the trace is defined by the layer-potential trace operator 
    \begin{equation}
        \mathbf{S}^{k_p,k_s}\big|_+ : H^{s}(\partial D;\mathbb{C}^3) \rightarrow H^{s+1}(\partial D;\mathbb{C}^3), \qquad \bm\varphi \mapsto  \mathbf{S}^{k_p,k_s}[ \bm\varphi ]\big|_+.
    \end{equation}
    Similarly, the boundary traction of $\mathbf{v}_{k_p,k_s}$ should be defined by the Neumann-Poincar\'{e} operator 
    \begin{equation}
        \Big(2\nabla^s \mathbf{S}^{k_p,k_s} - Q^{k_p,k_s}  \mathbf{I} \Big) \mathbf{n} \big|_+ = \frac{1}{2} \mathbf{I} + \mathbf{K}^{k_p,k_s,*} : H^{s}(\partial D;\mathbb{C}^3) \rightarrow H^{s}(\partial D;\mathbb{C}^3) .
    \end{equation}
    Moreover, the equations in \eqref{uindefDtN} and \eqref{uindefDtN1incompres} should be understood in the sense of distributions.
\end{remark}
\begin{proof}
    The conclusion follows from Lemma \ref{lemma34Bkpks} and the jump formula \eqref{jumpform}.
\end{proof}

As a consequence of Lemmas \ref{lemma34Bkpks} and \ref{lemma35vkpks}, we get the mapping property and the continuity result of nhNtD operators.

\begin{corollary}\label{coroNkpks}
    For any $s\in \mathbb{R}$, $\mathrm{Im}\,k_s>0$, and sufficiently small $k_p$, the operators $\mathcal{N}_{k_p,k_s}$ and $\mathcal{N}_{0,k_s}$ are bounded operators from $H^{s}(\partial D)$ to $H^{s+1}(\partial D)$. Moreover, we have
    \begin{equation}
        \mathcal{N}_{k_p,k_s} = \mathcal{N}_{0,k_s} + \mathcal{O}_{H^{s} \rightarrow H^{s+1}}(k_p).
    \end{equation}
\end{corollary}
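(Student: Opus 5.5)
The plan is to write $\mathcal{N}_{k_p,k_s}$ as a composition of bounded operators built from the layer-potential representation of Lemma~\ref{lemma35vkpks}. For $g\in H^s(\partial D)$ set $(\bm\varphi,c)=(\mathcal{B}_{k_p,k_s})^{-1}(g\mathbf{n},0)$. Since $\partial D$ is smooth, $\mathbf{n}$ is smooth, so $g\mapsto (g\mathbf{n},0)$ is bounded from $H^s(\partial D)$ into $H^s(\partial D;\mathbb{C}^3)\times\mathbb{C}$. By Lemma~\ref{lemma34Bkpks}, $(\mathcal{B}_{k_p,k_s})^{-1}$ is bounded on $H^s\times\mathbb{C}$. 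Then
\begin{equation}
\mathcal{N}_{k_p,k_s}[g] = \mathbf{n}\cdot \mathbf{S}^{k_p,k_s}[\bm\varphi]\big|_+ + c\, \mathbf{v}^*_{k_p,k_s}\cdot\mathbf{n}\big|_{\partial D}.
\end{equation}
The first term is bounded from $H^s$ to $H^{s+1}$ by the mapping property of the single-layer trace $\mathbf{S}^{k_p,k_s}|_+ : H^s(\partial D;\mathbb{C}^3)\to H^{s+1}(\partial D;\mathbb{C}^3)$ from the appendix, followed by multiplication by the smooth field $\mathbf{n}$. The second term is a rank-one operator: $x_0\in D$ lies at positive distance from $\partial D$, so $\mathbf{v}^*_{k_p,k_s}|_{\partial D}$ is $C^\infty$ and lies in every $H^{s+1}$. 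The same argument with $k_p=0$ handles $\mathcal{N}_{0,k_s}$.

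For the continuity estimate, I will first get an expansion of the inverse. From \eqref{continuitB} and the invertibility of $\mathcal{B}_{0,k_s}$, a Neumann series argument gives
\begin{equation}
(\mathcal{B}_{k_p,k_s})^{-1} = (\mathcal{B}_{0,k_s})^{-1} + \mathcal{O}_{H^s\times\mathbb{C}\to H^s\times\mathbb{C}}(k_p).
\end{equation}
Hence $(\bm\varphi_{k_p},c_{k_p}) - (\bm\varphi_0,c_0)$ has norm $O(k_p)\|g\|_{H^s}$.

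I then split the difference as
\begin{equation}
\mathcal{N}_{k_p,k_s}[g]-\mathcal{N}_{0,k_s}[g] = \mathbf{n}\cdot\big(\mathbf{S}^{k_p,k_s}-\mathbf{S}^{0,k_s}\big)[\bm\varphi_{k_p}] + \mathbf{n}\cdot\mathbf{S}^{0,k_s}[\bm\varphi_{k_p}-\bm\varphi_0] + \big(c_{k_p}\mathbf{v}^*_{k_p,k_s}-c_0\mathbf{v}^*_{0,k_s}\big)\cdot\mathbf{n}.
\end{equation}
The second term is $O(k_p)$ by the inverse expansion above. The third term is $O(k_p)$ because $\mathbf{v}^*_{k_p,k_s}$ is analytic in $k_p$ in $C^\infty(\partial D)$: the kernel $\nabla G^{k_p}(x-x_0)$ is smooth for $x$ near $\partial D$, and the prefactor $(k_s^2-2k_p^2)^{-1}$ is analytic because $k_s\neq 0$.

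The main obstacle is the first term, which needs
\begin{equation}
\|\mathbf{S}^{k_p,k_s}-\mathbf{S}^{0,k_s}\|_{H^s\to H^{s+1}} = O(k_p).
\end{equation}
I expect this to be part of, or to follow directly from, Lemma~\ref{lemmaC1}, the same result used to derive \eqref{continuitB}. If it must be checked by hand, the approach is to write the fundamental solution in the form $\Gamma^{k_p,k_s} = \frac{1}{k_s^2}\nabla\nabla G^{k_p} + (\text{shear part})$. The $k_p$-dependence then sits only in the pressure part. The kernel difference $\nabla\nabla\big(G^{k_p}-G^0\big)(x-y)$ expands as $\sum_{n\ge1}(\mathrm{i}k_p)^n\nabla\nabla|x-y|^{n-1}/(4\pi n!)$. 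Its $n=1$ term has zero Hessian, so only the terms with $n\ge2$ contribute, and each is at most $O(|x-y|^{-1})$. Every such term is therefore a pseudodifferential operator of order $\le -1$ (and is smoothing for large $n$), which gives the $H^s\to H^{s+1}$ bound with an overall factor $O(k_p^2)\subset O(k_p)$.
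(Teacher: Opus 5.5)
Your argument is correct and follows essentially the paper's route. The paper obtains the corollary directly from Lemmas \ref{lemma34Bkpks} and \ref{lemma35vkpks} via the representation $\mathbf{v}_{k_p,k_s}=\mathbf{S}^{k_p,k_s}[\bm\varphi]+c\,\mathbf{v}^*_{k_p,k_s}$ with $(\bm\varphi,c)=(\mathcal{B}_{k_p,k_s})^{-1}(g\mathbf{n},0)$, which is exactly your decomposition. The estimate you flag as the main obstacle, $\mathbf{S}^{k_p,k_s}-\mathbf{S}^{0,k_s}=\mathcal{O}_{H^s\to H^{s+1}}(k_p)$, is already stated as Lemma \ref{continuityK} in the appendix (not Lemma \ref{lemmaC1}); your kernel check of it, which even yields $\mathcal{O}(k_p^2)$, is sound up to immaterial sign slips, since $\mathbf{\Gamma}_p^{k_p,k_s}=-k_s^{-2}\nabla\nabla G^{k_p}$ and $g_n$ carries a minus sign.
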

 
 We now turn problem \eqref{nondfinal1} into a boundary integral equation. 
 
\begin{proposition}
    Problem \eqref{nondfinal1} has a nontrivial solution $(p,q,\mathbf{v})$ if and only if
    \begin{equation}\label{Bvarphi=0}
        \mathcal{A} \begin{pmatrix}
            \varphi \\
            \psi
        \end{pmatrix} =0
    \end{equation}
    has a nontrivial solution $(\varphi,\psi)^\top$, where the operator
    \begin{equation}\label{eqofA}
    \mathcal{A} = \mathcal{A}(k,\delta):= 
    \begin{pmatrix}
        -\frac{1}{2}I + K^{k\tau,*} & -1 \\
        k_s^2\mathcal{N}_{k_p,k_s} \left\{ \gamma \mathcal{H} \big(-\frac{1}{2}I + K^{k\tau,*} \big) -\delta S^{k\tau} \right\} & -1
    \end{pmatrix}.
\end{equation}
\end{proposition}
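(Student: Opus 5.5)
Represent the interior pressure by a Helmholtz single-layer potential, $p = S^{k\tau}[\varphi]$ in $D$, with $\varphi\in H^{-1/2}(\partial D)$, say. Here $S^{k\tau}$ is the single-layer potential for $\Delta + k^2\tau^2$ and $K^{k\tau,*}$ is the associated Neumann--Poincar\'e operator. By the jump formula,
\[
\left.\frac{\partial p}{\partial\mathbf{n}}\right|_- = \Big(-\tfrac12 I + K^{k\tau,*}\Big)[\varphi].
\]
The exterior unknowns $(q,\mathbf{v})$ are encoded through the nhNtD operator. By Lemma \ref{lemma35vkpks} and Corollary \ref{coroNkpks}, for $\mathrm{Im}\,k_s>0$ and small $k_p$, the exterior problem with traction datum $g\mathbf{n}$ has a unique solution, and $\mathbf{v}\cdot\mathbf{n}|_+ = \mathcal{N}_{k_p,k_s}[g]$. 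The second unknown $\psi$ will be $k^2$ times the normal velocity, i.e. $\psi = k^2\mathbf{v}\cdot\mathbf{n}|_+$. Equivalently, after the normalization $1/k^2$ is absorbed, $\psi$ is the Neumann trace of $p$. The first row of $\mathcal{A}$ then expresses the kinematic condition $\partial_{\mathbf{n}} p|_- = k^2\,\mathbf{v}\cdot\mathbf{n}|_+$. The second row expresses that the normal velocity is the nhNtD image of the dynamic traction
\[
g = k_s^2\Big[\gamma\mathcal{H}\big(k^{-2}\partial_{\mathbf{n}}p|_-\big) - \delta k^{-2} p|_-\Big].
\]
Indeed, multiplying $\mathbf{v}\cdot\mathbf{n} = \mathcal{N}_{k_p,k_s}[g]$ by $k^2$ gives
\[
\psi = k_s^2\,\mathcal{N}_{k_p,k_s}\Big\{\gamma\mathcal{H}\big(-\tfrac12 I+K^{k\tau,*}\big)[\varphi] - \delta S^{k\tau}[\varphi]\Big\},
\]
which is exactly the second row of \eqref{Bvarphi=0}.

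\emph{Forward direction.} Suppose $(p,q,\mathbf{v})$ is a nontrivial solution of \eqref{nondfinal1}. First I show that $p$ can be written as $S^{k\tau}[\varphi]$. Since $k$ is small, $k^2\tau^2$ is not a Dirichlet eigenvalue of $-\Delta$ on $D$, so $S^{k\tau}:H^{-1/2}\to H^{1/2}$ is invertible; I would invoke the standard fact that it is a small analytic perturbation of the invertible Laplace single layer $S$ in three dimensions. Set $\varphi = (S^{k\tau})^{-1}[p|_{\partial D}]$. Uniqueness for the interior Dirichlet problem then gives $p = S^{k\tau}[\varphi]$ in $D$. Next set $\psi := k^2\,\mathbf{v}\cdot\mathbf{n}|_+$. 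The kinematic condition gives the first row. The exterior pair $(q,\mathbf{v})$ solves \eqref{uindefDtN} with datum $g$ as above, so the definition of $\mathcal{N}_{k_p,k_s}$ together with the uniqueness in Lemma \ref{lemma35vkpks} gives the second row.

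It remains to check that $(\varphi,\psi)\neq 0$. If $\varphi = 0$, then $p = 0$, so $g = 0$. Uniqueness from Lemma \ref{lemma35vkpks} (Lemma \ref{uniquekpzero} in the incompressible case) then forces $\mathbf{v} = 0$, and hence $q = 0$ by \eqref{definitionq} and the equations. This contradicts nontriviality.

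\emph{Converse direction.} Given a nontrivial solution $(\varphi,\psi)$ of \eqref{Bvarphi=0}, define $p := S^{k\tau}[\varphi]$ in $D$. Let $(q,\mathbf{v})$ be the solution furnished by Lemma \ref{lemma35vkpks} with datum $g$ built from $p$ as above. Then $\mathbf{v}\cdot\mathbf{n} = \mathcal{N}_{k_p,k_s}[g] = \psi/k^2$ by the second row, and $\psi/k^2 = k^{-2}\partial_{\mathbf{n}}p|_-$ by the first row. So all the equations of \eqref{nondfinal1} hold. For nontriviality: if $\varphi = 0$, the first row forces $\psi = 0$, which is excluded. Hence $\varphi\neq 0$, and invertibility of $S^{k\tau}$ gives $p|_{\partial D}\neq 0$, so $p\neq 0$.

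\emph{Main obstacle.} The delicate step is the function-space bookkeeping, since $\mathcal{H}$ is a second-order operator. If $\varphi\in H^{-1/2}$, then $\partial_{\mathbf{n}}p\in H^{-1/2}$, $\mathcal{H}\partial_{\mathbf{n}}p\in H^{-5/2}$, and $\mathcal{N}_{k_p,k_s}$ maps this into $H^{-3/2}$. So $\psi$ is a priori only in $H^{-3/2}$, and the first row holds there. The remedy is that Corollary \ref{coroNkpks} and Lemma \ref{lemma35vkpks} hold for every $s\in\mathbb{R}$, with distributional traces defined as in the Remark after Lemma \ref{lemma35vkpks}. I would therefore pose $\mathcal{A}$ on $H^{-1/2}\times H^{-1/2}$ and bootstrap regularity. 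From the first row, $\psi = (-\tfrac12 I+K^{k\tau,*})\varphi\in H^{-1/2}$. Feeding this into the second row gives $\psi$ equal to $k_s^2\mathcal{N}_{k_p,k_s}$ applied to an $H^{-5/2}$ function, so the equivalence holds for distributional solutions of \eqref{nondfinal1}. Using the smoothness of $\partial D$, I expect elliptic regularity then upgrades such solutions to smooth ones, but I would state the equivalence at the distributional level.
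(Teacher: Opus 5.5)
Your proposal is correct and follows the paper's proof. You represent $p=S^{k\tau}[\varphi]$ using invertibility of $S^{k\tau}$ for small $k\tau$, set $\psi=k^2\,\mathbf{v}\cdot\mathbf{n}|_+$, obtain the first row from the jump formula and the kinematic condition and the second from the definition of $\mathcal{N}_{k_p,k_s}$, and reverse the construction for the converse. Your nontriviality checks in both directions and your use of uniqueness to identify $\mathbf{v}\cdot\mathbf{n}|_+$ with $\mathcal{N}_{k_p,k_s}[g]$ fill in details the paper leaves implicit.
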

\begin{proof}
    Assume first that \eqref{nondfinal1} has a nontrivial solution $(p,q,\mathbf{v})$. Let
    \begin{equation}
        \varphi = (S^{k\tau})^{-1} [p|_{\partial D}], \qquad \psi =  k^2 \mathbf{v}\cdot \mathbf{n}.
    \end{equation}
    Here, $S^{k\tau}$ is invertible because $S^0$ is invertible and $k \tau\ll 1$. It is clear that
    \begin{equation}
        S^{k\tau}[\varphi] (x) = p(x) \qquad \mathrm{in} \ D.
    \end{equation}
    As a consequence,
    \begin{equation}\label{computeeta}
        \left(-\frac{1}{2}I + K^{k\tau,*} \right) [\varphi] =\left. \frac{\partial p}{\partial \mathbf{n}}\right|_- = k^2\mathbf{v} \cdot \mathbf{n} |_+ = \psi,
    \end{equation}
    where we used the jump formula \eqref{jumpformula} in the first equality and the third row of \eqref{nondfinal1} in the second equality. We observe that $\mathbf{v}$ satisfies \eqref{uindefDtN} with
    \begin{equation}\label{grep}
        g = k_s^2 \left[ \gamma \mathcal{H}\left(\frac{1}{k^2}\left. \frac{\partial p}{\partial \mathbf{n}}\right|_- \right) -\frac{\delta}{k^2}  p\right] = \frac{k_s^2}{k^2} \left[ \gamma  \mathcal{H}\left(-\frac{1}{2}I + K^{k\tau,*} \right)  - \delta S^{k\tau}\right] [\varphi] .
    \end{equation}
    Inserting \eqref{grep} into $\mathcal{N}_{k_p,k_s} [g] = \mathbf{v}\cdot \mathbf{n}|_+$ yields \eqref{Bvarphi=0}.
    
    Conversely, assume that \eqref{Bvarphi=0} has a non-trivial solution $(\varphi,\psi)^\top$. Let
\begin{equation}
    p:=S^{k\tau}[\varphi] \quad \mathrm{in} \ D,
\end{equation}
    and $(q,\mathbf{v})$ be the solution of \eqref{uindefDtN} with
    \begin{equation}
        g= k_s^2 \left[ \gamma \mathcal{H}\left(\frac{1}{k^2}\left. \frac{\partial p}{\partial \mathbf{n}}\right|_- \right) -\frac{\delta}{k^2}  p\right]  .
    \end{equation}
    It is straightforward to check that $(p,q,\mathbf{v})$ defined above is the solution of \eqref{nondfinal1}.
\end{proof}

\subsection{Asymptotic analysis of Minnaert-type resonances}

We first look at the limiting case where $k=\delta =0$. It is clear that
\begin{equation}
    \mathcal{A}_0 := \mathcal{A}(0,0) =\begin{pmatrix}
        -\frac{1}{2}I + K^{*} & -1 \\
        k_s^2\gamma\mathcal{N}_{0,k_s} \mathcal{H} \big(-\frac{1}{2}I + K^{*} \big)  & -1
    \end{pmatrix}.
\end{equation}

Define 
\begin{equation}
    X := L^2(\partial D) \times L^2(\partial D), \qquad Y: = L^2(\partial D) \times H^{-1}(\partial D).
\end{equation}
By the mapping properties of $\mathcal{N}_{k_p,k_s}$ and $\mathcal{H} = -\Delta_{\partial D}-|B|^2$, $\mathcal{A}(k,\delta)$ is a bounded operator from $X$ to $Y$. Choosing $s=-2$ in Corollary \ref{coroNkpks} and $s=0$ in Lemma \ref{lemmaC2}, we have the following result.

\begin{proposition}
For $\mathrm{Im}\,k_s>0$, $\mathcal{A}(k,\delta)$ admits the following asymptotic expansion as $\delta , k \rightarrow 0$:
    \begin{equation} \label{expansion:A}
    \mathcal{A}(k,\delta) = \mathcal{A}_0 + \mathcal{A}_1(k_p) + k^2 \mathcal{A}_{2,0} +\delta \mathcal{A}_{0,1} + \mathcal{O}_{X\rightarrow Y}(k^3) + \mathcal{O}_{X\rightarrow Y}(\delta k) ,
\end{equation}
where
\begin{equation}
    \begin{aligned}
        &\mathcal{A}_1(k_p) := \begin{pmatrix}
            0 & 0 \\
            k_s^2\gamma (\mathcal{N}_{k_p,k_s}-\mathcal{N}_{0,k_s}) \mathcal{H} \big(-\frac{1}{2}I + K^{*} \big)  & 0
        \end{pmatrix}, \\
        &\mathcal{A}_{2,0} := \begin{pmatrix}
            \tau^2 K^*_2 & 0 \\
            k_s^2 \gamma \tau^2\mathcal{N}_{0,k_s} \mathcal{H} K^*_2 & 0
        \end{pmatrix}, \qquad \mathcal{A}_{0,1} :=\begin{pmatrix}
            0 & 0 \\
            -k_s^2 \mathcal{N}_{0,k_s} S & 0
        \end{pmatrix}.
    \end{aligned}
\end{equation}
\end{proposition}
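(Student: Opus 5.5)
The expansion follows by inserting the small-$k$ expansions of the Helmholtz boundary operators into each block of $\mathcal{A}(k,\delta)$ and then splitting $\mathcal{N}_{k_p,k_s}$ as $\mathcal{N}_{0,k_s}+(\mathcal{N}_{k_p,k_s}-\mathcal{N}_{0,k_s})$. From Lemma \ref{lemmaC2} with $s=0$, I take the standard low-frequency expansions
\begin{equation}
K^{k\tau,*} = K^* + k^2\tau^2 K_2^* + \mathcal{O}_{L^2\to L^2}(k^3), \qquad S^{k\tau} = S + \mathcal{O}_{L^2\to H^1}(k),
\end{equation}
valid in the stronger norms $H^s\to H^s$ and $H^s\to H^{s+1}$ as needed. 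The $\mathcal{O}(k)$ term of $K^{k\tau,*}$ vanishes: it comes from the constant part $-\mathrm{i}k\tau/(4\pi)$ of the kernel of $G^{k\tau}$, and its normal derivative is zero. The first row then reads $-\tfrac12 I + K^* + k^2\tau^2K_2^* + \mathcal{O}(k^3)$ as a map $L^2\to L^2$, which accounts for the top-left entries of $\mathcal{A}_0$ and $\mathcal{A}_{2,0}$. The constant entry $-1$ needs no expansion.

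For the $(2,1)$ block I would track regularity as follows:
\begin{itemize}
\item $(-\tfrac12 I+K^{k\tau,*})$ maps $L^2\to L^2$;
\item $\mathcal{H}=-\Delta_{\partial D}-|B|^2$ maps $L^2\to H^{-2}$;
\item $\mathcal{N}_{k_p,k_s}$ maps $H^{-2}\to H^{-1}$, by Corollary \ref{coroNkpks} with $s=-2$;
\item $S^{k\tau}$ maps $L^2\to H^1\subset H^{-2}$.
\end{itemize}
Hence every term lands in $H^{-1}$, consistent with $Y$. I would then expand the product
\begin{equation}
k_s^2\mathcal{N}_{k_p,k_s}\Big\{\gamma\mathcal{H}\big(-\tfrac12 I + K^* + k^2\tau^2K_2^*+\mathcal{O}(k^3)\big) - \delta S - \delta\,\mathcal{O}(k)\Big\}
\end{equation}
and write $\mathcal{N}_{k_p,k_s}=\mathcal{N}_{0,k_s}+(\mathcal{N}_{k_p,k_s}-\mathcal{N}_{0,k_s})$. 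The terms sort as follows:
\begin{itemize}
\item The order-one piece with $\mathcal{N}_{0,k_s}$ gives $\mathcal{A}_0$.
\item The order-one piece with the difference gives $\mathcal{A}_1(k_p)$. It is retained exactly rather than expanded, and by Corollary \ref{coroNkpks} it is $\mathcal{O}(k_p)=\mathcal{O}(k)$.
\item The $k^2$ term with $\mathcal{N}_{0,k_s}$ gives the $(2,1)$ entry of $\mathcal{A}_{2,0}$.
\item The $\delta S$ term with $\mathcal{N}_{0,k_s}$ gives $\mathcal{A}_{0,1}$.
\end{itemize}

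The remaining cross terms go into the error. They are the $k^2$ term times $(\mathcal{N}_{k_p,k_s}-\mathcal{N}_{0,k_s})$, which is $\mathcal{O}(k^2k_p)=\mathcal{O}(k^3)$; the $\delta$ term times the difference, which is $\mathcal{O}(\delta k_p)=\mathcal{O}(\delta k)$; and $\delta(S^{k\tau}-S)$, which is $\mathcal{O}(\delta k)$. All of these bounds use $k_p\approx k$ and $k_s,\gamma\approx 1$ from \eqref{regimelongwave}, so the constants $k_s^2$ and $\gamma$ are harmless.

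The main obstacle is making the remainder of $K^{k\tau,*}$ hold in a norm strong enough to survive composition with $\mathcal{H}$. We need $\mathcal{H}\,\mathcal{O}(k^3)$ bounded $L^2\to H^{-2}$, and this is fine since $\mathcal{H}$ loses exactly two derivatives. I would verify that Lemma \ref{lemmaC2} provides the remainder as $L^2\to L^2$; it does, because the remainder kernel is smooth, being a power series in $k$ with kernels $|x-y|^{j-1}$ differentiated. I would also confirm the vanishing of the $\mathcal{O}(k)$ term in $K^{k\tau,*}$, since otherwise a $k\mathcal{A}_{1,0}$ term would appear in the expansion.
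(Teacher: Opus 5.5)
Your proposal is correct and follows the paper's argument: the paper gets the expansion by combining Lemma~\ref{lemmaC2} with $s=0$ and Corollary~\ref{coroNkpks} with $s=-2$, and you sort the cross terms into $\mathcal{A}_1(k_p)$, $\mathcal{A}_{2,0}$, $\mathcal{A}_{0,1}$ and the $\mathcal{O}(k^3)+\mathcal{O}(\delta k)$ remainder in the same way. One small imprecision: the kernel of the remainder of $K^{k\tau,*}$ beyond order $k^2$ is not smooth (the $k^4$ term already involves $|x-y|(x-y)$). It is, however, bounded and continuous, which is all the $L^2\to L^2$ bound requires.
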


In the rest of this section, we assume that $D$ is connected. One has (for instance, see \cite[Lemma 2.1]{ammari2018minnaert})
\begin{equation}
    \mathrm{ker}\,(-\frac{1}{2}I + K^{*}) = \mathrm{span}\,\{\varphi_0\}, \qquad \varphi_0:= S^{-1}[1].
\end{equation}
Therefore,
\begin{equation}\label{defPhi0}
    \Phi_0 =\alpha_0\begin{pmatrix}
        \varphi_0 \\ 0
    \end{pmatrix} \in \mathrm{ker}\,\mathcal{A}_0, \qquad \alpha_0 = \frac{1}{\| \varphi_0\|_{L^2(\partial D)}}.
\end{equation}
Here, the constant $\alpha_0$ is chosen such that $\| \Phi_0\|_X=1$. However, contrary to the scalar acoustic setting, the inclusion above does not automatically imply that $\mathrm{ker}\,\mathcal A_0$ is one-dimensional. Indeed, if
\begin{equation}
       \mathcal A_0
    \begin{pmatrix}
        \varphi\\ \psi
    \end{pmatrix}
    =0,
\end{equation}
then the two rows of the equation give
\begin{equation}
    \psi = \big(-\frac{1}{2}I+K^* \big) \varphi, \qquad (I-k_s^2\gamma \mathcal N_{0,k_s}\mathcal H ) \psi =0.
\end{equation}
Thus, possible additional kernel elements of $\mathcal A_0$ are generated by
nonzero elements of
\begin{equation}
    \mathrm{ker}\,(I-k_s^2\gamma \mathcal N_{0,k_s}\mathcal H )
    \cap
    \mathrm{range}\,\big(-\frac{1}{2}I+K^* \big) .
\end{equation}
Equivalently, since $\mathrm{range}\,\big(-\frac{1}{2}I+K^* \big) = L^2_0(\partial D)$ consists of zero-mean boundary data,
additional kernel elements correspond to nontrivial ``capillary-viscous surface modes'' whose normal velocity has a zero average. These modes do not have an analog in the scalar acoustic model, where the limiting kernel is determined only by
$\mathrm{ker}\,\big(-\frac{1}{2}I+K^* \big)$.

We shall therefore impose the following non-resonance condition:
\begin{equation}\label{assumption:nonresonance-A0}
    I-k_s^2\gamma \mathcal N_{0,k_s}\mathcal H \textrm{ is injective.}
\end{equation}
An equivalent PDE criterion for \eqref{assumption:nonresonance-A0} is as follows. The only pair $(q,\mathbf{v})$ satisfying
\begin{equation}\left\{
    \begin{aligned}
        &\nabla \cdot (2\nabla^s \mathbf{v} - q \mathbf{I}) + k_s^2 \mathbf{v} =0 , \quad \nabla \cdot \mathbf{v}=0&&\mathrm{in}\ \mathbb{R}^3 \setminus \overline{D} , \\
    & (2\nabla^s \mathbf{v} - q \mathbf{I})\mathbf{n} \big|_+ = k_s^2 \gamma \mathcal{H}(\mathbf{v}\cdot \mathbf{n})\mathbf{n}   && \mathrm{on}\ \partial D, \\
        & \mathbf{v} \textrm{ satisfies radiation condition }\eqref{radiationlimit} 
    \end{aligned}\right.
\end{equation}
is the trivial solution $(q,\mathbf{v}) = (0,0)$. The following result shows that the non-resonance condition \eqref{assumption:nonresonance-A0} holds ``generically''.

\begin{lemma}\label{lemma39bijection}
    For every $\gamma\neq 0$, there is a countable set $E_\gamma$ such that, for any $k_s \in\{ \mathrm{Im}\,z>0\} \setminus E_\gamma$, the operator 
    \begin{equation}
        I - k_s^2 \gamma \mathcal{N}_{0,k_s}\mathcal{H} : L^2(\partial D) \rightarrow H^{-1}(\partial D)
    \end{equation}
    is a bijection. In particular, the non-resonance condition \eqref{assumption:nonresonance-A0} holds.
\end{lemma}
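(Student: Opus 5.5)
The plan is to apply the analytic Fredholm theorem to the family
\begin{equation}
    T(k_s) := I - k_s^2 \gamma \mathcal{N}_{0,k_s}\mathcal{H} : L^2(\partial D) \rightarrow H^{-1}(\partial D), \qquad k_s \in \{\mathrm{Im}\,z>0\}.
\end{equation}
The argument has three steps: show that $T(k_s)$ is Fredholm of index zero, show that it depends analytically on $k_s$, and exhibit one point of the connected upper half-plane where it is invertible.

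\textbf{Fredholm property.} By Corollary \ref{coroNkpks} with $s=-2$, $\mathcal{N}_{0,k_s}$ maps $H^{-2}(\partial D)$ to $H^{-1}(\partial D)$ boundedly. Since $\mathcal{H}=-\Delta_{\partial D}-|B|^2$ maps $L^2$ to $H^{-2}$, the composition $\mathcal{N}_{0,k_s}\mathcal{H}$ is bounded from $L^2$ to $H^{-1}$.

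The identity term, however, is only the compact embedding $L^2\hookrightarrow H^{-1}$. So the principal part of $T(k_s)$ is $-k_s^2\gamma\mathcal{N}_{0,k_s}\mathcal{H}$, and I will show this operator is Fredholm of index zero $L^2\to H^{-1}$.

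\begin{itemize}
\item $\mathcal{H}:L^2\to H^{-2}$ is an elliptic second-order operator, self-adjoint on the closed surface, hence Fredholm of index zero.
\item For $\mathcal{N}_{0,k_s}:H^{-2}\to H^{-1}$, I will show it is an elliptic pseudodifferential operator of order $-1$. The approach is to use the representation of Lemma \ref{lemma35vkpks}: its principal symbol should coincide with that of the static (Stokes) normal Neumann-to-Dirichlet map, a positive multiple of $|\xi|^{-1}$. The injectivity proof of Lemma \ref{uniquekpzero} shows that $\mathcal{N}_{0,k_s}$ is injective. Index zero then follows by deforming in $k_s$ or by a duality/symmetry argument (the Green identity gives $\mathcal{N}_{0,k_s}^{*}=\mathcal{N}_{0,-\overline{k_s}}$-type relations).
\end{itemize}

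Hence $T(k_s)$ equals (Fredholm of index zero) plus a compact operator, and is therefore Fredholm of index zero. Analyticity in $k_s$ follows from the analyticity of $\mathcal{B}_{0,k_s}^{-1}$ in $k_s$, which holds since the hydrodynamic kernels are analytic in $k_s$, combined with the formula of Lemma \ref{lemma35vkpks}.

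\textbf{Invertibility at one point.} It suffices to prove injectivity at a single $k_s$; the natural candidate is $k_s=\mathrm{e}^{\mathrm{i}\pi/4}t$. Suppose $T(k_s)\psi=0$ and let $(q,\mathbf{v})$ solve the PDE criterion stated after \eqref{assumption:nonresonance-A0}. As in Lemma \ref{uniquekpzero}, multiply by $\overline{\mathbf{v}}$, integrate over the exterior, and use the radiation condition to discard the boundary term at infinity. This gives
\begin{equation}
\int_{\mathbb{R}^3\setminus\overline D} 2|\nabla^s\mathbf{v}|^2 - k_s^2|\mathbf{v}|^2\,dx = -k_s^2\gamma\int_{\partial D}\mathcal{H}(\mathbf{v}\cdot\mathbf{n})\,\overline{\mathbf{v}\cdot\mathbf{n}}\,dS,
\end{equation}
where the right-hand side is $k_s^2\gamma$ times a real quadratic form. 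Dividing by $k_s^2$ yields
\begin{equation}
2k_s^{-2}\|\nabla^s\mathbf{v}\|^2-\|\mathbf{v}\|^2
\end{equation}
equal to a real number. With $k_s^2=\mathrm{i}t^2$ (so $\mathrm{Im}\,k_s>0$), the imaginary part forces $\nabla^s\mathbf{v}=0$. Then $\mathbf{v}$ is a rigid motion decaying at infinity, so $\mathbf{v}=0$. The real part then says $\langle\mathcal{H}\psi,\psi\rangle=0$, which is consistent but no longer needed. Thus $\mathbf{v}=0$, and then $\psi=\mathbf{v}\cdot\mathbf{n}=0$.

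Note that this works for every $k_s$ with $k_s^2\notin\mathbb{R}$, which physically is exactly the relevant case. It also handles $\gamma$ of either sign, since only the imaginary part is used.

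\textbf{Conclusion and main obstacle.} The analytic Fredholm theorem on the connected domain $\{\mathrm{Im}\,z>0\}$ now gives that $T(k_s)$ is invertible except on a discrete, hence countable, set $E_\gamma$. Bijectivity there implies the non-resonance condition \eqref{assumption:nonresonance-A0}.

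The main obstacle is the Fredholm-index-zero claim, that is, the ellipticity of $\mathcal{N}_{0,k_s}$ as an order $-1$ operator $H^{-2}\to H^{-1}$, including its invertibility modulo compacts. I would first try to compute its principal symbol by freezing coefficients at a boundary point and solving the half-space Stokes–Brinkman traction problem with normal data explicitly, expecting a symbol of the form $c|\xi|^{-1}$ with $c>0$.
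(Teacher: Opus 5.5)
Your proposal is correct and follows essentially the paper's route: ellipticity of $\mathcal{N}_{0,k_s}$ as an order $-1$ pseudodifferential operator gives Fredholm index zero, analyticity in $k_s$ with the analytic Fredholm theorem reduces the claim to injectivity at one point, and that injectivity comes from the exterior energy identity with $k_s^2\gamma$ purely imaginary (your choice $k_s^2=\mathrm{i}t^2$). The paper settles your ``main obstacle'' by citing the pseudodifferential theory for the Lam\'e system, which gives principal symbol $-\frac{1}{2|\xi|_g}$. This is negative rather than positive, which is harmless, and it makes index zero immediate without any deformation argument, because $\mathcal{N}_{0,k_s}+\frac12(I-\Delta_{\partial D})^{-1/2}$ is then of order $-2$ and hence compact $H^{-2}\to H^{-1}$. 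Your imaginary-part version of the energy argument, valid for every $k_s$ with $k_s^2\notin\mathbb{R}$ when $\gamma$ is real (as both arguments implicitly assume), is a mild strengthening of the paper's single-point choice.
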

\begin{proof}
    Denote $I - k_s^2 \gamma \mathcal{N}_{0,k_s}\mathcal{H} $ by $T$.
    By the pseudodifferential operator theory for the Lam\'{e} system \cite{agranovich1999spectral}, $\mathcal{N}_{0,k_s}$ is an elliptic pseudodifferential operator of order $-1$ with principal symbol 
    \begin{equation}
        \sigma_{-1}(\mathcal{N}_{0,k_s}) = -\frac{1}{2|\xi|_g},
    \end{equation}
    where $g$ denotes the Riemannian metric of $\partial D$. Moreover, $\mathcal{H}=-(\Delta_{\partial D} + |B|^2)$ is an elliptic operator of order $2$ with principal symbol $\sigma_2(\mathcal{H})=|\xi|_g^2$. Hence, $T$ is of order $1 $ with principal symbol
    \begin{equation}
        \sigma_{1}(T) = \frac{k_s^2  \gamma}{2} |\xi|_g.
    \end{equation}
    Since $\mathrm{Im}\,k_s >0$ and $\gamma \neq 0$, $T$ is elliptic. Let $\Lambda=(I-\Delta_{\partial D})^{1/2}$. Then $\Lambda:L^2(\partial D)\to H^{-1}(\partial D)$ is an isomorphism, and $\Lambda^{-1}T$ is an order-zero pseudodifferential operator with principal
symbol $\frac{1}{2}k_s^2\gamma$. Thus
\begin{equation}
    \Lambda^{-1}T=\frac{k_s^2\gamma}{2}I+K,
\end{equation}
where $K$ is of order $-1$, and hence is compact on $L^2(\partial D)$. Therefore, $\Lambda^{-1}T$, and hence $T$, is Fredholm of index zero.

Since the hydrodynamic fundamental solution (see \eqref{deps}) is analytic in $k_s$, $T$ is also analytic in $k_s$. By the analytic Fredholm theory, it remains to show that $T$ is injective for some $k_s \in \{ \mathrm{Im}\,z>0\}$.

Assume that $T\phi =(I - k_s^2 \gamma \mathcal{N}_{0,k_s}\mathcal{H} )[\phi] =0$. Let $(q,\mathbf{v})$ be the solution of \eqref{uindefDtN1incompres} with $g = \mathcal{H}\phi$. Then, $(q,\mathbf{v})$ satisfies the following problem:
    \begin{equation}\label{uindefDtN1incompres12}
    \left\{
    \begin{aligned}
        & \nabla \cdot (2\nabla^s \mathbf{v} - q \mathbf{I}) + k_s^2\mathbf{v}=0, \quad \nabla \cdot \mathbf{v} = 0  &&\mathrm{in}\ \mathbb{R}^3 \setminus \overline{D} , \\
    & (2\nabla^s \mathbf{v} - q \mathbf{I})\mathbf{n} \big|_+ = \mathcal{H}(\phi)\mathbf{n}   && \mathrm{on}\ \partial D, \\
    & \mathbf{v}\cdot \mathbf{n}|_+ = \frac{1}{k_s^2 \gamma } \phi   && \mathrm{on}\ \partial D,\\
        & \mathbf{v} \textrm{ satisfies radiation condition }\eqref{radiationlimit} .
    \end{aligned}\right.
\end{equation}
We aim to find some $\mathrm{Im}\,k_s>0$ such that $\mathbf{v} \equiv 0$. Then, $\phi = k_s^2 \gamma \mathbf{v}\cdot \mathbf{n}|_+ =0$. This completes the proof.

Let $\overline{D} \subset B_R$. Multiplying $\nabla \cdot (2\nabla^s \mathbf{v} - q \mathbf{I}) + k_s^2\mathbf{v}=0$ with $\overline{\mathbf{v}}$ and integrating over $B_R$. Using \eqref{uindefDtN1incompres12}, one gets
\begin{equation}\label{identitylimit2}
    \int_{B_R\setminus \overline{D}} 2|\nabla^s \mathbf{v}|^2 - k_s^2 |\mathbf{v}|^2 \,dx = \int_{\partial B_R} (2\nabla^s \mathbf{v} - q \mathbf{I}) \mathbf{n} \cdot \overline{\mathbf{v}}\,dS - \frac{1}{\overline{k_s^2 \gamma} } \int_{\partial D} \mathcal{H} (\phi) \overline{\phi}\,dS.
\end{equation}
By the same argument in the proof of Lemma \ref{uniquekpzero}, we obtain that the first term of the right-hand side of \eqref{identitylimit2} vanishes as $R\rightarrow\infty$. 
Therefore, 
\begin{equation}
    \int_{\mathbb{R}^3\setminus \overline{D}} 2|\nabla^s \mathbf{v}|^2 - k_s^2 |\mathbf{v}|^2 \,dx =- \frac{1}{\overline{k_s^2 \gamma} } \int_{\partial D} \mathcal{H} (\phi) \overline{\phi}\,dS.
\end{equation}
Since $\mathcal{H}$ is self-adjoint, $\int_{\partial D} \mathcal{H} (\phi) \overline{\phi}\,dS$ is real. We now choose $k_s$ so that $k_s^2 \gamma$ is purely imaginary. Then, $\int_{\mathbb{R}^3\setminus \overline{D}} |\nabla^s \mathbf{v}|^2  =0$. This, combined with the decay of $\mathbf{v}$ at infinity, yields $\mathbf{v} \equiv 0$. 
\end{proof}

Motivated by Lemma \ref{lemma39bijection}, we give the following definition.

\begin{definition}\label{def:admissible}
    We say $(\gamma,k_s)$ is admissible if $\gamma \neq 0$ and $k_s \in \{ \mathrm{Im}\,z>0\} \setminus E_\gamma$, where $E_\gamma$ is the countable set given in Lemma \ref{lemma39bijection}. 
\end{definition}

Apparently, $(\gamma,k_s)$ is admissible if and only if $\gamma \neq 0$, $\mathrm{Im}\,k_s>0$, and the non-resonance condition \eqref{assumption:nonresonance-A0} holds. A corollary of Lemma \ref{lemma39bijection} is the following result.

\begin{corollary}\label{lemma39kerA}
    Assume that $(\gamma,k_s)$ is admissible. The operator 
    \begin{equation}
        I - k_s^2 \gamma \mathcal{N}_{0,k_s}\mathcal{H} : L^2(\partial D) \rightarrow H^{-1}(\partial D)
    \end{equation}
    is a bijection. In this case,
    \begin{equation}
  \mathrm{ker}\,\mathcal{A}_0= \mathrm{span}\,\{\Phi_0 \}, \qquad \mathrm{ker}\, \mathcal{A}_0^* = \mathrm{span}\,\{\Psi_0\}.
\end{equation}
Here, $\Phi_0$ is defined in \eqref{defPhi0}, and 
\begin{equation}\label{Psi0def}
    \Psi_0  = \beta_0\begin{pmatrix}
            \psi_0 \\ - \psi_0
        \end{pmatrix}\in Y' := L^2(\partial D) \times H^1(\partial D), \qquad \beta_0:= \frac{1}{\sqrt{2}\| \psi_0 \|_{L^2(\partial D)}} ,
\end{equation}
where $\psi_0 = (I - \overline{k_s^2 \gamma} \mathcal{H} \mathcal{N}_{0,k_s}^*)^{-1}[1]$ and $\beta_0$ is a constant chosen such that $\|\Psi_0\|_{L^2(\partial D) \times L^2(\partial D)}=1$.
\end{corollary}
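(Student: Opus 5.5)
The bijectivity of $T:=I-k_s^2\gamma\mathcal N_{0,k_s}\mathcal H$ is immediate from Lemma \ref{lemma39bijection}. That lemma shows $T:L^2(\partial D)\to H^{-1}(\partial D)$ is Fredholm of index zero. Admissibility means injectivity, which is the non-resonance condition \eqref{assumption:nonresonance-A0}, so $T$ is a bijection. The remaining work is to identify $\ker\mathcal A_0$ and $\ker\mathcal A_0^*$ directly from the block structure of $\mathcal A_0$.

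For $\ker\mathcal A_0$, suppose $\mathcal A_0(\varphi,\psi)^\top=0$. As noted before the statement, the first row gives $\psi=(-\frac12 I+K^*)\varphi$. Substituting this into the second row gives $T\psi=0$. Hence $\psi=0$, so $\varphi\in\ker(-\frac12I+K^*)=\mathrm{span}\{\varphi_0\}$. This yields $\ker\mathcal A_0=\mathrm{span}\{\Phi_0\}$.

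For the adjoint, I regard $\mathcal A_0:X\to Y$ and take $\mathcal A_0^*:Y'\to X'$ with respect to the $L^2$ pairing, where $Y'=L^2\times H^1$. Write $A:=-\frac12I+K^*$. The adjoint acting on $(u,w)$ then has two components:
\begin{equation}
\mathcal A_0^*\begin{pmatrix}u\\ w\end{pmatrix}=\begin{pmatrix}A^*u+\overline{k_s^2\gamma}\,A^*\mathcal H\mathcal N_{0,k_s}^*w\\ -u-w\end{pmatrix}.
\end{equation}
Here $A^*=-\frac12I+K$, and I use that $\mathcal H$ is self-adjoint. The kernel equations are therefore $u=-w$ together with $A^*(I-\overline{k_s^2\gamma}\mathcal H\mathcal N_{0,k_s}^*)w=0$. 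Since $\ker(-\frac12I+K)=\mathrm{span}\{1\}$ for connected $D$, this reads $(I-\overline{k_s^2\gamma}\mathcal H\mathcal N^*_{0,k_s})w\in\mathrm{span}\{1\}$.

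The operator $I-\overline{k_s^2\gamma}\mathcal H\mathcal N^*_{0,k_s}$ is exactly $T^*:H^1\to L^2$. Because $T$ is a bijection between Hilbert spaces, $T^*$ is also a bijection. So $w=c\,(T^*)^{-1}[1]=c\psi_0$, and $\psi_0$ is well defined and lies in $H^1$. This gives $\ker\mathcal A_0^*=\mathrm{span}\{(\psi_0,-\psi_0)\}$. The normalization by $\beta_0$ is then immediate.

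The main obstacle is bookkeeping the adjoint correctly across the asymmetric spaces $X=L^2\times L^2$ and $Y=L^2\times H^{-1}$. The points to check are that $\mathcal N_{0,k_s}^*$ maps $H^{1}\to H^{2}$, which follows by duality from Corollary \ref{coroNkpks} with $s=-2$, and that the composite $\mathcal H\mathcal N^*_{0,k_s}$ lands in $L^2$. I also need $\psi_0\neq0$ and $1\in L^2$ to lie in the range of $T^*$; both hold by bijectivity. Finally, the kernel of $-\frac12I+K$ on $L^2$ is the constants, which is the standard fact dual to $\mathrm{range}(-\frac12I+K^*)=L^2_0$.
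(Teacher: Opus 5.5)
Your proposal is correct and uses the same key identity as the paper, namely $\langle (I-k_s^2\gamma\mathcal N_{0,k_s}\mathcal H)(-\frac12 I+K^*)\varphi,\psi_0\rangle=\langle(-\frac12I+K^*)\varphi,1\rangle=0$. Your treatment of $\ker\mathcal A_0$ matches the paper's discussion before the non-resonance condition. The one difference is that you compute $\mathcal A_0^*$ explicitly and solve $\mathcal A_0^*(u,w)^\top=0$, which yields $\ker\mathcal A_0^*=\mathrm{span}\{\Psi_0\}$ including its one-dimensionality; the paper only checks by pairing that $\Psi_0$ annihilates $\mathrm{range}\,\mathcal A_0$ and leaves the dimension count to the Fredholm index-zero property of $\mathcal A_0$.
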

\begin{proof}
    We only need to prove \eqref{Psi0def}. For any $\varphi , \psi \in L^2(\partial D)$, we have
    \begin{equation}
        \mathcal{A}_0 \begin{pmatrix}
            \varphi \\ \psi
        \end{pmatrix} = \begin{pmatrix}
        \big(-\frac{1}{2}I + K^{*} \big) \varphi - \psi  \\
        k_s^2\gamma\mathcal{N}_{0,k_s} \mathcal{H} \big(-\frac{1}{2}I + K^{*} \big) \varphi - \psi  
    \end{pmatrix} \in L^2(\partial D) \times H^{-1}(\partial D) .
    \end{equation}
    Pairing with $(\psi_0 , - \psi_0)^\top \in L^2(\partial D) \times H^1(\partial D)$ yields
    \begin{equation}
    \begin{aligned}
        &\left\langle \big(-\frac{1}{2}I + K^{*} \big) \varphi - \psi , \psi_0\right\rangle_{L^2,L^2} -\left\langle k_s^2\gamma\mathcal{N}_{0,k_s} \mathcal{H} \big(-\frac{1}{2}I + K^{*} \big) \varphi - \psi  , \psi_0\right\rangle_{H^{-1},H^1} \\
        & = \left\langle (I-k_s^2\gamma\mathcal{N}_{0,k_s} \mathcal{H} ) \big(-\frac{1}{2}I + K^{*} \big) \varphi , \psi_0\right\rangle_{H^{-1},H^1} \\
        &= \left\langle \big(-\frac{1}{2}I + K^{*} \big) \varphi , (I-\overline{k_s^2 \gamma} \mathcal{H}\mathcal{N}^*_{0,k_s}  ) \psi_0\right\rangle_{L^2,L^2} \\
        & =0,
    \end{aligned}
    \end{equation}
    where we used $\big(\mathrm{range} \, \big(-\frac{1}{2}I + K^{*} \big)\big)^\perp  = \mathrm{span}\,\{1\}$. The proof is complete.
\end{proof}

Lemma \ref{lemma34Bkpks} shows that $\mathcal{A}(k,\delta)$, as Fredholm operators of index zero, is an analytic family for sufficiently small $k$. The above lemma shows that $k=0$ is a characteristic value of the family $\mathcal{A}(k,\delta)$. By Gohberg-Sigal theory \cite{ammari2009layer}, we can conclude the following result about the existence of the quasi-static resonance.

\begin{proposition}
    Assume that $(\gamma,k_s)$ is admissible. For any sufficiently small $\delta$, there exists a characteristic value $k_0=k_0(\delta)$ of the operator-valued analytic family $\mathcal{A}(k,\delta)$ such that $k_0(0) = 0$, and $k_0$ depends on $\delta$ continuously. This characteristic value is also the quasi-static resonance (or Minnaert-type resonance).
\end{proposition}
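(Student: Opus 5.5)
We will apply the generalized Rouché theorem of Gohberg--Sigal to the operator-valued family $k\mapsto \mathcal{A}(k,\delta)$, viewed as a perturbation in $\delta$ of $k\mapsto\mathcal{A}(k,0)$. Throughout, $k_s$ and $\gamma$ are treated as fixed parameters with $(\gamma,k_s)$ admissible, and $k_p=k_p(k)$ through \eqref{defkpks}. Since $k_p^2$ is proportional to $k^2$ with a nonvanishing analytic prefactor, the family is analytic in $k$.

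\textbf{Step 1: Fredholm analyticity in a punctured disc.} We first show that on a small closed disc $\overline{B_r}=\{|k|\le r\}$, the family $\mathcal{A}(k,\delta):X\to Y$ is analytic and Fredholm of index zero, uniformly for small $\delta$.
\begin{itemize}
\item The $(1,1)$ entry $-\tfrac12 I+K^{k\tau,*}$ is Fredholm of index zero on $L^2$.
\item The $(2,2)$ entry is $-I$.
\item In the $(2,1)$ entry, $k_s^2\gamma\mathcal{N}_{k_p,k_s}\mathcal{H}(-\tfrac12 I+K^{k\tau,*})$ maps $L^2\to H^{-1}$, by Corollary~\ref{coroNkpks} with $s=-2$.
\end{itemize}
Performing the block elimination $\psi=(-\tfrac12 I+K^{k\tau,*})\varphi$, the operator is equivalent to
\[
\mathrm{diag}\Big(-\tfrac12 I+K^{k\tau,*},\;I-k_s^2\mathcal{N}_{k_p,k_s}\{\gamma\mathcal{H}-\delta S^{k\tau}(-\tfrac12 I+K^{k\tau,*})^{-1}\cdots\}\Big).
\]
It is cleaner, however, to argue that $\mathcal{A}(k,\delta)$ is a compact perturbation of the triangular operator
\[
\begin{pmatrix} -\tfrac12 I+K^{k\tau,*} & -1\\ 0 & \Lambda\end{pmatrix}
\]
composed with the bijection of Lemma~\ref{lemma39bijection}, so that it is Fredholm of index zero.

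For $k=0$, $\delta=0$, Corollary~\ref{lemma39kerA} gives $\ker\mathcal{A}_0=\mathrm{span}\{\Phi_0\}$. We then show $\mathcal{A}(k,0)$ is invertible for $0<|k|\le r$. Since $\mathcal{A}(\cdot,0)$ is analytic Fredholm and invertible somewhere in the disc, its characteristic values are isolated, and we can shrink $r$ so that $k=0$ is the only one in $\overline{B_r}$. Invertibility somewhere is read off from the expansion \eqref{expansion:A} and the one-dimensional kernel/cokernel: the leading perturbation coefficient $\langle(\mathcal{A}_1(k_p)+k^2\mathcal{A}_{2,0})\Phi_0,\Psi_0\rangle$ is nonzero for small $k\neq 0$.

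\textbf{Step 2: Multiplicity at $k=0$.} We compute that $0$ is a characteristic value of $\mathcal{A}(\cdot,0)$ with finite, nonzero multiplicity $\mathcal{M}(\mathcal{A}(\cdot,0);\partial B_r)$. This follows because $\mathcal{A}_0$ has a nontrivial kernel, while the root function $\Phi_0$ extends as an analytic family.

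\textbf{Step 3: Rouché perturbation.} The expansion \eqref{expansion:A} gives
\[
\|\mathcal{A}(k,\delta)-\mathcal{A}(k,0)\|_{X\to Y}=O(\delta)
\]
uniformly on $\partial B_r$. Since $\mathcal{A}(\cdot,0)^{-1}$ is bounded on the compact set $\partial B_r$, we have
\[
\|\mathcal{A}(k,0)^{-1}(\mathcal{A}(k,\delta)-\mathcal{A}(k,0))\|<1
\]
there for $\delta$ small. The generalized Rouché theorem then yields
\[
\mathcal{M}(\mathcal{A}(\cdot,\delta);\partial B_r)=\mathcal{M}(\mathcal{A}(\cdot,0);\partial B_r)\ge 1,
\]
so a characteristic value $k_0(\delta)\in B_r$ exists. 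Repeating the argument with arbitrarily small radii $r'$, with $\delta$ chosen accordingly, gives $k_0(\delta)\to 0$ as $\delta\to 0$. This is continuity at $\delta=0$; continuity at other small $\delta$ follows by the same Rouché argument around $k_0(\delta)$, or by selecting the branch via the argument principle (trace formula). Finally, the preceding proposition, which characterizes nontrivial solutions of \eqref{nondfinal1} as kernel elements of $\mathcal{A}$, identifies $k_0$ as the Minnaert-type resonance.

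The main obstacle is Step 1: establishing the Fredholm-index-zero property between the unequal spaces $X\to Y$ uniformly near $k=0$, and ensuring $0$ is an isolated characteristic value of $\mathcal{A}(\cdot,0)$. For the latter, we would first try an explicit Schur-complement/Lyapunov–Schmidt reduction onto $\Phi_0,\Psi_0$ to exhibit invertibility for small $k\neq0$.
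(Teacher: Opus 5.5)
Your proposal follows the paper's route. The paper also treats $\mathcal{A}(\cdot,\delta):X\to Y$ as an analytic family of index-zero Fredholm operators, takes $k=0$ as a characteristic value of $\mathcal{A}(\cdot,0)$ by Corollary~\ref{lemma39kerA}, and then invokes Gohberg--Sigal (generalized Rouch\'e) theory. You also usefully supply the isolatedness that the paper leaves implicit: since $\mathcal{A}_1(k_p)\Phi_0=0$ and $\langle \mathcal{A}_{2,0}[\Phi_0],\Psi_0\rangle_{Y,Y'}=-\alpha_0\beta_0\tau^2|D|\neq 0$, the reduced scalar $A(k,0)$ from the proof of Theorem~\ref{thm314} equals $\alpha_0\beta_0\tau^2|D|\,k^2+O(k^3)$. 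Hence $\mathcal{A}(k,0)$ is invertible for small $k\neq 0$, and via the rank-one factorization $\mathcal{A}=(\tilde{\mathcal{A}}_0+\mathcal{B})\bigl(I-(\tilde{\mathcal{A}}_0+\mathcal{B})^{-1}\mathcal{P}_0\bigr)$ the multiplicity at $0$ is exactly two, matching $k_0^\pm$.

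The one step to repair is the Fredholm claim in Step 1. Your diagonal form inverts $-\tfrac12 I+K^{k\tau,*}$, which is singular at $k=0$. Also, $\mathcal{A}$ is not a compact perturbation of a block upper-triangular operator, because its $(2,1)$ block is a non-compact operator of order one. Instead, subtract the first row from the second and substitute $\psi'=\psi-(-\tfrac12 I+K^{k\tau,*})\varphi$. This shows that $\mathcal{A}:X\to Y$ is Fredholm of index zero if and only if $-(I-k_s^2\gamma\mathcal{N}_{k_p,k_s}\mathcal{H})(-\tfrac12 I+K^{k\tau,*})-\delta k_s^2\mathcal{N}_{k_p,k_s}S^{k\tau}:L^2(\partial D)\to H^{-1}(\partial D)$ is. That holds because the first factor is Fredholm of index zero by Lemma~\ref{lemma39bijection} and Corollary~\ref{coroNkpks} with $s=-2$, the second factor is Fredholm of index zero on $L^2(\partial D)$, and the last term is compact.
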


We define the continuous embedding
\begin{equation}
    \iota: Y' = L^2(\partial D) \times H^1(\partial D) \hookrightarrow Y = L^2(\partial D) \times H^{-1}(\partial D),
\end{equation}
and denote $\hat{\Psi} = \iota \Psi$ for $\Psi \in Y'$. We define a projection $\mathcal{P}_0:X\rightarrow Y$ by
\begin{equation}\label{projection0}
    \mathcal{P}_0[\Phi] := \langle \Phi, \Phi_0 \rangle_X \hat{\Psi}_0,
\end{equation}
and let
\begin{equation}
    \tilde{\mathcal{A}}_0 = \mathcal{A}_0 + \mathcal{P}_0 :X\rightarrow Y.
\end{equation}
Following \cite[Lemma 2.4]{ammari2018minnaert}, the following results hold.

\begin{lemma}\label{lemma312}
    Assume that $(\gamma,k_s)$ is admissible. We have
    \begin{itemize}
        \item[(a)] The operator $\tilde{\mathcal{A}}_0$ is a bijection from $X$ to $Y$. Moreover, $\tilde{\mathcal{A}}_0[\Phi_0]=\hat{\Psi}_0$;
        \item[(b)] $\tilde{\mathcal{A}}_0^*$, the adjoint of $\tilde{\mathcal{A}}_0$, is a bijection from $Y'$ to $X'=X$. Moreover, $\tilde{\mathcal{A}}_0^*[\Psi_0] = \Phi_0$.
    \end{itemize}
\end{lemma}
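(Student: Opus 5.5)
The plan is to follow the standard argument of \cite[Lemma 2.4]{ammari2018minnaert}, adapted to the non-symmetric pairing between $X$ and $Y=L^2\times H^{-1}$, with dual $Y'=L^2\times H^1$. We first record that $\mathcal{A}_0:X\to Y$ is Fredholm of index zero. The first row $-\frac12 I+K^*$ is Fredholm of index zero on $L^2(\partial D)$. Via the triangular structure, the second row reduces to the operator $I-k_s^2\gamma\mathcal N_{0,k_s}\mathcal H$, which Lemma \ref{lemma39bijection} and Corollary \ref{lemma39kerA} show is a bijection $L^2\to H^{-1}$.

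Concretely, write $(\varphi,\psi)\mapsto(u,\psi)$ with $u=(-\frac12 I+K^*)\varphi-\psi$. Subtracting the first row from the second then gives $(I-k_s^2\gamma\mathcal N_{0,k_s}\mathcal H)(-\frac12 I+K^*)\varphi - u$. This exhibits $\mathcal A_0$ as a composition of an isomorphism with $\mathrm{diag}(-\frac12 I+K^*,\,I)$, up to a triangular invertible factor. Hence $\mathcal A_0$ is Fredholm of index zero, and its kernel and cokernel are one-dimensional, spanned by $\Phi_0$ and $\Psi_0$ respectively (Corollary \ref{lemma39kerA}). Since $\mathcal P_0$ has rank one, $\tilde{\mathcal A}_0=\mathcal A_0+\mathcal P_0$ is also Fredholm of index zero. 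It therefore suffices to prove injectivity.

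For (a), suppose $\tilde{\mathcal A}_0\Phi=0$, so $\mathcal A_0\Phi=-\langle\Phi,\Phi_0\rangle_X\hat\Psi_0$. Pair both sides with $\Psi_0\in Y'$ using the $Y$--$Y'$ duality. The left side vanishes because $\Psi_0\in\ker\mathcal A_0^*$. The right side equals $-\langle\Phi,\Phi_0\rangle_X\langle\hat\Psi_0,\Psi_0\rangle=-\langle\Phi,\Phi_0\rangle_X\|\Psi_0\|^2_{L^2\times L^2}=-\langle\Phi,\Phi_0\rangle_X$, where we used the normalization $\|\Psi_0\|_{L^2\times L^2}=1$. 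Thus $\langle\Phi,\Phi_0\rangle_X=0$ and $\mathcal A_0\Phi=0$. Consequently $\Phi=c\Phi_0$, and the orthogonality together with $\|\Phi_0\|_X=1$ forces $c=0$. The identity $\tilde{\mathcal A}_0\Phi_0=0+\langle\Phi_0,\Phi_0\rangle_X\hat\Psi_0=\hat\Psi_0$ follows directly from the definition.

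For (b), bijectivity of $\tilde{\mathcal A}_0^*:Y'\to X'$ follows from (a) by the closed range theorem, since the adjoint of a bounded bijection between Banach spaces is a bijection. We compute $\mathcal P_0^*\Psi=\langle\hat\Psi_0,\Psi\rangle\Phi_0$, so $\tilde{\mathcal A}_0^*\Psi_0=0+\|\Psi_0\|^2_{L^2\times L^2}\Phi_0=\Phi_0$. We identify $X'$ with $X$ via the $L^2$ inner product, with the conjugate-linear convention fixed consistently.

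The main obstacle is the bookkeeping of the duality. The pairing $\langle\hat\Psi_0,\Psi_0\rangle$ must be understood as the $H^{-1}$--$H^1$ pairing in the second component, and this coincides with the $L^2$ pairing because $\psi_0\in H^1$. One must also check that $\psi_0$ really lies in $H^1$, so that $\Psi_0\in Y'$. This holds because $(I-\overline{k_s^2\gamma}\mathcal H\mathcal N^*_{0,k_s})$ is the adjoint of the bijection $L^2\to H^{-1}$, hence a bijection $H^1\to L^2$, and $1\in L^2$.
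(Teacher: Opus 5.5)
Your proof is correct and follows essentially the same route as the paper: reduce bijectivity to injectivity (Fredholm index zero plus a rank-one perturbation), then pair $\tilde{\mathcal A}_0\Phi=0$ with $\Psi_0\in\ker\mathcal A_0^*$, using $\langle\Psi_0,\hat\Psi_0\rangle=1$, to force $\langle\Phi,\Phi_0\rangle_X=0$ and hence $\Phi\in\mathrm{span}\{\Phi_0\}\cap\Phi_0^{\perp}=\{0\}$, and read off $\tilde{\mathcal A}_0^*\Psi_0=\Phi_0$ from the same pairing. Your triangular factorization showing that $\mathcal A_0$ is Fredholm of index zero, and your remark that the adjoint of an isomorphism is an isomorphism, fill in two points the paper only asserts (in the factorization the modified second row should read $u-(I-k_s^2\gamma\mathcal N_{0,k_s}\mathcal H)(-\frac12 I+K^*)\varphi$, but this sign slip does not affect the conclusion).
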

\begin{proof}
    $\tilde{\mathcal{A}}_0[\Phi_0]=\hat{\Psi}_0$ is clear. We first prove $\tilde{\mathcal{A}}_0^*[\Psi_0] = \Phi_0$. For any $\Phi \in X$, we have
    \begin{equation}
        \langle \tilde{\mathcal{A}}_0^*[\Psi_0] , \Phi \rangle_X= \langle \Psi_0 , \tilde{\mathcal{A}}_0[\Phi] \rangle_{Y',Y} = \langle \Psi_0 , \mathcal{A}_0[\Phi] \rangle_{Y',Y} + \langle \Phi_0,\Phi \rangle_X \langle \Psi_0 , \hat\Psi_0\rangle_{Y',Y}.
    \end{equation}
    Since $\Psi_0 \in \mathrm{ker}\,\mathcal{A}_0^* =(\mathrm{range} \,\mathcal{A}_0)^\perp$, where $\cdot^\perp$ denotes the Banach annihilator, we get
    \begin{equation}\label{364}
        \langle \Psi_0 , \mathcal{A}_0[\Phi] \rangle_{Y',Y}  =0.
    \end{equation}
    Hence,
    \begin{equation}
        \langle \tilde{\mathcal{A}}_0^*[\Psi_0] , \Phi \rangle_X = \langle \Phi_0,\Phi \rangle_X.
    \end{equation}
    This yields $\tilde{\mathcal{A}}_0^*[\Psi_0] = \Phi_0$.

    It remains to prove $\tilde{\mathcal{A}}_0:X\rightarrow Y$ is a bijection. Since $\mathcal{A}_0$ is a Fredholm operator of index zero and $\mathcal{P}_0$ is a finite-rank perturbation, we only need to show that $\tilde{\mathcal{A}}_0$ is injective. Suppose $\tilde{\mathcal{A}}_0[\Phi] =0$ for some $\Phi \in X$. Pairing with $\Psi_0$, we get
    \begin{equation}
        0=\langle \Psi_0,\mathcal{A}_0 [\Phi] + \mathcal{P}_0 [\Phi]\rangle_{Y',Y} = \langle \Psi_0,\mathcal{A}_0 [\Phi ] \rangle_{Y',Y}+\langle \Phi_0,\Phi\rangle_X
    \end{equation}
    By \eqref{364}, we get $\langle \Phi_0,\Phi\rangle_X =0$. This implies $\mathcal{P}_0[\Phi]=0$ and $\mathcal{A}_0[\Phi]=0$. Consequently, $\Phi \in \mathrm{ker}\,\mathcal{A}_0 = \mathrm{span}\,\{\Phi_0\}$. This, combined with $\langle \Phi_0,\Phi\rangle_X =0$, implies that $\Phi=0$. 
\end{proof}

Our main result in this section is stated in the following theorem, which characterizes the Minnaert-type resonances. Note that in order to obtain approximations in terms of $\delta$ of the resonant frequencies, we replace the nondimensional parameters in our characterization by the physical parameters using 
\eqref{nondmulambda}, \eqref{nondtauk} and \eqref{defkpks} and find the roots of a second-order polynomial in the frequency in the case of a single bubble. The root with a positive real part corresponds to  the physical resonant frequency. This will be illustrated in Section \ref{sec:3} in the case of a spherical bubble. 

\begin{theorem}\label{thm314}
    Assume that $(\gamma,k_s)$ is admissible. For the quasi-static regime, there exist two resonances for a single bubble:
    \begin{equation}\label{minnaertform}
        k_0^\pm(\delta)=\pm \delta^{\frac{1}{2}}\sqrt{ \frac{k_s^2}{\tau^2 |D|} \int_{\partial D} (I - k_s^2 \gamma \mathcal{N}_{0,k_s}\mathcal{H}  )^{-1}\mathcal{N}_{0,k_s}[1] \,dS } + O(\delta).
    \end{equation}
\end{theorem}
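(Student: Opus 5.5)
We follow the Gohberg--Sigal / generalized argument approach of \cite[Theorem 2.5]{ammari2018minnaert}, in the form of a one-dimensional Lyapunov--Schmidt reduction built on Lemma \ref{lemma312}. Write the characteristic value equation $\mathcal{A}(k,\delta)\Phi=0$ as
\begin{equation}
\big(\tilde{\mathcal{A}}_0 + \mathcal{A}_1(k_p) + k^2\mathcal{A}_{2,0}+\delta\mathcal{A}_{0,1} + \mathcal{R}\big)\Phi = \mathcal{P}_0[\Phi],
\end{equation}
where $\mathcal{R}=\mathcal{O}_{X\to Y}(k^3+\delta k)$ is the remainder from \eqref{expansion:A}. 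Since $\tilde{\mathcal{A}}_0$ is invertible by Lemma \ref{lemma312}(a), the operator on the left is invertible for small $k,\delta$, and Neumann series give $\Phi = \langle\Phi,\Phi_0\rangle_X(\tilde{\mathcal{A}}_0+\dots)^{-1}\hat\Psi_0$. Normalizing $\langle\Phi,\Phi_0\rangle_X=1$ and taking the inner product with $\Phi_0$, we obtain the scalar equation
\begin{equation}
\big\langle \Psi_0,\ (\mathcal{A}_1(k_p)+k^2\mathcal{A}_{2,0}+\delta\mathcal{A}_{0,1})[\Phi_0]\big\rangle_{Y',Y} + \text{h.o.t.} = 0 .
\end{equation}
Here we have used $\tilde{\mathcal{A}}_0^{-1}\hat\Psi_0=\Phi_0$ and $(\tilde{\mathcal{A}}_0^{*})^{-1}\Phi_0=\Psi_0$ from Lemma \ref{lemma312}.

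Next we evaluate each pairing. Since $(-\tfrac12 I+K^*)\varphi_0=0$, we have $\mathcal{A}_1(k_p)\Phi_0=0$ exactly, so the $O(k_p)=O(k)$ correction from Corollary \ref{coroNkpks} does not enter at first order. For $\mathcal{A}_{2,0}$ we use the standard identity $\int_{\partial D}K_2^*[\varphi_0]\,dS=-|D|$, valid for $\varphi_0=S^{-1}[1]$ as in \cite{ammari2018minnaert}. Combining it with the adjoint relation $(I-\overline{k_s^2\gamma}\mathcal{H}\mathcal{N}_{0,k_s}^*)\psi_0=1$ gives
\begin{equation}
\langle \Psi_0,\mathcal{A}_{2,0}\Phi_0\rangle = \alpha_0\beta_0\tau^2\big\langle (I-k_s^2\gamma\mathcal{N}_{0,k_s}\mathcal{H})K_2^*\varphi_0,\psi_0\big\rangle = -\alpha_0\beta_0\tau^2|D| .
\end{equation}
For $\mathcal{A}_{0,1}$, using $S\varphi_0=1$, we get
\begin{equation}
\langle \Psi_0,\mathcal{A}_{0,1}\Phi_0\rangle = \alpha_0\beta_0 k_s^2\langle \mathcal{N}_{0,k_s}[1],\psi_0\rangle = \alpha_0\beta_0 k_s^2 \int_{\partial D}(I-k_s^2\gamma\mathcal{N}_{0,k_s}\mathcal{H})^{-1}\mathcal{N}_{0,k_s}[1]\,dS .
\end{equation}
The last equality moves the inverse onto $1$ via the adjoint; this is justified because $I-k_s^2\gamma\mathcal{N}_{0,k_s}\mathcal{H}$ is a bijection by Corollary \ref{lemma39kerA}. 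Collecting the two contributions, the leading scalar equation becomes
\begin{equation}
-\tau^2|D|k^2+\delta k_s^2\int_{\partial D}(I-k_s^2\gamma\mathcal{N}_{0,k_s}\mathcal{H})^{-1}\mathcal{N}_{0,k_s}[1]\,dS + O(k^3+\delta k)=0,
\end{equation}
which yields the two roots \eqref{minnaertform} with an error of order $O(\delta)$.

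The main obstacle is controlling the higher-order terms. The Neumann-series correction contributes terms such as $\langle\Psi_0,\mathcal{A}_1\tilde{\mathcal{A}}_0^{-1}\mathcal{A}_1\Phi_0\rangle$ and $k^2\langle\Psi_0,\mathcal{A}_1(\cdot)\rangle$. We need these to be $O(k^3+\delta k)$, which follows because $\mathcal{A}_1\Phi_0=0$ and $\|\mathcal{A}_1\|=O(k_p)=O(k)$ by Corollary \ref{coroNkpks}. Once this is established, the relation $k^2\approx\delta$ turns the reduced equation into $k^2=\delta c_0+O(\delta^{3/2})$, where $c_0$ is the quantity under the square root in \eqref{minnaertform}.

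To conclude rigorously, we use Rouch\'e's theorem (equivalently, Gohberg--Sigal theory) in the variable $k$ on a small disc around each root. This shows that the actual characteristic values lie within $O(\delta)$ of $\pm\delta^{1/2}\sqrt{c_0}$. This step requires $c_0\neq0$; we will either assume this explicitly or note that it holds generically.
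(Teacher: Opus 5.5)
Your proposal matches the paper's proof. Both use the same one-dimensional Lyapunov--Schmidt reduction through $\tilde{\mathcal{A}}_0=\mathcal{A}_0+\mathcal{P}_0$ and Lemma~\ref{lemma312}, the exact cancellation $\mathcal{A}_1(k_p)\Phi_0=0$, and the same two pairings evaluated via $\int_{\partial D}K_2^*\varphi_0\,dS=-|D|$ and the adjoint equation defining $\psi_0$. Your closing Rouch\'e argument on the scalar reduced equation, and your explicit requirement that the quantity under the square root be nonzero, make rigorous what the paper leaves implicit in ``the desired conclusion follows''; they do not change the route.
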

\begin{proof}
    \textit{Step 1.} We find the resonances by solving the following equation:             \begin{equation}\label{APhieq}
        \mathcal{A}(k,\delta) \Phi_\delta =0.
    \end{equation}         
Since $\mathcal{A}_0 \Phi_0 =0$, we may view $\Phi_\delta$ as a perturbation of $\Phi_0$ and write it as $\Phi_\delta = \Phi_0 + \Phi_1$. In order to uniquely determine $\Phi_1$, we assume that
\begin{equation}\label{orthocond}
    \langle \Phi_1,\Phi_0\rangle_X =0.
\end{equation}

\textit{Step 2.} Since $\tilde{\mathcal{A}}_0 = \mathcal{A}_0 + \mathcal{P}_0$, \eqref{APhieq} is equivalent to the following equation:
\begin{equation}
    ( \tilde{\mathcal{A}}_0 + \mathcal{B} - \mathcal{P}_0 )[\Phi_0 + \Phi_1] =0,
\end{equation}
where $\mathcal{B} := \mathcal{A}(k,\delta)-\mathcal{A}_0$. Observe that since the operator $\tilde{\mathcal{A}}_0 + \mathcal{B}$ is invertible for sufficiently small $k$ and $\delta$, we can apply $(\tilde{\mathcal{A}}_0 + \mathcal{B})^{-1}$ to both sides of the above equation to deduce that
\begin{equation}
    \Phi_1 = (\tilde{\mathcal{A}}_0 + \mathcal{B})^{-1} \mathcal{P}_0 [\Phi_0] - \Phi_0 = (I +\tilde{\mathcal{A}}_0^{-1} \mathcal{B})^{-1} [\Phi_0] - \Phi_0 .
\end{equation}
Using the orthogonality condition \eqref{orthocond}, we get
\begin{equation}
    A(k,\delta):= \big\langle (I +\tilde{\mathcal{A}}_0^{-1} \mathcal{B})^{-1} [\Phi_0] ,\Phi_0 \big\rangle_X -1 =0.
\end{equation}

\textit{Step 3. Now we calculate $A(k,\delta)$.} We have the identity
\begin{equation}
    (I +\tilde{\mathcal{A}}_0^{-1} \mathcal{B})^{-1} [\Phi_0] = \Phi_0 - \tilde{\mathcal{A}}_0^{-1} \mathcal{B} [\Phi_0]+ \tilde{\mathcal{A}}_0^{-1} \mathcal{B}\tilde{\mathcal{A}}_0^{-1} \mathcal{B} [\Phi_0]+ \cdots.
\end{equation}
Note that $\mathcal{B} = \mathcal{B}_0 + \mathcal{A}_1(k_p)$, where
\begin{equation}
    \mathcal{B}_0 := \mathcal{B} - \mathcal{A}_1(k_p) = \mathcal{A}(k,\delta) - \mathcal{A}_0 -  \mathcal{A}_1(k_p) = \mathcal{O}_{X\rightarrow Y}(k^2+\delta),
\end{equation}
and $\mathcal{A}_1(k_p)\Phi_0 =0$. We obtain that
\begin{equation}
    (I +\tilde{\mathcal{A}}_0^{-1} \mathcal{B})^{-1} [\Phi_0] = \Phi_0 - \tilde{\mathcal{A}}_0^{-1} \mathcal{B}_0 [\Phi_0]+ \mathcal{O}_{X} (k^3 + \delta k + \delta^2).
\end{equation}
Consequently, we get
\begin{equation}
    \begin{aligned}
        A(k,\delta)& = - \big\langle \mathcal{B}_0 [\Phi_0] ,\Psi_0 \big\rangle_{Y,Y'} + O(k^3 + \delta k + \delta^2) \\
        & = - k^2\big\langle\mathcal{A}_{2,0}[\Phi_0] ,\Psi_0 \big\rangle_{Y,Y'} - \delta\big\langle  \mathcal{A}_{0,1}  [\Phi_0],\Psi_0 \big\rangle_{Y,Y'} + O(k^3 + \delta k + \delta^2).
    \end{aligned}
\end{equation}
    By the definition of $\mathcal{A}_{2,0}$, we have
    \begin{equation}
        \begin{aligned}
            \big\langle\mathcal{A}_{2,0}[\Phi_0] ,\Psi_0 \big\rangle_{Y,Y'} &=  \alpha_0 \beta_0\tau^2 \Big\{ \big\langle K_2^* \varphi_0, \psi_0 \big\rangle_{L^2} - \big\langle k_s^2 \gamma \mathcal{N}_{0,k_s} \mathcal{H} K_2^* \varphi_0,\psi_0 \big\rangle_{H^{-1},H^1} \Big\} \\
            &=\alpha_0 \beta_0\tau^2   \big\langle K_2^* \varphi_0, (I - \overline{k_s^2 \gamma} \mathcal{H} \mathcal{N}_{0,k_s}^*) \psi_0 \big\rangle_{L^2}  \\
            &=\alpha_0 \beta_0\tau^2 \int_{\partial D} K_2^* \varphi_0\,dS \\
            &=-\alpha_0 \beta_0\tau^2 |D|.
        \end{aligned}
    \end{equation}
    By the definition of $\mathcal{A}_{0,1}$, we have
    \begin{equation}
        \begin{aligned}
            \big\langle\mathcal{A}_{0,1}[\Phi_0] ,\Psi_0 \big\rangle_{Y,Y'} &= \alpha_0 \beta_0 k_s^2\big\langle \mathcal{N}_{0,k_s}[1],\psi_0 \big\rangle_{L^2}  \\
            &=  \alpha_0 \beta_0 k_s^2 \int_{\partial D} (I - k_s^2 \gamma \mathcal{N}_{0,k_s}\mathcal{H}  )^{-1}\mathcal{N}_{0,k_s}[1] \,dS.
        \end{aligned}
    \end{equation}
    The desired conclusion follows.
\end{proof}

\section{Example: spherical bubble} \label{sec:3}

In this section, we consider the particular case in which the physical bubble $D=B_a$ is the ball centered at the origin and with radius $a$. 
%Then, by the rescale \eqref{rescaleD}, the rescaled domain
%\begin{equation}
%    D' = a^{-1} D = B_1
%\end{equation}
%is a unit ball. 

Due to the rotational symmetry of $D$, $k_0^\pm$ can be explicitly calculated.

\begin{lemma}\label{Lem41}
    Let $D=B_a$ in Definition \ref{defnhNtD} of $\mathcal{N}_{0,k_s}$. We have
    \begin{equation}
        \mathcal{N}_{0,k_s}[1]= \frac{a }{ a^2 k_s^2-4}.
    \end{equation}
\end{lemma}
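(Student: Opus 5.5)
The plan is to solve the exterior problem \eqref{uindefDtN1incompres} with $g\equiv 1$ explicitly, using a radial ansatz. Then uniqueness (Lemma \ref{uniquekpzero}, or equivalently the well-posedness in Lemma \ref{lemma35vkpks}) guarantees that this explicit solution is the one defining $\mathcal{N}_{0,k_s}[1]$.

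The ansatz is modeled on the monopole mode $(\mathbf{v}^*_{0,k_s},q^*_{0,k_s})$. I would take
\begin{equation}
  \mathbf{v}(x) = A\,\frac{x}{|x|^3} = -A\nabla \frac{1}{|x|}, \qquad q(x) = -\frac{k_s^2 A}{|x|}, \qquad |x|>a,
\end{equation}
with an unknown constant $A\in\mathbb{C}$. The PDE is checked as follows.
\begin{itemize}
\item Since $\mathbf{v}$ is the gradient of a harmonic function, $\nabla\cdot\mathbf{v}=0$ and $\Delta\mathbf{v}=0$, so $\nabla\cdot(2\nabla^s\mathbf{v}) = \Delta\mathbf{v}+\nabla\nabla\cdot\mathbf{v}=0$.
\item Moreover $\nabla q = -k_s^2A\nabla(1/|x|) = k_s^2\mathbf{v}$.
\item Hence $\nabla\cdot(2\nabla^s\mathbf{v}-q\mathbf{I})+k_s^2\mathbf{v} = -\nabla q+k_s^2\mathbf{v}=0$.
\end{itemize}
For the radiation condition \eqref{radiationlimit}, note that $q=O(r^{-1})$. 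Also $\mathbf{v}_s=\mathbf{v}-k_s^{-2}\nabla q = 0$, so the Sommerfeld condition on $\mathbf{v}_s$ holds trivially.

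Next I compute the traction on $\partial B_a$. We have $\nabla\mathbf{v} = A\big(|x|^{-3}\mathbf{I}-3|x|^{-5}xx^\top\big)$, which is symmetric. Applying it to $\mathbf{n}=x/|x|$ gives $\nabla\mathbf{v}\,\mathbf{n} = -2A|x|^{-3}\mathbf{n}$. Therefore
\begin{equation}
  (2\nabla^s\mathbf{v}-q\mathbf{I})\mathbf{n}\big|_{+} = \Big(-\frac{4A}{a^3}+\frac{k_s^2A}{a}\Big)\mathbf{n} = \frac{A(a^2k_s^2-4)}{a^3}\,\mathbf{n}.
\end{equation}
The traction is purely normal, so the boundary condition with $g=1$ is satisfied exactly when $A=a^3/(a^2k_s^2-4)$. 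This is legitimate because $a^2k_s^2\neq 4$: $k_s^2$ is not a positive real number when $\mathrm{Im}\,k_s>0$.

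Finally, $\mathcal{N}_{0,k_s}[1]=\mathbf{v}\cdot\mathbf{n}|_{\partial B_a} = A/a^2 = a/(a^2k_s^2-4)$. The only point needing care is the uniqueness step: the explicit pair must lie in the class where Lemma \ref{uniquekpzero} applies. This holds because it is smooth up to the boundary and satisfies \eqref{radiationlimit}, so applying Lemma \ref{uniquekpzero} to the difference with any other solution shows that the two coincide. The rest is routine computation.
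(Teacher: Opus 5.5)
Your proposal is correct and is essentially the paper's proof: the same field $\mathbf{v}\propto x/|x|^3$ with $q\propto -k_s^2/|x|$, and the same normal-traction computation (your $A$ is the paper's $a^2M$). The only difference is presentational: the paper derives the ansatz from radial symmetry and incompressibility, whereas you posit it, verify it, and close with uniqueness (Lemma~\ref{uniquekpzero}), which makes explicit the uniqueness step the paper's symmetry argument uses tacitly.
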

\begin{proof}
We consider \eqref{uindefDtN1incompres} for $g=1$ and $D=B_a$. Because the geometry and the boundary traction are spherically symmetric, the solution is radial:
\begin{equation}\label{radial}
    \mathbf{v}(x)=V(|x|)\frac{x}{|x|} .
\end{equation}
The incompressible condition gives
\begin{equation}
    \nabla \cdot \mathbf{v} = \frac{1}{r^2} \frac{d}{dr}  (r^2V(r))  =0.
\end{equation}
Thus, for some $M\in \mathbb{C}$, 
\begin{equation}
    V(r)=\frac{a^2 M}{r^2}, \qquad \mathbf{v}(x)=a^2 M\frac{x}{|x|^3} .
\end{equation}
It is clear that $\mathbf{v}$ is harmonic outside $D$. This, combined with $\nabla \cdot \mathbf{v}=0$, yields
\begin{equation}
    \nabla q(x) = k_s^2 \mathbf{v}(x) = a^2 Mk_s^2\frac{x}{|x|^3} \quad \text{which implies that } \quad q(x)=-\frac{a^2 Mk_s^2}{|x|}.
\end{equation}

Since $g=1$, one has
\begin{equation}
    1= (2\nabla^s\mathbf v-qI)\mathbf n\cdot \mathbf n= \frac{M}{a} (a^2 k_s^2 -4)   \qquad \mathrm{on}  \ \partial B_a.
\end{equation}
Therefore
\begin{equation}\label{computeN}
   \mathcal{N}_{0,k_s}[1] =V(a) = M= \frac{a }{ a^2k_s^2-4}.
\end{equation}
The proof is complete.
\end{proof}

\begin{proposition}
    Assume that $(\gamma,k_s)$ is admissible. Let $D=B_a$ in \eqref{nondfinal1}. We have
    \begin{equation}\label{resonanceforsphere}
        k_0^\pm = \pm \delta^{\frac{1}{2}}\sqrt{\frac{3}{\tau^2}   \frac{a k_s^2}{k_s^2(a^3+2\gamma)-4 a}}+ O(\delta) .
    \end{equation}
\end{proposition}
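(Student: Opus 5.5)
The plan is to specialize formula \eqref{minnaertform} of Theorem \ref{thm314} to $D=B_a$. By rotational symmetry, every operator appearing in it maps constants to constants, so the inverse $(I-k_s^2\gamma\mathcal N_{0,k_s}\mathcal H)^{-1}$ applied to a constant reduces to division by a scalar.

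\emph{Step 1: the action of $\mathcal N_{0,k_s}$ and $\mathcal H$ on constants.} By Lemma \ref{Lem41} and linearity (take $g=c$ constant), $\mathcal N_{0,k_s}[c]=c\,m$ with
\begin{equation}
m:=\frac{a}{a^2k_s^2-4}.
\end{equation}
On $\partial B_a$ the Laplace--Beltrami operator annihilates constants. Both principal curvatures equal $1/a$, so $|B|^2=2/a^2$. Hence, by \eqref{Hf}, $\mathcal H[c]=-\frac{2}{a^2}c$.

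\emph{Step 2: inverting $T=I-k_s^2\gamma\mathcal N_{0,k_s}\mathcal H$ on constants.} For constant $c$, Step 1 gives
\begin{equation}
T[c]=\Big(1+\frac{2k_s^2\gamma m}{a^2}\Big)c.
\end{equation}
Admissibility of $(\gamma,k_s)$ makes $T$ a bijection by Corollary \ref{lemma39kerA}. Since $T$ maps constants to constants, uniqueness forces the scalar factor above to be nonzero. It follows that $T^{-1}[m]$ is the constant
\begin{equation}
c_*=\frac{m}{1+2k_s^2\gamma m/a^2}.
\end{equation}

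\emph{Step 3: substitution into \eqref{minnaertform}.} With $|\partial B_a|=4\pi a^2$ and $|B_a|=\frac{4}{3}\pi a^3$,
\begin{equation}
\frac{1}{|D|}\int_{\partial D}T^{-1}\mathcal N_{0,k_s}[1]\,dS=\frac{3c_*}{a}.
\end{equation}
Next,
\begin{equation}
\frac{a}{c_*}=\frac{a}{m}+\frac{2k_s^2\gamma}{a}=a^2k_s^2-4+\frac{2k_s^2\gamma}{a},
\end{equation}
and therefore
\begin{equation}
\frac{3c_*}{a}=\frac{3}{a^2k_s^2-4+2k_s^2\gamma/a}=\frac{3a}{k_s^2(a^3+2\gamma)-4a}.
\end{equation}
Multiplying by $k_s^2/\tau^2$ and taking square roots in \eqref{minnaertform} gives \eqref{resonanceforsphere}. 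The $O(\delta)$ remainder is inherited directly from Theorem \ref{thm314}.

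The only delicate point is Step 2. The claim that the inverse of $T$ maps the constant $m$ to a constant rests on $T$ preserving constants together with injectivity, and this is exactly where admissibility is used. The remaining work is a routine geometric computation of $|B|^2$ on the sphere and simple algebra.
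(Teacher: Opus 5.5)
Your proposal is correct and follows essentially the same route as the paper. Both arguments reduce $(I-k_s^2\gamma\mathcal N_{0,k_s}\mathcal H)^{-1}$ to a scalar division on constants, using Lemma~\ref{Lem41} and $\mathcal H[1]=-2/a^2$, and then substitute the result into \eqref{minnaertform}; your reason that the inverse sends a constant to a constant ($T$ preserves constants, and injectivity rules out a zero scalar factor) is slightly more explicit than the paper's appeal to spherical symmetry.
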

\begin{remark}
    The admissible condition for $(\gamma,k_s)$ guarantees $k_s^2(a^3+2\gamma)-4 a \neq 0$.
\end{remark}
\begin{proof}
    By the spherical symmetry of $D=B_a$, the function
    \begin{equation}
         (I - k_s^2 \gamma \mathcal{N}_{0,k_s}\mathcal{H}  )^{-1}[1] = M
    \end{equation}
    is a constant. Since $\mathcal{H} [1] = -|B|^2 = -2/a^2$, we get
    \begin{equation}
        (I - k_s^2 \gamma \mathcal{N}_{0,k_s}\mathcal{H}  )[M] =M \frac{k_s^2(a^3+2\gamma)-4a}{a(a^2k_s^2 -4)} = 1,
    \end{equation}
    where we used Lemma \ref{Lem41}. Therefore, 
    \begin{equation}
        M = \frac{a(a^2k_s^2 -4)}{k_s^2(a^3+2\gamma)-4a}  .
    \end{equation}
    Hence, by Theorem \ref{thm314} and Lemma \ref{Lem41}, we get
\begin{equation}
    \begin{aligned}
        k_0^\pm(\delta) &=\pm \delta^{\frac{1}{2}}\sqrt{ \frac{k_s^2}{\tau^2 |D|} \int_{\partial D} (I - k_s^2 \gamma \mathcal{N}_{0,k_s}\mathcal{H}  )^{-1}\mathcal{N}_{0,k_s}[1] \,dS } + O(\delta) \\
        &= \pm \delta^{\frac{1}{2}}\sqrt{\frac{3}{\tau^2}   \frac{a k_s^2}{k_s^2(a^3+2\gamma)-4 a}}+ O(\delta),
    \end{aligned}
    \end{equation}
    where we used $|D|= \frac{4\pi}{3} a^3$ and $|\partial D| = 4\pi a^2$.
    The proof is complete.
\end{proof}

As noted before Theorem \ref{thm314}, we now replace the nondimensional parameters in formula \eqref{resonanceforsphere} by the physical parameters using \eqref{nondmulambda}, \eqref{nondtauk} and \eqref{defkpks}. 
Denote 
\begin{equation}
   k_M = \delta^{\frac{1}{2}}\sqrt{\frac{3}{\tau^2}   \frac{a k_s^2}{k_s^2(a^3+2\gamma)-4 a}}
\end{equation}
and the corresponding physical resonant frequency by $\omega_M$ given by
\begin{equation}
    \omega_M := \sqrt{\frac{\kappa}{\rho}} k_M. 
\end{equation}
Note that $\omega_M$ is the Minnaert frequency of the ball $B_a$. By the following substitutions:
\begin{equation}
    k_s^2 = \frac{\rho \omega_M}{\mathrm{i}\mu}, \qquad \gamma = \frac{\sigma}{\rho\omega_M^2}
\end{equation}
and a simple computation, one gets
\begin{equation}
    \rho a^3 \omega_M^2 -4\mathrm{i}\mu a \omega_M + \Big( 2\sigma-3 \frac{\delta }{\tau^2} a \kappa \Big) =0.
\end{equation}
Solving the quadratic equation and denoting $v = \sqrt{\rho/\kappa}$, we get two roots
\begin{equation}\label{physicalballformula}
    \omega_M^{\pm} = \pm \frac{1}{a}\sqrt{ \frac{3}{\tau^2 v^2} \delta - \frac{2\sigma}{\rho a} -\frac{4\mu^2}{\rho^2 a^2}} + \mathrm{i}\frac{2\mu}{\rho a^2}.
\end{equation}
The real part of the oscillation frequency with positive real part is
\begin{equation}
    \mathrm{Re}\,\omega^+_M
    = \frac{1}{a}\sqrt{ \frac{3}{\tau^2 v^2} \delta - \frac{2\sigma}{\rho a} -\frac{4\mu^2}{\rho^2 a^2}} .
\end{equation}
This is exactly the formula given in \cite[(47)]{review-bubbles}. When $\sigma=0$ and $\mu=0$, this reduces to the classical Minnaert angular frequency \cite{ammari2018minnaert}.

\section{Collective behavior of micro-bubbles} \label{sec:4}

In this section, we present a resonant characterization for a system of several micro-bubbles. 
Consider $N$ identical well-separated bubbles $D_i, i=1,\dots, N.$ We denote by $D= \bigcup_{i=1}^N D_i$ and let $K^{*}$ and $S$ be, respectively, the Neumann-Poincar\'e operator and the single-layer potential associated with $D$. Then 
$$
    \mathrm{dim}\, \mathrm{ker}\,(-\frac{1}{2}I + K^{*}) =N. 
$$
Moreover, 
$$
    \mathrm{ker}\,(-\frac{1}{2}I + K^{*}) = \mathrm{span} \{ \varphi_1, \varphi_2,\dots, \varphi_N \},  
$$
where 
$$ \varphi_i := S^{-1}[\chi_{\partial D_i}],$$
and $\chi_{\partial D_i}$ denotes the characteristic function of $\partial D_i$, $i=1,\dots, N$; see \cite{ammari2025mathematical}.
The admissible condition for the pair $(\gamma,k_s)$ is the same as Definition \ref{def:admissible}.

Let the capacitance matrix $C=(C_{ij})_{i,j=1}^N$ be defined by 
\begin{equation}
    C_{ij}:=  \frac{k_s^2}{\tau^2 |D_j|} \int_{\partial D_j} (I - k_s^2 \gamma \mathcal{N}_{0,k_s}\mathcal{H} )^{-1}\mathcal{N}_{0,k_s}[\chi_{\partial D_i}] \,dS.
\end{equation}
% $$
% C_{ij}:=  \frac{k_s^2}{\tau^2 |D_j|} \int_{\partial D_j} (I - k_s^2 \gamma \mathcal{N}_{0,k_s}\mathcal{H}_{D_j}  )^{-1}\mathcal{N}_{0,k_s}[\chi_{\partial D_i}] \,dS, 
% $$
% where $\mathcal{H}_{D_j}$ is defined analogously to \eqref{Hf} with $D=D_j$.

Following the same arguments as those in the proof of Theorem \ref{thm314}, the following result holds.

\begin{theorem}\label{thm51}
    Assume that $(\gamma,k_s)$ is admissible and the capacitance matrix $C$ is diagonalizable. In the quasi-static regime, there exist $2N$ resonant frequencies for a collection of $N$ well-separated identical bubbles, which can be characterized as follows: 
    \begin{equation}
        k_j^\pm(\delta)=\pm \delta^{\frac{1}{2}}\sqrt{\lambda_j} + O(\delta), \quad j=1,\dots, N,
    \end{equation}
    where $\lambda_j$ is an eigenvalue of $C$. The $N$ physical resonances are those with positive real parts. 
\end{theorem}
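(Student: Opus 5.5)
We follow the same route as the proof of Theorem \ref{thm314}, now with an $N$-dimensional kernel. The nhNtD operators and $\mathcal{H}$ are taken with respect to $D=\bigcup_i D_i$, and $\mathcal{A}(k,\delta)$ is defined exactly as in \eqref{eqofA}.

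\emph{Kernel and co-kernel.} Under admissibility, $I-k_s^2\gamma\mathcal{N}_{0,k_s}\mathcal{H}$ is a bijection (Lemma \ref{lemma39bijection}, whose proof carries over verbatim to the disconnected $D$). Hence, as in Corollary \ref{lemma39kerA}, we get
\[
\mathrm{ker}\,\mathcal{A}_0=\mathrm{span}\{\Phi_i\},\qquad \Phi_i=(\varphi_i,0)^\top .
\]
Since $\big(\mathrm{range}(-\tfrac12 I+K^*)\big)^\perp=\mathrm{span}\{\chi_{\partial D_j}\}$, the same pairing computation gives
\[
\mathrm{ker}\,\mathcal{A}_0^*=\mathrm{span}\{\Psi_j\},\qquad \Psi_j=(\psi_j,-\psi_j)^\top,\qquad \psi_j=\big(I-\overline{k_s^2\gamma}\,\mathcal{H}\mathcal{N}_{0,k_s}^*\big)^{-1}[\chi_{\partial D_j}].
\]

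\emph{Lyapunov--Schmidt reduction.} We replace $\mathcal{P}_0$ by a rank-$N$ operator $\mathcal{P}[\Phi]=\sum_i\langle\Phi,\tilde\Phi_i\rangle_X\hat\Psi_i$, where $\{\tilde\Phi_i\}$ is biorthogonal to $\{\Phi_i\}$ after normalizing so that $\langle\Psi_j,\hat\Psi_i\rangle=\delta_{ij}$ (a Gram-matrix adjustment). The argument of Lemma \ref{lemma312} then shows that $\tilde{\mathcal{A}}_0=\mathcal{A}_0+\mathcal{P}$ is bijective and $\tilde{\mathcal{A}}_0^{-1}\hat\Psi_i=\Phi_i$. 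Writing $\Phi_\delta=\sum_i x_i\Phi_i+\Phi'$ with $\Phi'$ orthogonal to $\{\tilde\Phi_i\}$ and arguing as in Steps 2--3, the equation $\mathcal{A}(k,\delta)\Phi_\delta=0$ reduces to the $N\times N$ system
\[
\sum_i \big\langle \mathcal{B}_0\Phi_i,\Psi_j\big\rangle\, x_i + O\big(k^3+\delta k+\delta^2\big)|x|=0 .
\]
Here $\mathcal{A}_1(k_p)\Phi_i=0$ again, since the first component vanishes on each $\Phi_i$ through $(-\tfrac12I+K^*)\varphi_i=0$.

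\emph{Computing the matrix.} The computation of Step 3 gives
\[
\langle\mathcal{A}_{2,0}\Phi_i,\Psi_j\rangle=\tau^2\!\int_{\partial D_j}K_2^*\varphi_i\,dS=-\tau^2|D_i|\,\delta_{ij},
\]
using that $\int_{\partial D_j}K_2^*\varphi_i$ is the volume term localized on component $j$. Similarly,
\[
\langle\mathcal{A}_{0,1}\Phi_i,\Psi_j\rangle=k_s^2\int_{\partial D_j}(I-k_s^2\gamma\mathcal{N}_{0,k_s}\mathcal{H})^{-1}\mathcal{N}_{0,k_s}[\chi_{\partial D_i}]\,dS,
\]
since $S\varphi_i=\chi_{\partial D_i}$. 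Because the bubbles are identical, $|D_i|=|D_j|$, and the system becomes
\[
\tau^2|D|_1\big(k^2 I-\delta\, C^\top\big)x=O\big(k^3+\delta k+\delta^2\big)|x|,
\]
where $|D|_1$ denotes the common volume. Thus the determinant condition is $\det(k^2I-\delta C^\top+\text{h.o.t.})=0$.

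\emph{Root extraction.} We rescale $k=\delta^{1/2}z$ and divide by $\delta$. This gives $\det\big(z^2I-C^\top+O(\delta^{1/2})\big)=0$, and the diagonalizability of $C$, together with a Gohberg--Sigal (Rouché) argument or matrix perturbation, yields $2N$ roots $z=\pm\sqrt{\lambda_j}+O(\delta^{1/2})$, i.e. $k_j^\pm=\pm\delta^{1/2}\sqrt{\lambda_j}+O(\delta)$.

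\emph{Main obstacle.} The delicate step is this last one. If eigenvalues of $C$ coincide, a generic $O(\delta^{1/2})$ matrix perturbation only moves the roots by $O(\delta^{1/4})$. Diagonalizability is precisely what restores the $O(\delta^{1/2})$ shift in $z$, via a Bauer--Fike type bound. We would also need to check that the error term is analytic in $z$, so that the counting of characteristic values (multiplicity $2N$ near $z^2=\lambda_j$) is justified. It also requires $\lambda_j\neq0$, or else an adjustment of the argument.
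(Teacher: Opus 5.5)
Your proposal is correct and is essentially the paper's proof. It uses the same Lyapunov--Schmidt reduction via a rank-$N$ correction $\tilde{\mathcal A}_0=\mathcal A_0+\mathcal P$; your biorthogonal normalization is equivalent to the paper's projection built from $G^{-1}$ and $M^{-1}$. It also uses the same observation $\mathcal A_1(k_p)\Phi_i=0$ and the same pairings with $\mathcal A_{2,0}$ and $\mathcal A_{0,1}$, leading to $k^2I-\delta C^\top$ up to $O(k^3+\delta k+\delta^2)$.

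Your root-extraction step is more careful than the paper's, which simply concludes from $\mathbf a(k^2I-\delta C)=O(k^3+\delta k+\delta^2)$. You correctly identify that diagonalizability enters through a Bauer--Fike bound, that a Rouch\'e/Gohberg--Sigal count gives existence of the $2N$ roots, and that the case $\lambda_j=0$ needs the finer error structure rather than the crude $O(\delta^{1/2})$ bound.
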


Before proving Theorem \ref{thm51}, we need some preparations. For $j=1,\cdots,N$, define
    \begin{equation}
        \Phi_j := \alpha_j\begin{pmatrix}
            \varphi_j \\ 0
        \end{pmatrix}, \qquad \alpha_j := \frac{1}{\| \varphi_j \|_{L^2(\partial D)}}.
    \end{equation}
    Similarly to Corollary \ref{lemma39kerA}, we have the following result.

\begin{lemma}
    Assume that $(\gamma,k_s)$ is admissible. Then
    \begin{equation}
        I - k_s^2 \gamma \mathcal{N}_{0,k_s}\mathcal{H} : L^2(\partial D) \rightarrow H^{-1}(\partial D)
    \end{equation}
    is a bijection. We have
    \begin{equation}
  \mathrm{ker}\,\mathcal{A}_0= \mathrm{span}\,\{\Phi_1,\cdots,\Phi_N \}, \qquad \mathrm{ker}\, \mathcal{A}_0^* = \mathrm{span}\,\{\Psi_1,\cdots,\Psi_N\}.
\end{equation}
Here, for each $j = 1,\cdots,N$, $\Psi_j$ is defined by
\begin{equation}
    \Psi_j  = \beta_j\begin{pmatrix}
            \psi_j \\ - \psi_j
        \end{pmatrix}\in Y' := L^2(\partial D) \times H^1(\partial D), \qquad \beta_j := \frac{1}{\sqrt{2} \| \psi_j \|_{L^2(\partial D)}},
\end{equation}
where $\psi_j = (I - \overline{k_s^2 \gamma} \mathcal{H} \mathcal{N}_{0,k_s}^*)^{-1}[\chi_{\partial D_j}]$ and $\beta_j$ is a constant chosen such that $\|\Psi_j\|_{L^2(\partial D) \times L^2(\partial D)}=1$.
\end{lemma}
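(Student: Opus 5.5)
The plan mirrors Lemma \ref{lemma39bijection} and Corollary \ref{lemma39kerA}, now for the multiply connected boundary $\partial D=\bigcup_{i=1}^N\partial D_i$.

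\textbf{Step 1: bijectivity.} Write $T:=I-k_s^2\gamma\mathcal N_{0,k_s}\mathcal H$. The symbol argument of Lemma \ref{lemma39bijection} is purely local. On each component, $\mathcal N_{0,k_s}$ is elliptic of order $-1$ with principal symbol $-1/(2|\xi|_g)$, and the interaction between different components contributes only smoothing terms. Hence $\Lambda^{-1}T=\frac{k_s^2\gamma}{2}I+$compact on $L^2(\partial D)$, so $T$ is Fredholm of index zero. Injectivity is exactly the non-resonance condition \eqref{assumption:nonresonance-A0}, which is part of admissibility. We read Definition \ref{def:admissible} for the union $D$, with $E_\gamma$ the countable exceptional set produced by the same analytic Fredholm argument. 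That argument carries over verbatim: the energy identity \eqref{identitylimit2} over $B_R\setminus\overline D$ is unchanged, since $\mathcal H$ is still self-adjoint on $\partial D$. Injectivity together with index zero gives bijectivity.

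\textbf{Step 2: $\ker\mathcal A_0$.} If $\mathcal A_0(\varphi,\psi)^\top=0$, the first row gives $\psi=(-\frac12I+K^*)\varphi$ and the second gives $T\psi=0$. By Step 1, $\psi=0$, so $\varphi\in\ker(-\frac12 I+K^*)=\mathrm{span}\{\varphi_1,\dots,\varphi_N\}$. Conversely, each $\Phi_j$ lies in the kernel. The $\varphi_j$ are linearly independent because $S$ is invertible and the $\chi_{\partial D_j}$ are independent. Hence $\ker\mathcal A_0=\mathrm{span}\{\Phi_1,\dots,\Phi_N\}$.

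\textbf{Step 3: $\ker\mathcal A_0^*$.} For $(\varphi,\psi)\in X$, repeat the pairing computation of Corollary \ref{lemma39kerA} with $(\psi_j,-\psi_j)^\top$. This gives
\begin{equation}
\Big\langle \mathcal A_0\begin{pmatrix}\varphi\\ \psi\end{pmatrix},\begin{pmatrix}\psi_j\\ -\psi_j\end{pmatrix}\Big\rangle=\Big\langle \big(-\tfrac12I+K^*\big)\varphi,\ T^*\psi_j\Big\rangle_{L^2}=\Big\langle \big(-\tfrac12I+K^*\big)\varphi,\ \chi_{\partial D_j}\Big\rangle_{L^2}.
\end{equation}
This vanishes because the range of $-\frac12I+K^*$ on the multiply connected boundary is the annihilator of $\ker(-\frac12I+K)$. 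That kernel is $\mathrm{span}\{\chi_{\partial D_1},\dots,\chi_{\partial D_N}\}$: each $\chi_{\partial D_i}$ is killed by $-\frac12 I+K$, since its double layer potential equals $-\chi_{D_i}$ inside, and the kernel has dimension $N$ by the Fredholm alternative. Consequently every $\Psi_j$ lies in $\ker\mathcal A_0^*$.

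For these $\psi_j$ to be well defined, we need $T^*=I-\overline{k_s^2\gamma}\mathcal H\mathcal N_{0,k_s}^*:H^1(\partial D)\to L^2(\partial D)$ to be bijective. This follows from Step 1 by duality, since the adjoint of a bijection is a bijection. The $\Psi_j$ are linearly independent because the $\psi_j=(T^*)^{-1}\chi_{\partial D_j}$ are.

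\textbf{Dimension count.} Since $\mathcal A_0$ is Fredholm of index zero, $\dim\ker\mathcal A_0^*=\dim\ker\mathcal A_0=N$, which gives the reverse inclusion. Index zero holds because the first row is a Fredholm operator $-\frac12I+K^*$ up to a bounded triangular structure. More precisely, after the invertible row operation of subtracting row one from row two, $\mathcal A_0$ becomes upper triangular with diagonal blocks $-\frac12 I+K^*$ on $L^2$ and $T$, up to a bounded off-diagonal block. Both diagonal blocks have index zero.

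\textbf{Main obstacle.} I expect the main care to be needed in Step 1, namely confirming that the analytic Fredholm argument and the principal-symbol computation survive with several components. In particular, the cross-component parts of $\mathcal N_{0,k_s}$ must have smooth kernels. I would justify this from the layer potential representation of Lemma \ref{lemma35vkpks}, using that the kernels of $\mathbf S^{0,k_s}$ and $\mathbf K^{0,k_s,*}$ are smooth off the diagonal.
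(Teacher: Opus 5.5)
Your proof is correct and follows the paper's route, which simply repeats Corollary~\ref{lemma39kerA} for the union $D$. You get bijectivity of $T$ from admissibility, $\ker\mathcal A_0$ from the two-row reduction, and $\Psi_j\in\ker\mathcal A_0^*$ from the pairing together with $(\mathrm{range}(-\frac12I+K^*))^\perp=\mathrm{span}\{\chi_{\partial D_1},\dots,\chi_{\partial D_N}\}$, and you also spell out the well-definedness of $\psi_j$ and the index-zero dimension count that the paper leaves implicit.

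One small slip: subtracting row one from row two gives $\begin{pmatrix}-\frac12I+K^* & -I\\ -T(-\frac12I+K^*) & 0\end{pmatrix}$, which is not triangular with diagonal blocks $-\frac12I+K^*$ and $T$. If you reorder the unknowns as $(\psi,\varphi)$, it becomes upper triangular with diagonal blocks $-I$ and $-T(-\frac12I+K^*)$, whose index is $\mathrm{ind}\,T+\mathrm{ind}(-\frac12I+K^*)=0$, so your conclusion stands.
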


We need to define a projection similar to \eqref{projection0}. We define the Gram matrices $G = (G_{ij})_{i,j=1}^N$ and $M=(M_{ij})_{i,j=1}^N$ by
\begin{equation}
    G_{ij}:= \langle \Phi_i,\Phi_j \rangle_X, \qquad M_{ij}:= \langle \Psi_i , \hat\Psi_j \rangle_{Y',Y}.
\end{equation}
It is clear that $G$ and $M$ are invertible. Define a projection $\mathcal{P}:X\rightarrow Y$ by
\begin{equation}
    \mathcal{P}[\Phi ] := \sum_{i,j,\ell=1}^N\langle \Phi,\Phi_i\rangle_X(G^{-1})_{ij} (M^{-1})_{j\ell} \hat\Psi_\ell ,
\end{equation}
and let
\begin{equation}
    \tilde{\mathcal{A}}_0 := \mathcal{A}_0 + \mathcal{P}.
\end{equation}
The following lemma is a multi-bubble version of Lemma \ref{lemma312}.

\begin{lemma}
    Assume that $(\gamma,k_s)$ is admissible. We have
    \begin{itemize}
        \item[(a)] The operator $\tilde{\mathcal{A}}_0$ is a bijection from $X$ to $Y$. Moreover, for every $j=1,\cdots,N$, 
        \begin{equation}\label{Aphiformula}
            \tilde{\mathcal{A}}_0[\Phi_j]= \sum_{\ell=1}^N (M^{-1})_{j\ell} \hat\Psi_\ell ;
        \end{equation}
        \item[(b)] $\tilde{\mathcal{A}}_0^*$, the adjoint of $\tilde{\mathcal{A}}_0$, is a bijection from $Y'$ to $X'=X$. Moreover, \begin{equation}\label{formulaAstar}
            \tilde{\mathcal{A}}_0^*[\Psi_j] = \sum_{i=1}^N (G^{-1})_{ji} \Phi_i.
        \end{equation}
    \end{itemize}
\end{lemma}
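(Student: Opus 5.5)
The plan follows the proof of Lemma \ref{lemma312} closely; the only new ingredient is the bookkeeping with the Gram matrices $G$ and $M$. First I will record that $G$ and $M$ are invertible. The functions $\varphi_1,\dots,\varphi_N$ have disjoint supports $\partial D_1,\dots,\partial D_N$, so the $\Phi_j$ are linearly independent and $G$ is a positive definite Gram matrix.

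For $M$, I will use that the duality pairing $\langle \Psi,\iota\Psi'\rangle_{Y',Y}$ coincides with the $L^2(\partial D)\times L^2(\partial D)$ inner product whenever both entries lie in $Y'$. Hence $M$ is the Gram matrix of $\Psi_1,\dots,\Psi_N$ in $L^2\times L^2$. These vectors are linearly independent because $I-\overline{k_s^2\gamma}\mathcal{H}\mathcal{N}_{0,k_s}^*$ is injective (by admissibility, as in Corollary \ref{lemma39kerA}) and the $\chi_{\partial D_j}$ are independent. So $M$ is invertible as well.

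\textbf{Formula \eqref{Aphiformula}.} Since $\mathcal{A}_0[\Phi_k]=0$, only the projection contributes. With $\langle \Phi_k,\Phi_i\rangle_X=G_{ki}$, we get
\begin{equation}
\mathcal{P}[\Phi_k]=\sum_{i,j,\ell}G_{ki}(G^{-1})_{ij}(M^{-1})_{j\ell}\hat\Psi_\ell=\sum_\ell (M^{-1})_{k\ell}\hat\Psi_\ell .
\end{equation}

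\textbf{Formula \eqref{formulaAstar}.} For any $\Phi\in X$ I will write
\begin{equation}
\langle \tilde{\mathcal{A}}_0^*[\Psi_j],\Phi\rangle_X=\langle\Psi_j,\mathcal{A}_0[\Phi]\rangle_{Y',Y}+\langle \Psi_j,\mathcal{P}[\Phi]\rangle_{Y',Y}.
\end{equation}
The first term vanishes because $\Psi_j\in\ker\mathcal{A}_0^*=(\mathrm{range}\,\mathcal{A}_0)^\perp$. In the second term the factor $\langle\Psi_j,\hat\Psi_\ell\rangle=M_{j\ell}$ appears. Contracting it against $(M^{-1})_{k\ell}$ should give a Kronecker delta, leaving $\sum_i(G^{-1})_{ji}\langle\Phi_i,\Phi\rangle_X$, which is \eqref{formulaAstar}.

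The only delicate point, and the place I expect to need the most care, is that the pairings are sesquilinear. One has to track which slot is conjugate-linear so that the index order in $\mathcal{P}$ (namely $G^{-1}$ followed by $M^{-1}$) really produces $M M^{-1}=I$ rather than a transpose or a conjugate. If a transpose appears, the definition of $M$ (or $G$) must be read with the matching index convention. I will fix the convention that $\langle\cdot,\cdot\rangle$ is linear in the first slot and conjugate-linear in the second, and verify the contraction explicitly.

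\textbf{Bijectivity of $\tilde{\mathcal{A}}_0$.} $\mathcal{A}_0$ is Fredholm of index zero from $X$ to $Y$, and $\mathcal{P}$ has finite rank, so it suffices to prove injectivity. Suppose $\tilde{\mathcal{A}}_0[\Phi]=0$, and pair this equation with each $\Psi_m$. The $\mathcal{A}_0$ part drops out, leaving
\begin{equation}
\sum_{i,j,\ell}\langle\Phi,\Phi_i\rangle_X(G^{-1})_{ij}(M^{-1})_{j\ell}M_{m\ell}=0 \quad \text{for all } m.
\end{equation}
Since $M$ and $G$ are invertible, this forces $\langle\Phi,\Phi_i\rangle_X=0$ for every $i$. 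Then $\mathcal{P}[\Phi]=0$, so $\mathcal{A}_0[\Phi]=0$ and $\Phi\in\mathrm{span}\{\Phi_1,\dots,\Phi_N\}$. Being orthogonal to all the $\Phi_i$, it must vanish.

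\textbf{Bijectivity of $\tilde{\mathcal{A}}_0^*$.} Since $\tilde{\mathcal{A}}_0:X\to Y$ is a bounded bijection, it is an isomorphism by the open mapping theorem. Its Banach adjoint $Y'\to X'=X$ is therefore also an isomorphism, which completes (b).
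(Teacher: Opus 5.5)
Your argument follows the paper's proof step for step. $\mathcal{P}[\Phi_k]$ collapses through $GG^{-1}=I$, the adjoint formula comes from $\Psi_j\in(\mathrm{range}\,\mathcal{A}_0)^\perp$ plus a contraction against $M$, and bijectivity of $\tilde{\mathcal{A}}_0$ is Fredholm index zero plus injectivity obtained by pairing with the $\Psi_m$. Your open-mapping remark for $\tilde{\mathcal{A}}_0^*$ supplies what the paper leaves implicit.

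One supporting claim is false, however: the densities $\varphi_j=S^{-1}[\chi_{\partial D_j}]$ do \emph{not} have disjoint supports. Here $S$ is the single-layer operator of the whole boundary $\partial D=\bigcup_i\partial D_i$, so $S^{-1}[\chi_{\partial D_j}]$ is spread over all components. A density living only on $\partial D_j$ with potential $1$ there would produce the capacitary potential of $D_j$. That potential is strictly positive on $\partial D_i$ for $i\neq j$, not zero. This coupling is exactly what makes the off-diagonal capacitance entries nonzero. The conclusion you need is still true, by the argument you gave for the $\psi_j$: if $\sum_j c_j\varphi_j=0$, applying $S$ gives $\sum_j c_j\chi_{\partial D_j}=0$, so all $c_j=0$. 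Hence the $\Phi_j$ are independent and $G$ is positive definite.

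The sesquilinear bookkeeping you deferred does close, and what makes it close is the Hermitian symmetry of the two Gram matrices, which you already have. Your convention is in fact forced, since $\mathcal{P}$ must be linear in $\Phi$. With it,
\[
\langle\Psi_j,\mathcal{P}[\Phi]\rangle_{Y',Y}=\sum_{i,k,\ell=1}^N\langle\Phi_i,\Phi\rangle_X\,\overline{(G^{-1})_{ik}}\;\overline{(M^{-1})_{k\ell}}\;M_{j\ell}.
\]
Using $\overline{(G^{-1})_{ik}}=(G^{-1})_{ki}$ and $\overline{(M^{-1})_{k\ell}}=(M^{-1})_{\ell k}$, this becomes precisely the paper's expression, in which $\sum_\ell M_{j\ell}(M^{-1})_{\ell k}=\delta_{jk}$ appears. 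No reinterpretation of the index conventions of $G$ or $M$ is needed.

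The same conjugations are missing from your displayed injectivity identity. Once inserted, it reads $\sum_i (G^{-1})_{mi}\langle\Phi_i,\Phi\rangle_X=0$ for all $m$, and your conclusion $\langle\Phi,\Phi_i\rangle_X=0$ is unaffected.
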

\begin{proof}
    The identity \eqref{Aphiformula} is clear. We then prove \eqref{formulaAstar}. For any $\Phi \in X$, we have
    \begin{equation}
    \begin{aligned}
        \langle \tilde{\mathcal{A}}_0^*[\Psi_j] , \Phi \rangle_X &= \langle \Psi_j , \tilde{\mathcal{A}}_0[\Phi] \rangle_{Y',Y} \\
        &= \langle \Psi_j , \mathcal{A}_0[\Phi] \rangle_{Y',Y} + \sum_{i,k,\ell=1}^N \langle \Phi_i,\Phi \rangle_X (G^{-1})_{ki} (M^{-1})_{\ell k}\langle \Psi_j , \hat\Psi_\ell \rangle_{Y',Y} \\
        &=\sum_{i,k,\ell=1}^N \langle \Phi_i,\Phi \rangle_X (G^{-1})_{ki} (M^{-1})_{\ell k} M_{j\ell} \\
        &= \sum_{i=1}^N \langle \Phi_i,\Phi \rangle_X (G^{-1})_{ji}.
    \end{aligned}
    \end{equation}
    Here, we used $\Psi_j\in \mathrm{ker}\,\mathcal{A}_0^* =(\mathrm{range} \,\mathcal{A}_0)^\perp$, where $\cdot^\perp$ denotes the Banach annihilator. This proves \eqref{formulaAstar}. It remains to prove that $\tilde{\mathcal{A}}_0:X\rightarrow Y$ is a bijection. Since $\mathcal{A}_0$ is a Fredholm operator of index zero and $\mathcal{P}$ is a finite-rank perturbation, we only need to show that $\tilde{\mathcal{A}}_0$ is injective. Suppose that $\tilde{\mathcal{A}}_0[\Phi] =0$ for some $\Phi \in X$. Pairing with $\Psi_j$, we get
    \begin{equation}\label{equationG-1}
        0=\langle \Psi_j,\mathcal{A}_0 [\Phi] + \mathcal{P} [\Phi]\rangle_{Y',Y} = \sum_{i=1}^N (G^{-1})_{ji} \langle \Phi_i,\Phi\rangle_X, \qquad \forall j=1\cdots,N.
    \end{equation}
   This implies $\mathcal{P}[\Phi]=0$ and $\mathcal{A}_0[\Phi]=0$. Therefore, $\Phi =c_1\Phi_1+\cdots+c_N\Phi_N$ for some $c_1,\cdots,c_N \in \mathbb{C}$. Substituting this into \eqref{equationG-1}, we get $c_j=0$ for every $j$. This implies $\Phi=0$. 
\end{proof}

We are in a position to prove Theorem \ref{thm51}.

\begin{proof}
We divide the proof into several steps.

\textit{Step 1.} In the same way as in Theorem \ref{thm314}, we find the resonances by solving the following equation:             
\begin{equation}\label{APhieq1}
        \mathcal{A}(k,\delta) \Phi_\delta =0.
    \end{equation}         
Since $\mathrm{ker}\,\mathcal{A}_0= \mathrm{span}\,\{\Phi_1,\cdots,\Phi_N \}$, we may view $\Phi_\delta$ as a perturbation of $\Phi_{\mathbf{a}}$ and write it as $\Phi_\delta = \Phi_{\mathbf{a}} + \Phi'$, where
\begin{equation}
    \Phi_{\mathbf{a}} := a_1 \Phi_1 + \cdots + a_N \Phi_N
\end{equation}
for some nonzero $\mathbf{a} =(a_1,\cdots,a_N) \in \mathbb{C}^N$. In order to uniquely determine $\Phi'$, the following relationship holds
\begin{equation}\label{orthocond1}
    \langle \Phi',\Phi_j\rangle_X =0, \qquad \forall j = 1,\cdots,N.
\end{equation}

\textit{Step 2.} It is clear that \eqref{APhieq1} is equivalent to the following equation:
\begin{equation}
    ( \tilde{\mathcal{A}}_0 + \mathcal{B} - \mathcal{P})[\Phi_{\mathbf{a}} + \Phi'] =0,
\end{equation}
where $\mathcal{B} := \mathcal{A}(k,\delta)-\mathcal{A}_0$. Observe that since the operator $\tilde{\mathcal{A}}_0 + \mathcal{B}$ is invertible for sufficiently small $k$ and $\delta$, we can apply $(\tilde{\mathcal{A}}_0 + \mathcal{B})^{-1}$ to both sides of the above equation to deduce that
\begin{equation}
    \Phi' = (\tilde{\mathcal{A}}_0 + \mathcal{B})^{-1} \mathcal{P} [\Phi_{\mathbf{a}}] - \Phi_{\mathbf{a}} = (I +\tilde{\mathcal{A}}_0^{-1} \mathcal{B})^{-1} [\Phi_{\mathbf{a}}] - \Phi_{\mathbf{a}} .
\end{equation}
Here, we have used $\mathcal{P}[\Phi']=0$ and $\tilde{\mathcal{A}}_0^{-1} \mathcal{P}[\Phi_{\mathbf{a}} ] = \Phi_{\mathbf{a}} $. Using the orthogonality condition \eqref{orthocond1}, we get
\begin{equation}\label{equationAa}
    A\mathbf{a}^\top =0,
\end{equation}
where $A  = (A_{ij}(k,\delta) )_{1\leq i,j\leq N}$ is defined by
\begin{equation}
    A_{ij}(k,\delta) := \big\langle (I +\tilde{\mathcal{A}}_0^{-1} \mathcal{B})^{-1} [\Phi_j ] ,\Phi_i \big\rangle_X - G_{ji} .
\end{equation}
We have the identity
\begin{equation}
    (I +\tilde{\mathcal{A}}_0^{-1} \mathcal{B})^{-1} [\Phi_j] = \Phi_j - \tilde{\mathcal{A}}_0^{-1} \mathcal{B} [\Phi_j]+ \tilde{\mathcal{A}}_0^{-1} \mathcal{B}\tilde{\mathcal{A}}_0^{-1} \mathcal{B} [\Phi_j]+ \cdots.
\end{equation}
Note that $\mathcal{B} = \mathcal{B}_0 + \mathcal{A}_1(k_p)$, where
\begin{equation}\label{eq:defb0}
    \mathcal{B}_0 := \mathcal{B} - \mathcal{A}_1(k_p) = \mathcal{A}(k,\delta) - \mathcal{A}_0 -  \mathcal{A}_1(k_p) = \mathcal{O}_{X\rightarrow Y}(k^2+\delta),
\end{equation}
and $\mathcal{A}_1(k_p)\Phi_j =0$. We obtain that
\begin{equation}
    (I +\tilde{\mathcal{A}}_0^{-1} \mathcal{B})^{-1} [\Phi_j] = \Phi_j - \tilde{\mathcal{A}}_0^{-1} \mathcal{B}_0 [\Phi_j]+ \mathcal{O}_{X} (k^3 + \delta k + \delta^2).
\end{equation}
Consequently, we get
\begin{equation}\label{formulaAmulti}
    A_{ij}(k,\delta) =  - \sum_{\ell=1}^N G_{i\ell}\big\langle \mathcal{B}_0 [\Phi_j] ,\Psi_\ell \big\rangle_{Y,Y'} + O(k^3 + \delta k + \delta^2) .
\end{equation}
Substituting this into \eqref{equationAa} and noting that $G$ is invertible, we get
\begin{equation}
    \tilde A \mathbf{a}^\top =0, \qquad \tilde A_{ij}:=- \big\langle \mathcal{B}_0 [\Phi_j] ,\Psi_i \big\rangle_{Y,Y'} + O(k^3 + \delta k + \delta^2) .
\end{equation}

\textit{Step 3.} Now, we calculate $\tilde A_{ij}$. From the last formula and the definition of $\mathcal{B}_0$ in \eqref{eq:defb0}, we have
\begin{equation}
    \begin{aligned}
        \tilde A_{ij} =  - k^2 \big\langle\mathcal{A}_{2,0}[\Phi_j] ,\Psi_i \big\rangle_{Y,Y'} - \delta \big\langle  \mathcal{A}_{0,1}  [\Phi_j],\Psi_i \big\rangle_{Y,Y'} + O(k^3 + \delta k + \delta^2).
    \end{aligned}
\end{equation}
    By the definition of $\mathcal{A}_{2,0}$, we have
    \begin{equation}
        \begin{aligned}
            \big\langle\mathcal{A}_{2,0}[\Phi_j] ,\Psi_i \big\rangle_{Y,Y'} &=  \alpha_j \beta_i \tau^2 \Big\{ \big\langle K_2^* \varphi_j, \psi_i \big\rangle_{L^2} - \big\langle k_s^2 \gamma \mathcal{N}_{0,k_s} \mathcal{H} K_2^* \varphi_j,\psi_i \big\rangle_{H^{-1},H^1} \Big\} \\
            &=\alpha_j \beta_i \tau^2   \big\langle K_2^* \varphi_j, (I - \overline{k_s^2 \gamma} \mathcal{H} \mathcal{N}_{0,k_s}^*) \psi_i \big\rangle_{L^2}  \\
            &=\alpha_j \beta_i \tau^2 \int_{\partial D_i} K_2^* \varphi_j \,dS \\
            &=-\alpha_j \beta_i \tau^2 |D_i|\delta_{ji}.
        \end{aligned}
    \end{equation}
    By the definition of $\mathcal{A}_{0,1}$, we have
    \begin{equation}
        \begin{aligned}
            \big\langle\mathcal{A}_{0,1}[\Phi_j] ,\Psi_i \big\rangle_{Y,Y'} &= \alpha_j \beta_i k_s^2\big\langle \mathcal{N}_{0,k_s}[\chi_{\partial D_j}],\psi_i \big\rangle_{L^2}  \\
            &=  \alpha_j \beta_i k_s^2 \int_{\partial D_i} (I - k_s^2 \gamma \mathcal{N}_{0,k_s}\mathcal{H}  )^{-1}\mathcal{N}_{0,k_s}[\chi_{\partial D_j}] \,dS \\
            & = \alpha_j \beta_i \tau^2 |D_i|C_{ji},
        \end{aligned}
    \end{equation}
    where $C_{ij}$ is the element of the capacitance matrix. Therefore, $\tilde A \mathbf{a}^\top =0$ is equivalent to 
    \begin{equation}
       \mathbf{a}(k^2 I - \delta C)  = O(k^3 + \delta k + \delta^2).
    \end{equation}
    Since $\mathbf{a}\in \mathbb{C}^N$ is a nonzero vector, we obtain the desired conclusion.
\end{proof}

\section{Higher-order resonant modes of a micro-bubble} \label{sec:5}

In this section, we generalize the results of \cite{fabry3d} on resonances beyond the subwavelength regime to the case of a micro-bubble in a viscous fluid with surface tension. We assume the validity of the linearized model at the frequencies considered. 

Let $\omega_0$ be such that  $\tau^2k(\omega_0)^2 = \frac{\tau^2 \rho a^2 \omega_0^2}{\kappa} $ is a nonzero simple Neumann eigenvalue of $-\Delta$ in $D$ with an associated $L^2(D)$-normalized eigenfunction $p_0$, that is, 
$$
\left\{
\begin{aligned}
& \Delta p_0 + \tau^2k(\omega_0)^2 p_0 =0 && \mathrm{in} \,  D,\\
& \frac{\partial p_0}{\partial \mathbf{n}} = 0 && \mathrm{on} \,  \partial D.
\end{aligned}
\right.
$$
We define $k_p(\omega_0)$ and $k_s(\omega_0)$, respectively, by 
$$
k_p(\omega_0)^2:= \frac{\omega_0^2 a^2 \rho}{\kappa + \mathrm{i} \omega_0 (\lambda + 2 \mu)}, \qquad k_s(\omega_0)^2:= \frac{\omega_0 a^2 \rho }{ \mathrm{i} \mu}.
$$
We fix the branches of $k_p(\omega_0)$ and $k_s(\omega_0)$ by imposing $\mathrm{Im}\, k_p(\omega_0)>0$ and $\mathrm{Im}\, k_s(\omega_0)>0$, and consider the normal hydrodynamic Neumann-to-Dirichlet operator $\mathcal{N}_{k_p(\omega), k_s(\omega)}$ for $\omega$ near $\omega_0$. Note first that the scattering resonance problem at higher frequencies admits the same integral formulation as \eqref{Bvarphi=0}.  

% Note also that, as $\omega \rightarrow \omega_0$,
% \begin{equation}
%     K^{a\omega/c_b, *} =  K^{a\omega_0/c_b, *} + (\omega - \omega_0) K^{a\omega_0/c_b, *}_1 + \mathcal{O}_{L^2 \rightarrow L^2} \big((\omega -\omega_0)^2 \big), 
% \end{equation}
% where
% \begin{equation}
%     K^{a\omega_0/c_b, *}_1 [\varphi](x) :=\frac{a^2\omega_0}{c^2_b}
%     \int_{\partial D}
%     \frac{
%         e^{\mathrm{i}a\omega_0|x-y|/c_b}
%     }{
%         4\pi 
%     }
%     \frac{(x-y)\cdot \mathbf{n}(x)}{|x-y|}
%     \varphi(y)\,dS(y),
% \end{equation}
Note also that, as \(\omega\to\omega_0\),
\begin{equation}\label{expansionKhigher}
    K^{\tau k(\omega),*}
    =
    K^{\tau k(\omega_0),*}
    + \tau^2\big(k(\omega)^2-k(\omega_0)^2\big) K^{\tau k(\omega_0),*}_1 + \mathcal O_{L^2\to L^2}
    \left(
        \big(k(\omega)^2-k(\omega_0)^2\big)^2
    \right),
\end{equation}
where
\begin{equation}
    K^{\tau k(\omega_0),*}_1[\varphi](x):=\int_{\partial D} \frac{
        e^{\mathrm i \tau k(\omega_0) |x-y| } }{8\pi } \frac{(x-y)\cdot \mathbf n(x)}{|x-y|}  \varphi(y)\,dS(y).
\end{equation}
Since $k(\omega)^2-k(\omega_0)^2 \approx \omega-\omega_0 $ when $\omega_0 \neq 0$, the expansion \eqref{expansionKhigher}
 differs from the expansion as $\omega \rightarrow 0$ and leads to a scattering resonant frequency that is a perturbation of order $\delta$ of $\omega_0$ instead of $O(\sqrt{\delta})$, as seen before, in the subwavelength regime. 

Then, Taylor expanding the operator $\mathcal{B}$ defined by \eqref{def:b} and recalling that
\begin{equation}
    (S^{\tau k(\omega_0)})^{-1} [p_0|_{\partial D}]
\end{equation}
spans the kernel of the operator $-(1/2) I + K^{\tau k(\omega_0), *}$ (see, for instance, \cite{ammari2009layer}),  we can expand the operator $\mathcal{A}$ defined in \eqref{eqofA} for $\omega$ close to $\omega_0$ and $\delta$ small enough. 
This leads to the following asymptotic expansion as $\delta\rightarrow 0$ and $k(\omega) \rightarrow k(\omega_0)$:
    \begin{equation}
    \begin{aligned}
        \mathcal{A}(k,\delta) &= \mathcal{A}_0(\omega_0, 0) + \big(k(\omega)^2-k(\omega_0)^2\big)\mathcal{A}_1(\omega_0) + \big(k^2(\omega) - k^2(\omega_0) \big) \mathcal{A}_{2,0} +\delta \mathcal{A}_{\omega_0,1} \\
        &\quad + \mathcal{O}_{X\rightarrow Y}\big((k(\omega)^2- k(\omega_0)^2)^2\big) + \mathcal{O}_{X\rightarrow Y} \big(\delta |k(\omega)-k(\omega_0)| \big) ,
    \end{aligned}
\end{equation}
where
\begin{equation}
    \mathcal{A}_0 := \mathcal{A}(\omega_0,0) =\begin{pmatrix}
        -\frac{1}{2}I + K^{\tau k(\omega_0), *} & -1 \\
        k_s(\omega_0)^2\gamma(\omega_0)\mathcal{N}_{k_p(\omega_0),k_s(\omega_0)} \mathcal{H} \big(-\frac{1}{2}I + K^{\tau k(\omega_0), *} \big)  & -1
    \end{pmatrix},
\end{equation}
and
\begin{equation}
    \begin{aligned}
        % &\mathcal{A}_1(k_p(\omega_0)) := \begin{pmatrix}
        %     0 & 0 \\
        %     k_s(\omega_0)^2\gamma (\mathcal{N}_{k_p(\omega),k_s(\omega)}-\mathcal{N}_{k_p(\omega_0),k_s(\omega_0)}) \mathcal{H} \big(-\frac{1}{2}I + K^{a \omega_0/c_b, *} \big)  & 0
        % \end{pmatrix}, \\
        & \mathcal{A}_1(\omega_0) := 
    \begin{pmatrix}
        0 & 0\\
        \displaystyle
        \left.
        \frac{d}{d(k^2)}
        \left[
            k_s(\omega)^2\gamma(\omega)
            \mathcal N_{k_p(\omega),k_s(\omega)}
        \right]
        \right|_{\omega=\omega_0}
        \mathcal H
        \left(
            -\frac12I+K^{\tau k(\omega_0),*}
        \right)
        & 0
    \end{pmatrix}  ,\\
        &\mathcal{A}_{2,0} := \begin{pmatrix}
            \tau^2 K^{\tau k(\omega_0), *}_1 & 0 \\
            k_s(\omega_0)^2 \gamma(\omega_0) \tau^2\mathcal{N}_{k_p(\omega_0),k_s(\omega_0)} \mathcal{H} K^{\tau k(\omega_0), *}_1 & 0
        \end{pmatrix}, \\
        &\mathcal{A}_{\omega_0,1} :=\begin{pmatrix}
            0 & 0 \\
            -k_s(\omega_0)^2 \mathcal{N}_{k_p(\omega_0),k_s(\omega_0)} S^{\tau k(\omega_0)} & 0
        \end{pmatrix}.
    \end{aligned}
\end{equation}
This is the analog of \eqref{expansion:A}. Following exactly the same arguments as those in the subwavelength regime, the higher-order modes are then characterized as follows.  

\begin{theorem}\label{thm61}
Assume that 
\begin{equation}
    I - k_s(\omega_0)^2 \gamma(\omega_0) \mathcal{N}_{k_p(\omega_0),k_s(\omega_0)}\mathcal{H} :L^2(\partial D) \rightarrow H^{-1}(\partial D)
\end{equation}
is invertible. As $\delta \rightarrow 0$, there exists a resonance for the single bubble $D$ that is close to $k(\omega_0)$:
    \begin{equation}\label{higherorderforma}
    \begin{aligned}
        &k_0(\delta) =k(\omega_0) \,+ \\
        &\delta \frac{k_s(\omega_0)^2}{2\tau^2 k(\omega_0)} \int_{\partial D} p_0 \big(I - k_s(\omega_0)^2 \gamma(\omega_0) \mathcal{N}_{k_p(\omega_0),k_s(\omega_0)}\mathcal{H}  \big)^{-1}\mathcal{N}_{k_p(\omega_0),k_s(\omega_0)}[p_0] \,dS  + O(\delta^2),
    \end{aligned}
    \end{equation}
    where $\mathcal{N}_{k_p(\omega_0),k_s(\omega_0)}$ is defined by \eqref{uindefDtN} with $k_p=k_p(\omega_0)$ and $k_s= k_s(\omega_0)$. 
\end{theorem}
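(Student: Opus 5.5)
We follow the Lyapunov--Schmidt scheme of Theorem \ref{thm314}, now centred at $\omega_0$ rather than at $0$. The first step is to identify the kernels of $\mathcal{A}_0=\mathcal{A}(\omega_0,0)$ and of its adjoint.

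Write $\varphi_0:=(S^{\tau k(\omega_0)})^{-1}[p_0|_{\partial D}]$. By the cited fact, this function spans $\ker(-\frac12 I+K^{\tau k(\omega_0),*})$. The structure of the kernel computation in Corollary \ref{lemma39kerA} carries over verbatim. If $\mathcal{A}_0(\varphi,\psi)^\top=0$, then $\psi=(-\frac12 I+K^{\tau k(\omega_0),*})\varphi$ and $(I-k_s^2\gamma\mathcal N\mathcal H)\psi=0$. The assumed invertibility forces $\psi=0$, so $\ker\mathcal{A}_0=\mathrm{span}\{\Phi_0\}$ with $\Phi_0=\alpha_0(\varphi_0,0)^\top$.

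For the adjoint we need the annihilator of $\mathrm{range}(-\frac12 I+K^{\tau k(\omega_0),*})$ in $L^2(\partial D)$. I claim it is spanned by $\overline{p_0}|_{\partial D}$. Indeed, for $p=S^{\tau k(\omega_0)}[\varphi]$ in $D$, Green's identity against the Neumann eigenfunction gives
\[
\int_{\partial D}\partial_{\mathbf n}p|_-\,p_0\,dS=\int_D(\Delta p\,p_0-p\,\Delta p_0)=0,
\]
and Fredholm index zero then gives equality with the full annihilator. This replaces ``constants'' in the subwavelength case. Setting $\psi_0=(I-\overline{k_s^2\gamma}\mathcal H\mathcal N^*)^{-1}[\overline{p_0}]$, we obtain $\ker\mathcal{A}_0^*=\mathrm{span}\{\Psi_0\}$ with $\Psi_0=\beta_0(\psi_0,-\psi_0)^\top$. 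Next we build $\tilde{\mathcal A}_0=\mathcal A_0+\mathcal P_0$ exactly as in Lemma \ref{lemma312}; the proof there only used the Fredholm property and the kernel description.

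Then, as in Steps 1--3 of Theorem \ref{thm314}, the equation $\mathcal A\Phi=0$ is reduced to the scalar equation $A(\omega,\delta)=-\langle(\mathcal A-\mathcal A_0)\Phi_0,\Psi_0\rangle+\text{h.o.t.}=0$. Set $\epsilon:=k(\omega)^2-k(\omega_0)^2$.

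The key difference from the subwavelength case is that the $\mathcal A_1(\omega_0)$ term does not vanish on $\Phi_0$ just because $\mathcal A_1$ is a derivative. It vanishes because its right factor is $(-\frac12 I+K^{\tau k(\omega_0),*})\varphi_0=0$. Similarly, the $\epsilon$-derivative of $K^{\tau k,*}$ entering the second component is killed by the pairing identity. Hence only two contributions survive at first order: $\epsilon\langle\mathcal A_{2,0}\Phi_0,\Psi_0\rangle$ and $\delta\langle\mathcal A_{\omega_0,1}\Phi_0,\Psi_0\rangle$. The second-order terms are $O(\epsilon^2+\epsilon\delta+\delta^2)$.

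We evaluate each surviving pairing. First,
\[
\langle\mathcal A_{2,0}\Phi_0,\Psi_0\rangle=\alpha_0\beta_0\tau^2\int_{\partial D}K_1^{*}[\varphi_0]\,p_0\,dS .
\]
This integral must be computed via the $k^2$-derivative of the interior problem. Differentiate $\Delta p_k+\tau^2k^2p_k=0$ with $p_k=S^{\tau k}[\varphi_0]$, and apply Green's identity against $p_0$. The result is $\int_{\partial D}\tfrac{d}{d(\tau^2k^2)}(K^{\tau k,*})\varphi_0\,p_0\,dS=-\int_D p_0^2=-1$, using the normalization. Second,
\[
\langle\mathcal A_{\omega_0,1}\Phi_0,\Psi_0\rangle=\alpha_0\beta_0k_s^2\int_{\partial D}p_0\,(I-k_s^2\gamma\mathcal N\mathcal H)^{-1}\mathcal N[p_0]\,dS,
\]
since $S^{\tau k(\omega_0)}\varphi_0=p_0$ on $\partial D$.

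Balancing the two terms gives
\[
\tau^2\epsilon=\delta k_s^2\int_{\partial D}p_0(\cdots)\,dS+O(\delta^2),
\]
which is solvable for $\epsilon=O(\delta)$ by the implicit function theorem or Rouché, since the coefficient of $\epsilon$ is nonzero. Finally, $k-k(\omega_0)=\epsilon/(2k(\omega_0))+O(\epsilon^2)$ yields \eqref{higherorderforma}.

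The main obstacle is the identification of the cokernel element and the evaluation of $\int K_1^*\varphi_0\,p_0=-1$, which requires care with the real versus conjugate pairing. One must check that $p_0$ may be taken real; it can, since it is a Neumann eigenfunction of a self-adjoint problem. Simplicity of the eigenvalue is exactly what guarantees the one-dimensional kernel.
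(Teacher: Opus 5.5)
Your proposal is correct and follows the paper's argument. The paper only says that the Lyapunov--Schmidt scheme of Theorem~\ref{thm314} carries over, and you supply the two ingredients it leaves implicit: the annihilator of the range of $-\frac12 I+K^{\tau k(\omega_0),*}$ is spanned by $p_0|_{\partial D}$ (replacing the constants), and $\int_{\partial D}K_1^{\tau k(\omega_0),*}[\varphi_0]\,p_0\,dS=-\int_D p_0^2\,dx=-1$ (replacing $\int_{\partial D}K_2^*[\varphi_0]\,dS=-|D|$). One wording point: the $K_1^{\tau k(\omega_0),*}$ term in the second row is not ``killed'' by the pairing. It combines with the first-row term through $(I-\overline{k_s^2\gamma}\,\mathcal{H}\mathcal{N}^*_{k_p(\omega_0),k_s(\omega_0)})\psi_0=p_0$, and your formula for $\langle\mathcal{A}_{2,0}\Phi_0,\Psi_0\rangle$ already reflects this correctly.
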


\begin{remark}
    There is an important difference between formula \eqref{minnaertform} for the Minnaert resonances and formula \eqref{higherorderforma} for the
higher-order resonances considered in this section. The higher-order
resonances are perturbations of a nonzero Neumann frequency $\omega_0$ that we assume to be simple for clarity of the presentation. Hence,
\[
    k(\omega_\delta)^2-k(\omega_0)^2=O(\delta),
\]
and all frequency-dependent exterior operators may be evaluated at
\(\omega_0\) to obtain the leading-order correction. As a consequence, formula \eqref{higherorderforma} for the higher-order resonances is explicit in the sense that all quantities in the right-hand side are evaluated at $\omega_0$.

By contrast, the Minnaert resonance bifurcates from the zero Neumann
eigenvalue. In that case $k(\omega_\delta)=O(\sqrt{\delta})$, and the
physical quantities
\begin{equation}
    k_s(\omega)^2=\frac{\rho a^2}{\mathrm{i}\mu}\omega,
    \qquad
    \gamma(\omega)=\frac{\sigma}{\rho\omega^2a^3}
\end{equation}
\emph{cannot be evaluated} at $\omega=0$. Consequently, after substituting the
physical parameters, the Minnaert formula generally gives an implicit equation
for $\omega$. For a spherical bubble, this implicit equation reduces to a
quadratic equation, yielding the explicit formula \eqref{physicalballformula} in Section \ref{sec:3}. We also note that by exactly the same arguments as those in \cite{fabry3d}, we can extend the analysis to nonzero Neumann eigenvalues with multiplicities higher than one.  

\end{remark}

\section*{Acknowledgments}
  The work of W.J.\,is partially supported by the NSFC Grant No.\,12571220 and by the New Cornerstone Investigator Program 100001127. The work of H.L.\,is substantially supported by the NSFC Grant No.\,12401561. This paper was initiated while H.A.\,was visiting the Hong Kong Institute for Advanced Study as a Senior Fellow.

\appendix

\section{Derivation of the basic equations for immiscible two-phase flow}\label{appenequ}

In this section of the appendix, we derive the basic equations for immiscible two-phase flow.

\subsection{The standard nonlinear system}

We start with the nonlinear system for immiscible two-phase flow in $\R^3$ with surface tension at the interface between the phases. Let $V_f(t)$ and, respectively, $V_g(t)$ be the domains at time $t$ occupied by the fluid and, respectively, gas phases, and let $\Sigma(t)$ be the interface between them.

%; see Fig. \ref{fig:Minneart}. 

%\begin{figure}[h]
%  \centering
%  \includegraphics[scale=0.5]{Minneart.eps}
%  \caption{The control volume.}
%  \label{fig:Minneart}
%\end{figure}

Let $(\rho_k,\mathbf{v}_k)$, $k=f,g$, denote the density field and velocity field of phase $k$. They satisfy, in $V_k$, the standard equations
\begin{equation}\label{massmomequa}
\begin{aligned}
    &\partial_t \rho_k + \nabla \cdot (\rho_k \mathbf{v}_k) = 0,\\
    &\partial_t (\rho_k\mathbf{v}_k) + \nabla \cdot (\rho_k \mathbf{v}_k\otimes \mathbf{v}_k) = \nabla \cdot \mathbf{T}_k.
    \end{aligned}
\end{equation}
For each phase $k$, i.e., inside $V_k(t)$, the first line accounts for the conservation of mass and the second line accounts for the balance of momentum inside each phase. Here $\mathbf{T}_k$ is the Cauchy stress tensor. We assume that both phases are Newtonian, so, for $k=f,g$, $\mathbf{T}_k$ takes the following form:
\begin{equation}\label{Cauchystress}
    \mathbf{T}_k = - p_k I + 2\mu_k \nabla^s \mathbf{v}_k + \lambda_k (\nabla \cdot \mathbf{v}_k ) I,
\end{equation}
where $\mu_k$ is the dynamic viscosity and $\lambda_k$ denotes the second viscosity of phase $k$.

\medskip

To describe the interface condition at $\Sigma(t)$ which separates the two phases, let $\mathbf{n}(t) = \mathbf{n}_g(t)$ denote the unit normal vector field along the interface $\Sigma(t)$ pointing from phase $g$ to phase $f$. For notational simplification, we ignore the time variable $t$ for the moment. The immiscible condition says that there is no phase change across the interface $\Sigma$, so we should have $\mathbf{v}_f \cdot \mathbf{n} = \mathbf{v}_i \cdot \mathbf{n}  = \mathbf{v}_g \cdot \mathbf{n}$. Here, $\mathbf{v}_i$ is the velocity of the interface.

To find the relation between $\mathbf{T}_f$ and $\mathbf{T}_g$ at $\Sigma$, consider a patch $S\subset \Sigma$ on the interface, which can be thought of as the zero thickness limit of a volume of the two-phase body with $S$ in the center. The aforementioned $\mathbf{n}$ is a unit normal vector field (outward to $V_g$) on $\Sigma$. For each $p\in S\subset \Sigma$, let $\mathbf{s}_p$ be the outward normal to $\partial S$ in $S$, and let $\mathbf{t}_p\in T_p\partial S$ be the boundary tangent compatible with $\mathbf{n}_p$, so that
\begin{equation}
    \mathbf{s}_p \times \mathbf{t}_p = \mathbf{n}_p.
\end{equation}

The total force on $S$ should be zero because the surface has zero density in the zero-thickness limit. More precisely, we should have
\begin{equation*}
    \int_S \mathbf{T}_f\mathbf{n} - \mathbf{T}_g\mathbf{n} \,dA + \oint_{\partial S} \sigma \mathbf{s}\,d\ell = 0.
\end{equation*}
Here, $dA$ is the surface measure on $S$ and $d\ell$ is the length measure on the boundary $\partial S$ of $S$. The first two terms account for the contribution of the stress in the fluid on the two sides of $S$, and the last term accounts for the surface tension. $\sigma$ is the surface tension coefficient and is assumed to be a constant. The last integral above can be computed using Stokes' theorem
\begin{equation*}
\begin{aligned}
\oint_{\partial S} \sigma \mathbf{s} \,d\ell =-2\sigma \int_S \frac{1}{2} (\nabla\cdot \mathbf{n}) \mathbf{n}\,dA  .
\end{aligned}
\end{equation*}

It is well known that for a surface $\Sigma$ in $\R^3$, $-\frac12 \nabla\cdot \mathbf{n}$ is the mean curvature $H$ of $\Sigma$, and it is also the mean of the eigenvalues of the second fundamental form, agreeing with the general definition \eqref{defHmeancur}. By shrinking $S$ on $\Sigma$, we get the pointwise balance condition along the interface $\Sigma(t)$ given by
\begin{equation}\label{boundmoment}
   (\mathbf{T}_g - \mathbf{T}_f)\mathbf{n}_g = 2\sigma H\mathbf{n}_g  \qquad \text{on } \Sigma(t). 
\end{equation}
We also assume that for both phases, the pressure field is a function of the density field, and finally we get the full nonlinear system that describes the immiscible two-phase flow:
\begin{equation}\label{fullequation}
    \left\{
\begin{aligned}
    & \partial_t \rho_k + \nabla \cdot (\rho_k \mathbf{v}_k) = 0, && k = f,\,g, \\
    & \partial_t (\rho_k\mathbf{v}_k) + \nabla \cdot (\rho_k \mathbf{v}_k\otimes \mathbf{v}_k) = \nabla \cdot \mathbf{T}_k, && k = f,\,g, \\
    & p_k = p_k(\rho_k), && k = f,\,g, \\
    & (\mathbf{v}_f - \mathbf{v}_g ) \cdot \mathbf{n} =0, && \mathrm{on} \ \Sigma(t) , \\
    & \mathbf{T}_g\mathbf{n} - \mathbf{T}_f \mathbf{n} = 2\sigma H\mathbf{n}, &&\mathrm{on} \ \Sigma(t) , \\
    & \mathbf{T}_k = - p_k I + 2\mu_k \nabla^s \mathbf{v}_k + \lambda_k (\nabla \cdot \mathbf{v}_k ) I, && k=f,\,g.
\end{aligned}
    \right.
\end{equation}

\subsection{\texorpdfstring{Linearization of the two-phase flow system}{Linearization of the full equation}}

In this section, we follow the classical modeling of low-frequency resonances of bubbles; see, for instance, \cite{add1,add2,add3}. We consider small amplitude oscillations of the bubble and treat the system as a harmonic oscillator.
We %then freeze the time variable and
linearize system \eqref{fullequation} around a reference state with interface $\Sigma$:
\begin{equation}
    \rho_k= \rho^0_k, \qquad p_k=p^0_k, \qquad \mathbf{v}_k=\mathbf{v}^0_k = 0.
\end{equation}
The first four lines in \eqref{fullequation} have the following standard forms in the linearization regime:
\begin{equation}
    \begin{aligned}
    & \partial_t \rho_k + \rho_k^0\nabla \cdot \mathbf{v}_k = 0, && k = f,\,g, \\
    & \rho_k^0\partial_t  \mathbf{v}_k = \nabla \cdot \mathbf{T}_k, && k = f,\,g, \\
    & p_k = \left(\frac{\partial p_k}{\partial \rho_k}\right)_S \rho_k= c_k^2 \rho_k, && k = f,\,g, \\
    & (\mathbf{v}_f - \mathbf{v}_g ) \cdot \mathbf{n} =0, && \mathrm{on} \ \Sigma .
\end{aligned}
\end{equation}

For the linearization of the dynamic interface condition, we assume that $\Sigma$ is perturbed and write the perturbed interface $\Sigma_\varepsilon$ in the normal coordinates:
\begin{equation}
    \Sigma_\varepsilon = \{x + \eps \eta (x) \mathbf{n}(x): x\in \Sigma\},
\end{equation}
where $\eps \ll 1$ and hence the scalar function $\eps \eta(x)$ is a very small normal displacement of the interface. The Cauchy stress tensor is also perturbed and can be written, to the leading order in $\eps$, as
\begin{equation}\label{perturbT}
    (\mathbf{T}_k)_\varepsilon = (\mathbf{T}_k)_0 + \varepsilon \mathbf{T}_k, \qquad \mathbf{T}_k =O(1).
\end{equation}
By the variation of $\mathbf{n}$  in \eqref{firstvariationofn}, to the leading order, the normal vector field of $\Sigma_\eps$ is given by
\begin{equation}\label{variationnormal}
    \mathbf{n}_\varepsilon= \mathbf{n} - \varepsilon\nabla_\Sigma\eta .
\end{equation}
By the variation of $H$ in Proposition \ref{firstvariameancurva}, to the leading order, the mean curvature of $\Sigma_\eps$ is given by
\begin{equation}\label{perturbH}
    2H_\varepsilon = 2H + \varepsilon(\Delta_\Sigma + |B|^2)\eta .
\end{equation}
Substituting \eqref{perturbT}, \eqref{variationnormal} and \eqref{perturbH} into the perturbed dynamic interface condition
\begin{equation}
    (\mathbf{T}_g)_\varepsilon\mathbf{n}_\varepsilon - (\mathbf{T}_f )_\varepsilon\mathbf{n}_\varepsilon = 2\sigma H_\varepsilon\mathbf{n}_\varepsilon,
\end{equation}
and subtracting the unperturbed interface condition $(\mathbf{T}_g)_0\mathbf{n} - (\mathbf{T}_f )_0\mathbf{n}  = 2\sigma H \mathbf{n}$, we get
\begin{equation}
    (\mathbf{T}_g - \mathbf{T}_f) \mathbf{n} = \sigma (\Delta_\Sigma +|B|^2) \eta \mathbf{n}.
\end{equation}

Therefore, the linearization of system \eqref{fullequation} for $(p,\eta,\mathbf{v})$ is
\begin{equation}\label{linearq}
      \left\{
\begin{aligned}
    & \partial_t p_k + c_k^2\rho_k^0\nabla \cdot \mathbf{v}_k = 0, && k = f,\,g, \\
    & \rho_k^0\partial_t  \mathbf{v}_k = \nabla \cdot \mathbf{T}_k, && k = f,\,g, \\
    & \partial_t \eta =\mathbf{v}_f \cdot \mathbf{n}= \mathbf{v}_g  \cdot \mathbf{n} , && \mathrm{on} \ \Sigma , \\
    & \mathbf{T}_g \mathbf{n} - \mathbf{T}_f \mathbf{n} = \sigma (\Delta_\Sigma +|B|^2) \eta \mathbf{n} , && \mathrm{on} \ \Sigma , \\
    & \mathbf{T}_k = - p_k I + 2\mu_k \nabla^s \mathbf{v}_k + \lambda_k (\nabla \cdot \mathbf{v}_k ) I, && k=f,\,g.
\end{aligned}
    \right.
\end{equation}

In the time-harmonic setting, we assume that
\begin{equation}
    p(t,x) = p(x) \mathrm{e}^{\mathrm{i}\omega t}, \qquad \eta(t,x)= \eta(x) \mathrm{e}^{\mathrm{i}\omega t}, \qquad \mathbf{v}_k(t,x)=\mathbf{v}_k(x)\mathrm{e}^{\mathrm{i}\omega t}.
\end{equation}
Substituting this into $\partial_t \eta =\mathbf{v}_f \cdot \mathbf{n}$, we get 
\begin{equation}\label{connectionetav}
    \eta (x) = \frac{1}{\mathrm{i}\omega} \mathbf{v}_f \cdot \mathbf{n}.
\end{equation}
Using \eqref{connectionetav}, we can eliminate the variable $\eta$ in \eqref{linearq}. Therefore, the time-harmonic linearized equation is given by
\begin{equation}\label{linearqharmonic}
      \left\{
\begin{aligned}
    & \mathrm{i}\omega p_k + c_k^2\rho_k^0\nabla \cdot \mathbf{v}_k = 0, && k = f,\,g, \\
    & \mathrm{i}\omega\rho_k^0   \mathbf{v}_k = \nabla \cdot \mathbf{T}_k, && k = f,\,g, \\
    & \mathbf{v}_f \cdot \mathbf{n}= \mathbf{v}_g  \cdot \mathbf{n} , && \mathrm{on} \ \Sigma , \\
    & \mathbf{T}_g \mathbf{n} - \mathbf{T}_f \mathbf{n} = \frac{\sigma}{\mathrm{i}\omega} (\Delta_\Sigma +|B|^2) (\mathbf{v}_f \cdot \mathbf{n})\mathbf{n} , && \mathrm{on} \ \Sigma , \\
    & \mathbf{T}_k = - p_k I + 2\mu_k \nabla^s \mathbf{v}_k + \lambda_k (\nabla \cdot \mathbf{v}_k ) I, && k=f,\,g.
\end{aligned}
    \right.
\end{equation}
Ignoring the viscosity in the gas phase, we get the system \eqref{maineq} studied in this paper.

\section{Some properties of hydrodynamic layer potentials}\label{appenlayerpotential}

In this section, we briefly introduce the theory of hydrodynamic layer potentials. For more details, we refer the reader to \cite[Section 1.4]{ammari2015mathematical}.

For $\mathrm{Im}\,z\geq 0$, let $G^z$ be the fundamental solution of $\Delta +z^2$ such that $(\Delta+z^2)G^z = \delta_0$, where $\delta_0$ is the Dirac delta distribution, namely,
\begin{equation}\label{fundHel}
    G^z(x) =g^z(|x|), \qquad g^z(r) = -\frac{\mathrm{e}^{\mathrm{i} z r}}{4\pi r} .
\end{equation}

\begin{lemma}\label{lemmaC1}
    One has, as $z\rightarrow 0$,
    \begin{equation}\label{expansionGz}
        G^z(x) = \sum_{j=0}^{\infty} z^j G_j(x), 
    \end{equation}
    where, for every $j \in \mathbb{N}$,
    \begin{equation}
        G_j(x) := g_j(|x|), \qquad g_j(r)= -\frac{\mathrm{i}^j}{4\pi j!}  r^{j-1}.
    \end{equation}
\end{lemma}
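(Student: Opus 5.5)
The expansion follows from the power series of the exponential. For fixed $x\neq 0$ and every complex $z$,
\[
\mathrm{e}^{\mathrm{i}z|x|}=\sum_{j=0}^{\infty}\frac{(\mathrm{i}z|x|)^j}{j!}.
\]
Substituting this into $g^z(r)=-\mathrm{e}^{\mathrm{i}zr}/(4\pi r)$ from \eqref{fundHel} gives, term by term,
\[
G^z(x)=-\frac{1}{4\pi|x|}\sum_{j=0}^{\infty}\frac{\mathrm{i}^j z^j|x|^j}{j!}=\sum_{j=0}^{\infty} z^j\Big(-\frac{\mathrm{i}^j}{4\pi j!}|x|^{j-1}\Big),
\]
and the coefficient of $z^j$ is exactly $G_j(x)=g_j(|x|)$. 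This step needs nothing beyond the definition of $G^z$.

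The only point that requires a comment is the sense in which the series converges, because the paper will use \eqref{expansionGz} inside boundary integral operators such as $S^z$, $K^{z,*}$ and the hydrodynamic potentials. I would record a uniform bound. For $|x|\le R$ and $|z|\le z_0$,
\[
\big|z^jG_j(x)\big|\le \frac{1}{4\pi|x|}\,\frac{(z_0R)^j}{j!}.
\]
Hence the series converges absolutely and uniformly on compact subsets of $\{x\neq 0\}$, with the weakly singular factor $|x|^{-1}$ pulled out. It is also analytic in $z$ on all of $\mathbb{C}$, and in particular as $z\to0$.

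It follows that the truncated remainders satisfy
\[
\Big|G^z(x)-\sum_{j<m}z^jG_j(x)\Big|\le C|z|^m|x|^{m-1},
\]
with similar bounds for derivatives in $x$. This is what later yields operator-norm expansions of the associated layer potentials. No step is a real obstacle. The only care needed is keeping the sign convention $(\Delta+z^2)G^z=\delta_0$, hence the minus sign in the formula for $g_j$, and noting that $\mathrm{Im}\,z\ge0$ plays no role in the expansion itself, since the series is entire in $z$.
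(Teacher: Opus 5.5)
Your proof is correct and matches the paper's, which simply cites the Taylor expansion of $g^z(r)=-\mathrm{e}^{\mathrm{i}zr}/(4\pi r)$ in $z$. Your uniform remainder bound $\big|G^z(x)-\sum_{j<m}z^jG_j(x)\big|\le C|z|^m|x|^{m-1}$ is a useful explicit supplement: the paper leaves it implicit when it derives the operator expansions in Lemma~\ref{lemmaC2}.
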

\begin{proof}
    The conclusion immediately follows from the Taylor series expansion of $g^z(r)$ as $z\rightarrow 0$.
\end{proof}

The single-layer potential associated with $\Delta +z^2$ is defined by
\begin{equation}
    S^z [\varphi](x) := \int_{\partial D} G^z(x - y) \varphi(y)\,dS(y), \qquad \forall x\in \mathbb{R}^3 .
\end{equation}
The single-layer potential is continuous across the boundary $\partial D$:
\begin{equation}
    S^z [\varphi]\big|_+ (x)= S^z [\varphi]\big|_-(x), \qquad \forall x\in \partial D.
\end{equation}
The normal derivative of the single-layer potential has the jump formula
\begin{equation}\label{jumpformula}
    \left. \frac{\partial S^z  [\varphi]}{\partial \mathbf{n}} \right|_\pm (x) = \left(
    \pm \frac{1}{2}I + K^{z,*} \right)[\varphi](x), \qquad \forall x\in \partial D.
\end{equation}
Here, the Neumann-Poincar\'{e} operator is defined by
\begin{equation}
    K^{z,*} [\varphi](x) := \mathrm{p.v.} \int_{\partial D} \nabla G^z(x - y) \cdot\mathbf{n}(x) \varphi(y)\,dS(y) .
\end{equation}

Using Lemma \ref{lemmaC1}, one easily gets the following result.

\begin{lemma}\label{lemmaC2}
    For any $s\in \mathbb{R}$, as $z\rightarrow 0$,
    \begin{equation}
        S^z = S + \mathcal{O}_{H^s \rightarrow H^{s+1}}(z), \qquad K^{z,*} = K^* + z^2 K_2^* + \mathcal{O}_{H^s \rightarrow H^s} (z^3).
    \end{equation}
    Here, $S = S^{z=0}$, $K^* = K^{z=0,*}$, and
    \begin{equation}
        K^*_2[\varphi](x):= \int_{\partial D} \nabla G_2(x - y) \cdot\mathbf{n}(x) \varphi(y)\,dS(y).
    \end{equation}
\end{lemma}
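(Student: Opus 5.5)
The plan is to expand the kernels and control the remainders uniformly in $z$ as operators on the Sobolev scale of $\partial D$, working directly from the closed form of $G^z$ rather than term by term from the series of Lemma \ref{lemmaC1}. This avoids having to sum operator norms.

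For the single-layer potential, write
\begin{equation}
G^z(x-y)-G_0(x-y)=-\frac{\mathrm{e}^{\mathrm{i}zr}-1}{4\pi r}=-\frac{\mathrm{i}z}{4\pi}\,h(\mathrm{i}zr),\qquad h(w)=\frac{\mathrm{e}^{w}-1}{w},\quad r=|x-y|.
\end{equation}
Here $h$ is entire. The kernel of $S^z-S$ is therefore $z$ times $F_z(x,y):=-\frac{\mathrm{i}}{4\pi}h(\mathrm{i}z|x-y|)$. Split $h$ into its even and odd parts in $r$:
\begin{itemize}
\item The even part is an entire function of $|x-y|^2$. It is therefore $C^\infty$ on $\partial D\times\partial D$, with all derivatives bounded uniformly for $|z|\le 1$, so it gives a smoothing operator $H^s\to H^{t}$ for every $s,t$.
\item The odd part is $\sum_m c_m z^{2m+1}|x-y|^{2m+1}$. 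Each term has a kernel homogeneous of degree $2m+1$ on the two-dimensional surface $\partial D$, so by the classical theory of pseudo-homogeneous kernels (Seeley / Agranovich) it is a pseudodifferential operator of order $-(2m+3)\le -3$.
\end{itemize}
The key quantitative point is that the $H^s\to H^{s+1}$ norm of the operator with kernel $|x-y|^{2m+1}$ grows at most geometrically in $m$. This follows, for instance, by differentiating the kernel tangentially only finitely many times, depending on $s$, and using $|c_m|\le 1/(2m+2)!$. The series then converges in $\mathcal{L}(H^s,H^{s+1})$ uniformly for $|z|\le1$, which gives $S^z=S+\mathcal{O}_{H^s\to H^{s+1}}(z)$.

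For $K^{z,*}$, the gradients of the terms in Lemma \ref{lemmaC1} are $\nabla G_j(x)=g_j'(r)\,x/r$ with $g_j'(r)=-\frac{\mathrm{i}^j(j-1)}{4\pi j!}r^{j-2}$. In particular $\nabla G_1=0$, so no linear term appears. The $z^2$ term has kernel $-\frac{\mathrm{i}^2}{8\pi}\frac{(x-y)\cdot\mathbf n(x)}{|x-y|}$, which is exactly $\nabla G_2(x-y)\cdot\mathbf{n}(x)$ and so defines $K_2^*$. Since $\partial D$ is smooth, $(x-y)\cdot\mathbf n(x)=O(|x-y|^2)$, so this kernel is bounded and pseudo-homogeneous of nonnegative degree; hence $K_2^*$ is of order $\le -2$ and bounded on every $H^s$.

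For the remainder I will again use the closed form. Write $\nabla G^z-\nabla G_0-z^2\nabla G_2$ as $z^3$ times $\frac{x-y}{|x-y|}$ multiplied by an entire function of $(z,|x-y|)$. Then apply the same even/odd splitting and the same geometric norm bounds to conclude that the remainder is $\mathcal{O}_{H^s\to H^s}(z^3)$.

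The main obstacle is making the mapping estimates valid for all $s\in\mathbb{R}$, including negative $s$, uniformly in $z$. I would first prove the estimates for $s\ge0$ via kernel regularity and Schur-type bounds after tangential differentiation. For $s<0$ I would argue by duality: the transposes have kernels of the same pseudo-homogeneous type, with $\mathbf n(y)$ in place of $\mathbf n(x)$ for the $K$-terms, and these are handled identically.
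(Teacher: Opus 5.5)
Your proposal is correct and takes essentially the paper's route. The paper proves this in one line, by invoking the Taylor expansion of $G^z$ from Lemma \ref{lemmaC1} and reading off the operator expansions, including $\nabla G_1=0$ and the identification of $K_2^*$. Your closed-form remainders and even/odd splitting supply the uniform-in-$z$ operator bounds that the paper leaves implicit.

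A small simplification: the odd part of $h(\mathrm{i}z|x-y|)$ is exactly $\frac{\cos(z|x-y|)-1}{\mathrm{i}z|x-y|}$, which equals $\mathrm{i}z|x-y|$ times an entire function of $z^2|x-y|^2$. So you can treat it as a single degree-one pseudo-homogeneous kernel with a uniformly smooth coefficient, and skip the geometric summation in $m$.
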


For complex numbers $k_p$ and $k_s$ such that
\begin{equation}
    \mathrm{Im}\,k_p\geq 0, \qquad \mathrm{Im}\, k_s> 0, 
\end{equation}
the hydrodynamic fundamental solution $(\mathbf{\Gamma}^{k_p,k_s},\Pi^{k_p,k_s})$ is defined by
\begin{equation}\label{deps}
     \mathbf{\Gamma}^{k_p,k_s} := \mathbf{\Gamma}_p^{k_p,k_s}  + \mathbf{\Gamma}_s^{k_s} , \qquad \Pi^{k_p,k_s} := \left( 2\frac{k_p^2}{k_s^2} -1 \right) \nabla G^{k_p},
\end{equation}
where the pressure component $\mathbf{\Gamma}_p^{k_p,k_s}  $ and the shear component $ \mathbf{\Gamma}_s^{k_s}$ are defined by
\begin{equation}\label{depsd}
    \mathbf{\Gamma}_p^{k_p,k_s} : = -\frac{1}{k_s^2} \nabla \nabla G^{k_p}, \qquad \mathbf{\Gamma}_s^{k_s} := \frac{1}{k_s^2} \left(k_s^2\mathbf{I} +\nabla\nabla \right) G^{k_s}  ,
\end{equation}
respectively. One has
\begin{equation}\label{funds}
    \nabla \cdot (2\nabla^s \mathbf{\Gamma}^{k_p,k_s}  - \Pi^{k_p,k_s} \otimes  \mathbf{I}) +k_s^2 \mathbf{\Gamma}^{k_p,k_s}   = \delta_0 \mathbf{I}, \qquad \nabla \cdot \mathbf{\Gamma}^{k_p,k_s}  = \frac{k_p^2}{2k_p^2 - k_s^2}\Pi^{k_p,k_s} .
\end{equation}
Moreover, $\mathbf{\Gamma}^{k_p,k_s}$ satisfies the following decay rate at infinity:
\begin{equation}
    |\mathbf{\Gamma}^{k_p,k_s} (x) |\leq C(k_p,k_s) \mathrm{e}^{-h(k_p,k_s) |x|}\qquad \mathrm{as}\ |x|\rightarrow \infty,
\end{equation}
where $h(k_p,k_s) := \mathrm{Im}\,k_p \wedge \mathrm{Im}\,k_s$ denotes the exponential decay rate. 

The hydrodynamic single-layer potential operator is defined by
\begin{equation}
\begin{aligned}
    & \mathbf{S}^{k_p,k_s}[\bm{\varphi}] (x) := \int_{\partial D} \mathbf{\Gamma}^{k_p,k_s} (x-y) \bm{\varphi}(y)\,dS(y),  \\
    & Q^{k_p,k_s}[\bm{\varphi}] (x) := \int_{\partial D} \Pi^{k_p,k_s} (x-y) \cdot \bm{\varphi}(y)\,dS(y), 
\end{aligned}
    \qquad  \forall x\in \mathbb{R}^3 .
\end{equation}
The single-layer potential is continuous across the boundary $\partial D$:
\begin{equation}
    \mathbf{S}^{k_p,k_s} [\bm\varphi]\big|_+ (x)= \mathbf{S}^{k_p,k_s} [\bm\varphi]\big|_-(x), \qquad \forall x\in \partial D.
\end{equation}
The boundary traction of the hydrodynamic single-layer potential has the jump formula
\begin{equation}\label{jumpform}
    \Big(2\nabla^s \mathbf{S}^{k_p,k_s}[\bm{\varphi}] - Q^{k_p,k_s}[\bm{\varphi}]  \mathbf{I} \Big) \mathbf{n} \big|_\pm = \left(
    \pm \frac{1}{2} \mathbf{I} + \mathbf{K}^{k_p,k_s,*} \right)[\bm{\varphi}],
\end{equation}
where the Neumann-Poincar\'{e} operator $\mathbf{K}^{k_p,k_s,*}$ is defined by
\begin{equation}
    \mathbf{K}^{k_p,k_s,*}[\bm{\varphi}](x) := \mathrm{p.v.} \int_{\partial D} \Big(2\nabla^s \bm{\Gamma}^{k_p,k_s} (x-y)- \Pi^{k_p,k_s}(x-y) \otimes \mathbf{I} \Big) \mathbf{n} (x)\bm{\varphi}(y)\,dS(y) .
\end{equation}

We have the following mapping properties for the Neumann-Poincar\'{e} operator.

\begin{lemma}\label{lemmaKmaping}
    For any $s\in\mathbb{R}$, 
    \begin{equation}\label{C20bounded}
        \mathbf{K}^{0,k_s,*}: H^s(\partial D;\mathbb{C}^3) \rightarrow H^{s+1}(\partial D;\mathbb{C}^3)
    \end{equation}
    is a bounded operator. Moreover, as $k_p\rightarrow 0$,
    \begin{equation}\label{C21continu}
         \mathbf{K}^{k_p,k_s,*} =  \mathbf{K}^{0,k_s,*} + \mathcal{O}_{H^s \rightarrow H^{s+1}}(k_p^2).
    \end{equation}
\end{lemma}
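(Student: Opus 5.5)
The plan is to prove both claims by splitting every kernel into an explicit leading singular part plus a weakly singular remainder, using the small-argument expansion of Lemma \ref{lemmaC1}. We then use the standard fact that an integral operator on the smooth closed surface $\partial D$, whose kernel is $O(|x-y|^{-1})$ with a classical pseudo-homogeneous expansion, is a pseudodifferential operator of order $-1$. Such an operator maps $H^s(\partial D)\to H^{s+1}(\partial D)$ for every $s$ (cf.\ \cite{agranovich1999spectral}). Kernels that are $O(1)$ with pseudo-homogeneous structure give operators of order $\le -2$, which are harmless.

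\emph{Step 1: the case $k_p=0$.} Write $G^{k_s}=G_0+\sum_{j\ge1}k_s^jG_j$. Since $G_1$ is constant, we get
\begin{equation}
\mathbf{\Gamma}^{0,k_s}=G^{k_s}\mathbf{I}+\frac{1}{k_s^2}\nabla\nabla\big(G^{k_s}-G_0\big)=\big(G_0\mathbf{I}+\nabla\nabla G_2\big)+\mathbf{R},
\end{equation}
where $\mathbf{R}$ collects $k_s\nabla\nabla G_3$ (a constant matrix, since $G_3\propto r^2$), $k_s^2G_2\mathbf{I}$, and higher-order smooth-in-$r$ terms. With $G_0=-1/(4\pi r)$ and $G_2=r/(8\pi)$, the bracket equals $-\frac{1}{8\pi}(\mathbf{I}/r+xx^\top/r^3)$, i.e.\ minus the Stokeslet. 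Also $\Pi^{0,k_s}=-\nabla G_0$ is the corresponding Stokes pressure. Hence
\begin{equation}
\mathbf{K}^{0,k_s,*}=-\mathbf{K}^*_{\rm Stokes}+\mathbf{K}_{\mathbf{R}},
\end{equation}
where $\mathbf{K}_{\mathbf{R}}$ has kernel built from at most one derivative of $\mathbf{R}$. That kernel is bounded and pseudo-homogeneous, so $\mathbf{K}_{\mathbf{R}}$ has order $\le-2$. The Stokes Neumann--Poincar\'e kernel is $\frac{3}{4\pi}\frac{(x-y)(x-y)^\top\,(x-y)\cdot\mathbf{n}(x)}{|x-y|^5}$. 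On a smooth surface $(x-y)\cdot\mathbf{n}(x)=O(|x-y|^2)$, so this kernel is $O(|x-y|^{-1})$ and the operator has order $-1$. This gives \eqref{C20bounded}.

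\emph{Step 2: dependence on $k_p$.} The differences are
\begin{equation}
\mathbf{\Gamma}^{k_p,k_s}-\mathbf{\Gamma}^{0,k_s}=-\frac{1}{k_s^2}\nabla\nabla\big(G^{k_p}-G_0\big),\qquad
\Pi^{k_p,k_s}-\Pi^{0,k_s}=-\nabla\big(G^{k_p}-G_0\big)+\frac{2k_p^2}{k_s^2}\nabla G^{k_p}.
\end{equation}
Because $G_1$ is constant and is annihilated by derivatives, every term is $O(k_p^2)$, and the expansion is analytic in $k_p$. The $k_p^2$ coefficient is the traction built from $-\frac{1}{k_s^2}\nabla\nabla G_2$ and the pressure $-\nabla G_2+\frac{2}{k_s^2}\nabla G_0$. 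The terms of order $k_p^3$ and higher involve derivatives of $G_j$ with $j\ge3$. These kernels are bounded, so they contribute $O(k_p^3)$ in $H^s\to H^{s+2}$, and the series converges uniformly for small $k_p$.

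\emph{Main obstacle.} The difficulty is the $k_p^2$ term. The third derivatives $\nabla\nabla\nabla G_2$ and $\nabla G_0$ are homogeneous of degree $-2$. After contracting with $\mathbf{n}(x)$ they contain pieces such as $(x-y)_i n_j(x)/|x-y|^3$ with no factor $(x-y)\cdot\mathbf{n}(x)$, which a priori give order-zero Calder\'on--Zygmund operators. I would first try to exhibit a cancellation by identifying the combined kernel as the traction of a potential-type field. Using $\Delta G_2=2G_0'/r$-type identities, i.e.\ $\Delta G_2=-2G_0$, the combination $2\nabla\nabla\nabla G_2\,\mathbf{n}$ together with the pressure contribution should reorganize into terms carrying $(x-y)\cdot\mathbf{n}(x)$ or $\mathbf{n}(x)-\mathbf{n}(y)$, which would give order $-1$. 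If this direct reorganization is messy, I would instead compute the principal symbol by freezing $\partial D$ to its tangent plane at a point. Fourier transforming the degree-$(-2)$ odd kernel in the tangential variables, I would check that the order-zero symbol vanishes, so the operator has order $-1$ with operator norm $O(k_p^2)$. This yields \eqref{C21continu}.
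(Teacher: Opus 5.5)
Your Step 1 is correct and is the paper's own argument written out in more detail: split off minus the Stokeslet and its pressure, then use $(x-y)\cdot\mathbf{n}(x)=O(|x-y|^2)$. The gap is in Step 2, and it cannot be closed. The cancellation you hope for does not occur, and \eqref{C21continu} is false in the $H^s\to H^{s+1}$ norm.

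Finish the computation you set up, with $z=x-y$, $r=|z|$ and $\mathbf{n}=\mathbf{n}(x)$. The $k_p^2$-coefficient of the traction kernel has $(i,j)$ entry
\[
\frac{1}{4\pi k_s^2 r^3}\Big(z_i n_j - z_j n_i + \delta_{ij}\, z\cdot\mathbf{n} - \frac{3 z_i z_j\, z\cdot\mathbf{n}}{r^2}\Big) + \frac{z_j n_i}{8\pi r}.
\]
The velocity $-\frac{1}{k_s^2}\nabla\nabla G_2$ contributes $z_in_j+z_jn_i$, and the pressure term $\frac{2}{k_s^2}\nabla G_0$ contributes $-2z_jn_i$. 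What survives is $(z\,\mathbf{n}^\top-\mathbf{n}\,z^\top)/(4\pi k_s^2 r^3)$, which carries no factor $z\cdot\mathbf{n}$.

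Freeze to the tangent plane ($\mathbf{n}=e_3$, $z\perp e_3$) and use that the two-dimensional Fourier transform of $z_k/|z|^3$ is $-2\pi\mathrm{i}\,\xi_k/|\xi|$. The symbol of that surviving term is $\frac{-\mathrm{i}}{2k_s^2|\xi|}\begin{pmatrix}0&\xi\\-\xi^\top&0\end{pmatrix}$, with eigenvalues $0,\pm\frac{1}{2k_s^2}$. Hence $k_p^{-2}(\mathbf{K}^{k_p,k_s,*}-\mathbf{K}^{0,k_s,*})$ converges to an order-zero operator with nonvanishing principal symbol. Such an operator is neither compact nor bounded $H^s\to H^{s+1}$.

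Structurally, eliminating $q$ gives a Lam\'e system with $\mu=1$ and $\lambda=k_s^2/k_p^2-2$. Its Neumann--Poincar\'e operator has the known non-compact part of size $\mu/(2(\lambda+2\mu))=k_p^2/(2k_s^2)$. Since $\mathbf{K}^{0,k_s,*}$ is compact on $H^s$ by the first claim, \eqref{C21continu} would force $\mathbf{K}^{k_p,k_s,*}$ to be compact, which it is not. The paper's proof has exactly the same hole: it asserts, with details omitted, that the kernel of $k_p^{-2}(\mathbf{K}^{k_p,k_s,*}-\mathbf{K}^{0,k_s,*})$ is of order $-1$.

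What your Step 2 does prove is the weaker, correct statement $\mathbf{K}^{k_p,k_s,*}=\mathbf{K}^{0,k_s,*}+\mathcal{O}_{H^s\to H^s}(k_p^2)$. The $k_p^2$-coefficient is an odd kernel homogeneous of degree $-2$ plus weaker terms, hence a classical operator of order zero. The terms of order $k_p^j$, $j\ge 3$, have bounded kernels.

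This weaker statement is all that is used later:
\begin{itemize}
\item Lemma \ref{lemma34Bkpks} only needs $\frac12\mathbf{I}+\mathbf{K}^{k_p,k_s,*}$ to be a small $H^s\to H^s$ perturbation of the index-zero Fredholm operator $\frac12\mathbf{I}+\mathbf{K}^{0,k_s,*}$.
\item In Corollary \ref{coroNkpks}, the gain of one derivative comes from $\mathbf{S}^{k_p,k_s}$ (Lemma \ref{continuityK}), not from $\mathbf{K}$.
\end{itemize}
A minor slip: $\Delta G_2=-G_0$, not $-2G_0$.
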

\begin{proof}
    Since $\partial D$ is smooth, for $x,y\in \partial D$, we have
    \begin{equation}
        (x-y) \cdot \mathbf{n}(x) = O(|x-y|^2) \qquad \mathrm{as} \ |x-y| \rightarrow 0.
        \end{equation}
    By a simple computation, we have
    \begin{equation}
        \Big(2\nabla^s \bm{\Gamma}^{0,k_s} (x-y)- \Pi^{0,k_s}(x-y) \otimes \mathbf{I} \Big) \mathbf{n} (x) = \frac{3(x-y) \cdot \mathbf{n}(x)}{4\pi} \frac{(x-y)\otimes (x-y)}{|x-y|^5} +O(1).
    \end{equation}
    From the above, it follows that the kernel of $\mathbf{K}^{0,k_s,*}$ is of order $-1$. Therefore, $\mathbf{K}^{0,k_s,*}$ is a pseudodifferential operator of order $-1$, and \eqref{C20bounded} follows from the theory of pseudodifferential operators.
    Moreover, \eqref{C21continu} follows by computing the kernel of
    \begin{equation}
        k_p^{-2} (\mathbf{K}^{k_p,k_s,*} - \mathbf{K}^{0,k_s,*})
    \end{equation}
    and showing that the kernel is of order $-1$. We omit the details.
\end{proof}

Similarly to Lemma \ref{lemmaKmaping}, we have the following mapping property for the single-layer potentials.

\begin{lemma}\label{continuityK}
    For any $s\in\mathbb{R}$, 
    \begin{equation}
        \mathbf{S}^{k_p,k_s}:H^s(\partial D;\mathbb{C}^3) \rightarrow H^{s+1}(\partial D;\mathbb{C}^3)
    \end{equation}
    is a bounded operator. Moreover, as $k_p\rightarrow 0$, 
    \begin{equation}
        \mathbf{S}^{k_p,k_s} = \mathbf{S}^{0,k_s} + \mathcal{O}_{H^s \rightarrow H^{s+1}}(k_p).
    \end{equation}
\end{lemma}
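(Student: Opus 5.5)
The statement to prove is the last one displayed, Lemma \ref{continuityK}: boundedness of $\mathbf{S}^{k_p,k_s}$ from $H^s$ to $H^{s+1}$ and the expansion $\mathbf{S}^{k_p,k_s}=\mathbf{S}^{0,k_s}+\mathcal{O}(k_p)$. I would argue as in Lemma \ref{lemmaKmaping}, through the kernel of the operator.

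\textbf{Step 1: structure of the kernel.} Expand $\mathbf{\Gamma}^{k_p,k_s}$ near the diagonal. By \eqref{depsd}, write
\begin{equation}
\mathbf{\Gamma}^{k_p,k_s}=\frac{1}{k_s^2}\nabla\nabla\big(G^{k_s}-G^{k_p}\big)+G^{k_s}\mathbf{I}.
\end{equation}
By Lemma \ref{lemmaC1}, the difference $G^{k_s}-G^{k_p}=\sum_{j\ge1}(k_s^j-k_p^j)G_j$ has no $1/r$ singularity. Its $j=1$ term is a constant, so $\nabla\nabla$ kills it. The $j=2$ term is a multiple of $r$, and $\nabla\nabla r$ is homogeneous of degree $-1$. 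The terms with $j\ge3$ are smoother still.

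\textbf{Step 2: boundedness.} From Step 1, $\mathbf{\Gamma}^{k_p,k_s}(x)$ is a sum of terms homogeneous of degree $-1$ (coming from $G_0\mathbf{I}$ and from $\nabla\nabla r$), plus terms that are $C^\infty$ or have weaker, classical singularities. This is the Stokes-type (Oseen tensor) singularity. Restricted to the smooth surface $\partial D$, such a kernel defines a classical pseudodifferential operator of order $-1$. Hence
\begin{equation}
\mathbf{S}^{k_p,k_s}:H^s(\partial D;\mathbb{C}^3)\to H^{s+1}(\partial D;\mathbb{C}^3)
\end{equation}
is bounded for every $s$. All the constants are uniform for $k_p$ in a small neighbourhood of $0$ and $k_s$ fixed with $\mathrm{Im}\,k_s>0$.

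\textbf{Step 3: the expansion in $k_p$.} Compute the difference kernel
\begin{equation}
\mathbf{\Gamma}^{k_p,k_s}-\mathbf{\Gamma}^{0,k_s}=-\frac{1}{k_s^2}\nabla\nabla\big(G^{k_p}-G^{0}\big)=-\frac{1}{k_s^2}\sum_{j\ge2}k_p^j\nabla\nabla G_j.
\end{equation}
Since $\nabla\nabla G_1=0$, the sum starts at $j=2$. The leading term is $k_p^2\nabla\nabla G_2$, which is homogeneous of degree $-1$ and so gives an order $-1$ operator by Step 2. The terms with $j\ge3$ have kernels $C^{j-3,1}$ times powers of $r$. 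Their operator norms are bounded by $C^j/j!$ times a polynomial in $j$, so the series converges for small $|k_p|$. This gives $\mathcal{O}_{H^s\to H^{s+1}}(k_p^2)$, which in particular implies the stated $\mathcal{O}(k_p)$.

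\textbf{Main obstacle.} The delicate step is making the mapping bound for the higher terms uniform in $j$ for negative $s$. I would handle this by duality: $\mathbf{S}$ is symmetric up to transpose, so it suffices to prove the bounds for $s\ge -1/2$ and transpose. For $s$ in that range, each $r^{j-3}$-type kernel gives a bounded map into $H^{s+1}$, with polynomial-in-$j$ constants, by standard estimates for weakly singular integral operators on smooth surfaces.
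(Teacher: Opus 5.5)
Your proposal is correct and follows essentially the route the paper intends. The paper gives no separate proof and only says the lemma holds ``similarly to Lemma \ref{lemmaKmaping}'', i.e.\ by the kernel-order argument you carry out: an order $-1$ Stokeslet-type singularity gives boundedness, and the difference kernel $-k_s^{-2}\nabla\nabla(G^{k_p}-G^{0})$ starts at $k_p^2$, which yields $\mathcal{O}_{H^s\to H^{s+1}}(k_p^2)$, mirroring the $k_p^{-2}$ normalization used for $\mathbf{K}^{k_p,k_s,*}$ and sharper than the stated bound.
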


\section{Preliminaries on differential geometry}\label{appdexdg}
%[Huisken (1984), JDG, Flow by mean curvature of convex surfaces into spheres]

In this section of the appendix, we recall some basic facts from differential geometry and derive the first variation formula for the mean curvature $H$ which was used at the end of Appendix \ref{appenequ}. Most of the material is standard and can be found, for example, in \cite{do2016differential,huisken1984flow}. For the reader's convenience, however, we present the relevant details in a self-contained manner.

Let $\Sigma \subset \mathbb{R}^{d+1}$ be a smooth hypersurface, and $T\Sigma$ be the tangent bundle of $\Sigma$. We identify each fiber of $T\Sigma$ with a $d$-dimensional subspace of $\mathbb{R}^{d+1}$ via the natural embedding $\iota: \Sigma \rightarrow \mathbb{R}^{d+1}$. $\R^{d+1}$ is equipped with the standard inner product $\langle\cdot,\cdot\rangle$.

\subsection{Riemannian metric and second fundamental form}\label{secRmetric} The induced Riemannian metric $g$ on $\Sigma$ from the embedding $\iota$ is given by
\begin{equation}
    g_x(v,w) := \langle v, w \rangle, \qquad \forall x\in\Sigma \ \mathrm{and} \ v,w \in T_x \Sigma.
\end{equation}
The metric $g$ can be represented in local coordinates as follows. Let $\varphi:U\subset \mathbb{R}^d \rightarrow \Sigma$ be a local chart of $\Sigma$, where $U$ is open, and define
\begin{equation}\label{defej}
    e_j := \partial_j \varphi \circ \varphi^{-1} : \varphi(U) \subset \Sigma \rightarrow\mathbb{R}^{d+1}, \qquad \forall j =1,\cdots,d.
\end{equation}
Then, $\{e_1,\cdots,e_d\}$ is a local frame field of $T\Sigma$. For any tangent vector fields $X,Y$ on $\varphi(U)$, they must have the form
\begin{equation}\label{expansiononSigma}
    X= \sum_{i=1}^d X^i e_i \qquad \mathrm{and} \qquad Y = \sum_{j=1}^d Y^j e_j
\end{equation}
for some $X^i, \,Y^j :  \varphi(U) \rightarrow \mathbb{R}$; in other words, $(X^i)_i$ and $(Y^i)_i$ are the coordinates of $X$ and $Y$ in the frame $\{e_1,\dots,e_d\}$. %Define $g_{ij}: \varphi(U) \rightarrow \mathbb{R}$ by the following relation:
%\begin{equation}
%    g(X,Y) = \sum_{i,j=1}^d g_{ij} X^i Y^j ,
%\end{equation}
%where $X^i,Y^j$ are given in \eqref{expansiononSigma}. 
Then, let $g_{ij} = g_{ij}(x) : \varphi(U) \rightarrow \mathbb{R}$ be defined by
\begin{equation}\label{localmetric}
    g_{ij}(x) = g_x(e_i,e_j) = \langle e_i(x),e_j(x)\rangle, \qquad \quad x\in \varphi(U).
\end{equation}
We have
\begin{equation}
    g(X,Y) = \sum_{i,j=1}^d g_{ij} X^i Y^j.
\end{equation}

For vector fields on $\R^{d+1}$, we use the standard coordinate frame $\{\partial_1,\cdots,\partial_{d+1}\}$, i.e., the standard orthonormal basis of $\R^{d+1}$. Let $\overline{\nabla}$ denote the Levi-Civita connection on $\R^{d+1}$, which is given by directional derivative. More precisely, suppose that $X,Y$ are differentiable vector fields on $\mathbb{R}^{d+1}$, which have the form
\begin{equation}\label{decompositiononE}
    X = \sum_{k=1}^{d+1} \widetilde{X}^k \partial_k \qquad \mathrm{and} \qquad Y = \sum_{m=1}^{d+1} \widetilde{Y}^m \partial_m,
\end{equation}
where $(\widetilde{X}^k)_k$ and $(\widetilde{Y}^m)_m$ are the coordinates of $X$ and $Y$ in the standard basis. Then
\begin{equation}
    \overline{\nabla}_X Y : =  \sum_{k,m=1}^{d+1} \widetilde{X}^k \partial_k \widetilde{Y}^m  \partial_m.
\end{equation}

Since, throughout this section, only local properties of the surface $\Sigma$ are concerned, we assume that $\Sigma$ is parameterized by a single diffeomorphism $\varphi: U\subset \mathbb{R}^d \rightarrow \Sigma$. 

\begin{lemma}\label{gradienteiej}
Let $e_1,\cdots,e_d$ be defined in \eqref{defej}. Then we have
    \begin{equation}
    \overline{\nabla}_{e_i} e_j = \partial_{ij}^2 \varphi  \circ \varphi^{-1} \qquad \qquad \text{on } \Sigma.
\end{equation}
\end{lemma}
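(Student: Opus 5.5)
The identity follows by direct computation from the definitions \eqref{defej}, working in the single chart $\varphi:U\to\Sigma$. I would show that both sides, evaluated at a point $x=\varphi(u)\in\Sigma$, equal $\partial^2_{ij}\varphi(u)$.

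First, write $e_j$ in the standard frame: $e_j = \sum_{m=1}^{d+1} (\partial_j\varphi^m\circ\varphi^{-1})\,\partial_m$, so $\widetilde{e_j}^m = \partial_j\varphi^m\circ\varphi^{-1}$. The function $\widetilde{e_j}^m$ is defined only on $\Sigma$, not on an open subset of $\mathbb{R}^{d+1}$. Since $e_i$ is tangent to $\Sigma$, however, the directional derivative $\overline{\nabla}_{e_i}e_j$ only requires differentiating along $\Sigma$. Concretely, I would either extend $\widetilde{e_j}^m$ arbitrarily to a smooth function near $\Sigma$, or interpret $\sum_k \widetilde{X}^k\partial_k f$ as $\frac{d}{dt}f(\gamma(t))|_{t=0}$ for a curve $\gamma$ on $\Sigma$ with $\gamma'(0)=X$. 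In either case I would observe that the result does not depend on the extension, because $X$ is tangent. This well-definedness remark is the only subtle point, and I expect it to be the main obstacle.

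Next, I would take the curve $\gamma(t)=\varphi(u+t\mathbf{e}_i)$, where $\mathbf{e}_i$ is the $i$-th standard basis vector of $\mathbb{R}^d$. Then $\gamma(0)=x$ and $\gamma'(0)=\partial_i\varphi(u)=e_i(x)$, so
\[
\big(\overline{\nabla}_{e_i}e_j\big)^m(x)=\frac{d}{dt}\Big|_{t=0}\partial_j\varphi^m\big(\varphi^{-1}(\gamma(t))\big)=\frac{d}{dt}\Big|_{t=0}\partial_j\varphi^m(u+t\mathbf{e}_i)=\partial^2_{ij}\varphi^m(u).
\]
The chain rule is trivial here because $\varphi^{-1}\circ\gamma$ is the straight line $t\mapsto u+t\mathbf{e}_i$. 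Equivalently, with an extension $E$ of $\partial_j\varphi^m\circ\varphi^{-1}$, one gets $\sum_k \partial_i\varphi^k\,\partial_k E = \partial_i\big(E\circ\varphi\big)=\partial_i\partial_j\varphi^m$.

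Collecting the components over $m=1,\dots,d+1$ gives $\overline{\nabla}_{e_i}e_j=\partial^2_{ij}\varphi\circ\varphi^{-1}$ on $\Sigma$. The symmetry $\overline{\nabla}_{e_i}e_j=\overline{\nabla}_{e_j}e_i$ then comes for free, and it will be used later for the second fundamental form.
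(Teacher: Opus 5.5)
Your proposal is correct and matches the paper's proof. The paper likewise writes $e_j=\sum_m(\partial_j\varphi^m\circ\varphi^{-1})\partial_m$ and applies the chain rule $\partial_{ij}^2\varphi^m=\sum_k\partial_k(\partial_j\varphi^m\circ\varphi^{-1})\circ\varphi\,\partial_i\varphi^k$, which is exactly your extension computation, and it handles independence from the extension by a remark just before the proof; your curve formulation is an equivalent phrasing of the same directional derivative.
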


Here, we understand the value of $\overline{\nabla}_{e_i}e_j$ on $\Sigma$ as the restriction of $\overline{\nabla}_{E_i} E_j$ on $\Sigma$ for some extensions of $e_i$ and $e_j$ to vector fields $E_i$ and $E_j$ in $\R^{d+1}$. However, the restriction on $\Sigma$ does not depend on the extension.

\begin{proof}
Let $\varphi = (\varphi^1, \cdots, \varphi^{d+1})$. Then, in standard coordinates on $\R^{d+1}$, $e_j$ is given by
    \begin{equation}
    e_j = \sum_{k=1}^{d+1} (\partial_j \varphi^k \circ \varphi^{-1} )\partial_k,
\end{equation}
and
\begin{equation}
    \overline{\nabla}_{e_i} e_j = \sum_{k,m=1}^{d+1} (\partial_i \varphi^k \circ \varphi^{-1} ) \partial_k \{ \partial_j \varphi^m \circ \varphi^{-1}  \} \partial_m.
\end{equation}
By the chain rule, we have
\begin{equation}
    \partial_{ij}^2 \varphi^m = \partial_i \big\{ (\partial_j \varphi^m \circ \varphi^{-1} ) \circ  \varphi  \big\} = \sum_{k=1}^{d+1} \partial_k (\partial_j \varphi^m \circ \varphi^{-1} ) \circ \varphi\cdot \partial_i \varphi^k.
\end{equation}
So,
\begin{equation}
    \overline{\nabla}_{e_i} e_j = \sum_{m=1}^{d+1} (\partial_{ij}^2 \varphi^m  \circ \varphi^{-1} )\partial_m =\partial_{ij}^2 \varphi  \circ \varphi^{-1}.
\end{equation}
The proof is complete.
\end{proof}

Let $\mathbf{n}\in C^{\infty}(\Sigma,\mathbb{R}^{d+1})$ be a smooth normal vector field on $\Sigma$:
\begin{equation}
    | \mathbf{n}(x) |=1, \qquad \langle \mathbf{n}(x), v\rangle =0, \qquad \forall x\in \Sigma \ \mathrm{and} \ v\in T_x\Sigma.
\end{equation}
The second fundamental form $h$ of $(\Sigma, \mathbf{n})$ is defined by
\begin{equation}
    h_x(X,Y) := \langle \overline{\nabla}_X Y, \mathbf{n} \rangle , \qquad \forall x\in\Sigma \ \mathrm{and} \ X,Y \in T_x \Sigma.
\end{equation}
Note that the sign of $h$ depends on the choice of $\mathbf{n}$.

In the coordinate system $\{e_1,\dots,e_d\}$ introduced in Section \ref{secRmetric}, the second fundamental form $h$ has the following representation: for $X=\sum_{i=1}^dX^i e_i$ and $Y=\sum_{j=1}^d Y^j e_j$,
\begin{equation}
    h(X,Y) = \sum_{i,j=1}^d h_{ij} X^i Y^j
\end{equation}
holds, where $h_{ij} = h_{ij}(x): \varphi(U) \rightarrow \mathbb{R}$ is defined by 
\begin{equation}\label{localh}
    h_{ij}(x)= h_x(e_i,e_j) = \langle \overline{\nabla}_{e_i} e_j, \mathbf{n} \rangle(x) = \langle \partial_{ij}^2 \varphi  \circ \varphi^{-1} , \mathbf{n} \rangle(x).
\end{equation}
In the last equality, we used the formula in Lemma \ref{gradienteiej}.

\begin{lemma}\label{lempnpe}
    We have
\begin{equation}\label{partialiofn}
    \frac{\partial \mathbf{n}}{\partial e_i} = - \sum_{j,s=1}^d h_{ij} g^{js} e_s, \qquad \forall i =1,\cdots,d.
\end{equation}
\end{lemma}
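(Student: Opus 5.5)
Differentiate the identity $\langle \mathbf{n}, \mathbf{n}\rangle = 1$ and the orthogonality relations $\langle \mathbf{n}, e_j\rangle = 0$ along the frame vectors, then expand the resulting tangent vector in the frame $\{e_1,\dots,e_d\}$ using the inverse metric.

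\textbf{Tangency.} Fix $i$ and write $\frac{\partial \mathbf{n}}{\partial e_i} := \overline{\nabla}_{e_i}\mathbf{n}$ for the directional derivative of $\mathbf{n}$ along $e_i$, computed in a chart as $\partial_i(\mathbf{n}\circ\varphi)\circ\varphi^{-1}$. Differentiating $\langle \mathbf{n},\mathbf{n}\rangle\equiv 1$ along $e_i$ gives $2\langle \overline{\nabla}_{e_i}\mathbf{n},\mathbf{n}\rangle = 0$. Hence $\overline{\nabla}_{e_i}\mathbf{n}$ is tangent to $\Sigma$, and we may write
\begin{equation}
\overline{\nabla}_{e_i}\mathbf{n} = \sum_{s=1}^d c_i^s e_s
\end{equation}
for some functions $c_i^s$ on $\varphi(U)$.

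\textbf{Identifying the coefficients.} Differentiate $\langle \mathbf{n}\circ\varphi, \partial_j\varphi\rangle = 0$ with respect to $u_i$ in the chart. The product rule gives
\begin{equation}
\langle \partial_i(\mathbf{n}\circ\varphi), \partial_j\varphi\rangle = -\langle \mathbf{n}\circ\varphi, \partial^2_{ij}\varphi\rangle .
\end{equation}
By \eqref{localh}, the right-hand side is $-h_{ij}$. By Lemma \ref{gradienteiej} and the chain rule, the left-hand side is $\langle \overline{\nabla}_{e_i}\mathbf{n}, e_j\rangle$. Therefore
\begin{equation}
\langle \overline{\nabla}_{e_i}\mathbf{n}, e_j\rangle = -h_{ij} .
\end{equation}
Substituting the expansion from the first step and using \eqref{localmetric}, this reads $\sum_s c_i^s g_{sj} = -h_{ij}$ for every $j$.

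\textbf{Inversion.} The matrix $(g_{ij})$ is symmetric positive definite because $\{e_j\}$ is a frame, so it has an inverse $(g^{js})$. Multiplying by $g^{js}$ and summing over $j$ gives $c_i^s = -\sum_j h_{ij}g^{js}$, which is \eqref{partialiofn}.

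This is the Weingarten equation, and the argument is entirely routine. The only point needing care is the one in the second step: the derivative of $\mathbf{n}$ along $e_i$ must be interpreted through the chart, so that it agrees with the restriction of $\overline{\nabla}_{E_i}$ for any extension $E_i$ of $e_i$, exactly as in the remark after Lemma \ref{gradienteiej}. The sign convention then follows from the chosen orientation of $\mathbf{n}$ in the definition of $h$.
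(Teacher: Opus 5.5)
Your proposal is correct and follows the paper's proof step for step. You get tangency by differentiating $|\mathbf{n}|^2=1$, then differentiate $\langle \mathbf{n}\circ\varphi,\partial_j\varphi\rangle=0$ to obtain $\sum_s c_i^s g_{sj}=-h_{ij}$, and finish by inverting the metric. One small correction: the identification of $\langle \partial_i(\mathbf{n}\circ\varphi),\partial_j\varphi\rangle$ with $\langle \partial\mathbf{n}/\partial e_i, e_j\rangle$ is simply the chart definition of the derivative. Lemma \ref{gradienteiej} enters only on the right-hand side, through \eqref{localh}.
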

\begin{proof}
%Since \eqref{partialiofn} is a local property, we only compute $\frac{\partial \mathbf{n}}{\partial e_i} (x)$ for $x\in \varphi(U)$. Note that, 
By definition, 
\begin{equation}
    \frac{\partial \mathbf{n}}{\partial e_i} (x) = \partial_i(\mathbf{n} \circ\varphi)\circ \varphi^{-1}(x).
\end{equation}
    We first show that $ \frac{\partial \mathbf{n}}{\partial e_i} (x) \in T_x \Sigma$. For any $y \in U$, denote
    \begin{equation}
        \tilde{\mathbf{n}}(y) = \mathbf{n} \circ \varphi(y).
    \end{equation}
Since $|\mathbf{n}|=1$, we have
\begin{equation}
    0= \partial_i \langle \tilde{\mathbf{n}}(y),\tilde{\mathbf{n}}(y)\rangle = 2\langle \tilde{\mathbf{n}}(y),\partial_i\tilde{\mathbf{n}}(y)\rangle.
\end{equation}
Let $y = \varphi^{-1}(x)$. We get
\begin{equation}\label{partialnnormal}
    \left\langle \mathbf{n}(x)  ,\frac{\partial \mathbf{n}}{\partial e_i} (x) \right\rangle  = \langle \mathbf{n}(x)  ,\partial_i\tilde{\mathbf{n}} \circ \varphi^{-1}(x)\rangle =0 .
\end{equation}
So, $ \frac{\partial \mathbf{n}}{\partial e_i} (x) \in T_x \Sigma$. Assume that
\begin{equation}
    \frac{\partial \mathbf{n}}{\partial e_i}  = \sum_{s=1}^d A_i^s e_s.
\end{equation}
Since $\langle \tilde{\mathbf{n}}(y),e_j\circ \varphi(y)\rangle =0$ for any $y\in U$ and $j=1,\cdots,d$, we have
\begin{equation}
\begin{aligned}
    0&= \langle \partial_i\tilde{\mathbf{n}}(y),e_j\circ \varphi(y)\rangle + \langle \tilde{\mathbf{n}}(y), \partial_i (e_j\circ \varphi)(y)\rangle \\
    &=\langle \partial_i\tilde{\mathbf{n}}(y),e_j\circ \varphi(y)\rangle + \langle \tilde{\mathbf{n}}(y), \partial_{ij}^2 \varphi(y)\rangle.
\end{aligned}
\end{equation}
Let $y = \varphi^{-1}(x)$ for $x\in \Sigma$. We obtain that
\begin{equation}
     \sum_{s=1}^d A_i^s(x) g_{sj} (x)=\left\langle \frac{\partial \mathbf{n}}{\partial e_i} (x) ,e_j(x) \right\rangle   =- \langle \mathbf{n}(x), \partial_{ij}^2 \varphi \circ \varphi^{-1}(x)\rangle = -h_{ij}(x),
\end{equation}
where we used \eqref{localh} for the last equality. Therefore,
\begin{equation}
    A_i^s = - \sum_{j=1}^d h_{ij} g^{js}.
\end{equation}
The proof is complete.
\end{proof}

\subsection{First variation of the mean curvature}

The mean curvature $H$ of $(\Sigma,\mathbf{n})$ is defined by
\begin{equation}\label{defHmeancur}
    H := \frac{1}{d}\mathrm{tr}_g h = \frac{1}{d}\sum_{i,j=1}^d g^{ij}h_{ij},
\end{equation}
where $(g^{ij})$ denotes the inverse of the metric $g$. 
We now study the first variation of $H$. 

Consider a smooth one-parameter family of hypersurfaces:
\begin{equation}
    F(\cdot,t):(\Sigma , \mathbf{n}) \rightarrow \mathbb{R}^{d+1}, \qquad F(\cdot,0)=\mathrm{Id}_{(\Sigma , \mathbf{n})}.
\end{equation}
Denote $\Sigma_0 = F(\Sigma,0)=\Sigma$ and $\Sigma_t =F(\Sigma,t)$. We denote
\begin{equation}
    \varphi_t = F(\varphi,t):U\rightarrow \Sigma_t.
\end{equation}
Then $(\varphi_t,U)$ is a global chart of $\Sigma_t$. Define
\begin{equation}
   (e_t)_j= \partial_j \varphi_t\circ \varphi_t^{-1} : \Sigma_t \rightarrow \mathbb{R}^{d+1}.
\end{equation}
Then $\{(e_t)_1,\cdots,(e_t)_d\}$ is a global frame field of $T\Sigma_t$. 

Define $\mathbf{n}_t$ as the normal vector field of $\Sigma_t$ such that the map $t\rightarrow \mathbf{n}_t$ is smooth at $t=0$. Let $g_t$ and $h_t$ be the Riemannian metric and the second fundamental form of $(\Sigma_t,\mathbf{n}_t)$, respectively. We define
\begin{equation}\label{defhxt}
    g(x,t)=g_t(F(x,t)) ,\qquad  h(x,t)= h_t(F(x,t)),  \qquad \forall (x,t) \in \Sigma \times \mathbb{R},
\end{equation}
and
\begin{equation}
    H(x,t)= \frac{1}{d}\sum_{i,j=1}^d g^{ij}(x,t)h_{ij}(x,t), \qquad \forall (x,t) \in \Sigma \times \mathbb{R}.
\end{equation}
We denote
\begin{equation}
    \delta(\cdot):= \left.\frac{d}{dt}\right|_{t=0}(\cdot).
\end{equation}

\begin{lemma}\label{lemmadeltag}
Assume that
\begin{equation}
    \partial_t F(x,0) = \zeta(x) \mathbf{n}(x), \qquad \forall x\in \Sigma,
\end{equation}
where $\zeta: \Sigma \rightarrow \mathbb{R}$ denotes the normal speed. Then,
    \begin{equation}
        \delta g= -2\zeta h.
    \end{equation}
\end{lemma}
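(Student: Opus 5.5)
The plan is to compute the $t$-derivative of the local metric coefficients $g_{ij}(x,t)=\langle (e_t)_i,(e_t)_j\rangle$ directly in the chart $\varphi_t=F(\varphi,t)$. Recall $(e_t)_j\circ\varphi_t=\partial_j\varphi_t$. Pulled back to $U$, this gives $g_{ij}(\varphi(y),t)=\langle \partial_i\varphi_t(y),\partial_j\varphi_t(y)\rangle$. Since $F$ is smooth in $(x,t)$, the derivatives $\partial_t$ and $\partial_j$ commute. Hence
\begin{equation}
\delta g_{ij}=\langle \partial_i(\partial_t\varphi_t|_{t=0}),\partial_j\varphi\rangle+\langle \partial_i\varphi,\partial_j(\partial_t\varphi_t|_{t=0})\rangle ,
\end{equation}
everything being evaluated at $y=\varphi^{-1}(x)$.

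Next, I will insert the hypothesis $\partial_t F(x,0)=\zeta(x)\mathbf{n}(x)$. This gives $\partial_t\varphi_t|_{t=0}=(\zeta\mathbf{n})\circ\varphi$, so that
\begin{equation}
\partial_i\big((\zeta\mathbf{n})\circ\varphi\big)=(\partial_i(\zeta\circ\varphi))\,\mathbf{n}\circ\varphi+\zeta\circ\varphi\;\frac{\partial\mathbf{n}}{\partial e_i}\circ\varphi .
\end{equation}
The first term is normal, so it pairs to zero against the tangent vector $\partial_j\varphi=e_j\circ\varphi$. For the second term I use Lemma \ref{lempnpe}:
\begin{equation}
\Big\langle \frac{\partial\mathbf{n}}{\partial e_i},e_j\Big\rangle=-\sum_{k,s}h_{ik}g^{ks}g_{sj}=-h_{ij}.
\end{equation}
The first summand of $\delta g_{ij}$ therefore equals $-\zeta h_{ij}$. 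By the same computation with $i$ and $j$ exchanged, the second summand equals $-\zeta h_{ji}=-\zeta h_{ij}$, using the symmetry of $h$, which follows from \eqref{localh} and $\partial^2_{ij}\varphi=\partial^2_{ji}\varphi$.

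Adding the two contributions gives $\delta g_{ij}=-2\zeta h_{ij}$ in every chart, which is the claim $\delta g=-2\zeta h$. The only point needing care is the meaning of $\delta g$. By \eqref{defhxt}, $g(x,t)$ is the metric of $\Sigma_t$ read at $F(x,t)$. I represent it through the frame $(e_t)_j$ carried along by $F$, so that its components are exactly $\langle\partial_i\varphi_t,\partial_j\varphi_t\rangle$ at the fixed parameter point $y$; with this identification the above differentiation is legitimate. Beyond that, the argument is a short direct computation with no real obstacle.
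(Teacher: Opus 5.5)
Your proposal is correct and follows essentially the same route as the paper: you write $g_{ij}$ as $\langle\partial_i\varphi_t,\partial_j\varphi_t\rangle\circ\varphi^{-1}$, commute $\partial_t$ with $\partial_i$, and insert $\partial_t\varphi_t|_{t=0}=(\zeta\mathbf{n})\circ\varphi$. You then use Lemma \ref{lempnpe} together with $\mathbf{n}\perp e_j$ to obtain $-\zeta h_{ij}$ from each factor; the symmetry $h_{ij}=h_{ji}$, which you invoke explicitly when adding the two contributions, is used only tacitly in the paper.
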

\begin{proof}
Let $x\in\Sigma$. By definition and \eqref{localmetric},
\begin{equation}\label{repofg}
    g_{ij}(x,t) = (g_t)_{ij}(F(x,t)) = \langle (e_t)_i(F(x,t)),(e_t)_j (F(x,t))\rangle_{\mathbb{R}^{d+1}} .
\end{equation}
Since
\begin{equation}
    (e_t)_i(F(x,t)) = \partial_i \varphi_t \circ \varphi_t^{-1} \circ F(x,t)=\partial_i \varphi_t \circ \varphi^{-1}(x),
\end{equation}
we get
\begin{equation}\label{deltae}
\begin{aligned}
    \delta \big\{(e_t)_i(F(\cdot,t)) \big\} &= \partial_i (\delta\varphi_t )\circ \varphi^{-1} = \partial_i \big\{(\zeta \mathbf{n})\circ\varphi \big\}\circ \varphi^{-1} \\
    &= \frac{\partial (\zeta \mathbf{n})}{\partial e_i} \\
     &= \frac{\partial \zeta }{\partial e_i}\mathbf{n}  + \zeta \frac{\partial  \mathbf{n}}{\partial e_i} .\\
    % &= (\partial_i \tilde{\zeta} \tilde{\mathbf{n}}+ \tilde{\zeta} \partial_i\tilde{\mathbf{n}} )\circ \varphi^{-1}(x),
\end{aligned}
\end{equation}
It follows that
\begin{equation}
    \left\langle \delta \big\{(e_t)_i(F(\cdot,t)) \big\}, e_j  \right\rangle  = - \zeta h_{ij},
\end{equation}
where we used Lemma \ref{lempnpe} and the fact that $\mathbf{n} \perp e_j$. Therefore, taking $\delta$ on both sides of \eqref{repofg}, we obtain the desired conclusion.
\end{proof}

Define the connection $\nabla$ on $\Sigma$ by
\begin{equation}
    \nabla_X Y := \overline{\nabla}_XY - h(X,Y) \mathbf{n}
\end{equation}
for any tangent vector fields $X,\,Y$ on $\Sigma$. For any $f \in C^{\infty}(\Sigma)$, we define the Hessian tensor $\mathrm{Hess}\,f$ by
\begin{equation}
    \mathrm{Hess}\,f (X,Y): = \nabla_X (df)(Y) = \nabla_X (df(Y)) - df(\nabla_X Y) ,
\end{equation}
where the last equality is the ``Leibniz rule''. Define the Laplace-Beltrami operator $\Delta_\Sigma$ on $\Sigma$ by
\begin{equation}\label{defLBel}
    \Delta_\Sigma f := \mathrm{tr}_g \mathrm{Hess}\,f = \sum_{i,j=1}^d g^{ij}  (\mathrm{Hess}\,f)_{ij} = \sum_{i,j=1}^d g^{ij} \mathrm{Hess}\,f(e_i,e_j).
\end{equation}
The gradient $\nabla_\Sigma$ on $\Sigma$ is defined by
\begin{equation}
    \nabla_\Sigma f := (df)^\sharp = \sum_{i,j=1}^d g^{ij} \frac{\partial f}{\partial e_i} e_j.
\end{equation}
% \begin{equation}
%     \Delta_{\Sigma} f := \sum_{i,j=1}^d g^{ij} \frac{\partial^2 f}{\partial e_i \partial e_j}.
% \end{equation}
The squared norm of the second fundamental form $h$ is defined by
\begin{equation}\label{defsqaurenorm}
    |B|^2 := \sum_{i,j,s,r=1}^d h_{is} h_{jr} g^{ij} g^{sr}.
\end{equation}

\begin{proposition}\label{firstvariameancurva}
    Assume that
\begin{equation}
    \partial_t F(x,0) = \zeta(x) \mathbf{n}(x), \qquad \forall x\in \Sigma,
\end{equation}
where $\zeta: \Sigma \rightarrow \mathbb{R}$ denotes the normal speed. Then,
\begin{equation}
    \delta H= \frac{1}{d}(\Delta_\Sigma + |B|^2) \zeta.
\end{equation}
\end{proposition}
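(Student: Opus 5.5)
We prove $\delta H = \frac1d(\Delta_\Sigma+|B|^2)\zeta$ by differentiating $H(x,t)=\frac1d\sum_{i,j}g^{ij}(x,t)h_{ij}(x,t)$ at $t=0$ in the fixed chart $\varphi_t=F(\varphi,t)$. The product rule gives
\begin{equation}
    d\,\delta H = \sum_{i,j}\big(\delta g^{ij}\big)h_{ij} + \sum_{i,j} g^{ij}\,\delta h_{ij}.
\end{equation}
We evaluate the two sums separately.

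\emph{First sum.} Differentiating $\sum_j g^{ij}g_{jk}=\delta_{ik}$ gives $\delta g^{ij}=-\sum_{s,r}g^{is}(\delta g_{sr})g^{rj}$. By Lemma \ref{lemmadeltag}, $\delta g_{sr}=-2\zeta h_{sr}$, so $\delta g^{ij}=2\zeta\sum_{s,r}g^{is}h_{sr}g^{rj}$. Contracting with $h_{ij}$ and using \eqref{defsqaurenorm},
\begin{equation}
    \sum_{i,j}\big(\delta g^{ij}\big)h_{ij}=2\zeta|B|^2 .
\end{equation}

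\emph{Second sum.} By \eqref{localh}, $h_{ij}(x,t)=\langle\partial^2_{ij}\varphi_t,\mathbf n_t\rangle\circ\varphi^{-1}(x)$, so
\begin{equation}
    \delta h_{ij}=\langle \partial^2_{ij}(\delta\varphi_t),\mathbf n\rangle+\langle\partial^2_{ij}\varphi,\delta\mathbf n_t\rangle .
\end{equation}
\begin{itemize}
\item[(a)] For $\delta\mathbf n_t$: differentiate $|\mathbf n_t|^2=1$ and $\langle \mathbf n_t,\partial_j\varphi_t\rangle=0$. This gives $\delta\mathbf n_t\perp\mathbf n$ and $\langle\delta\mathbf n_t,e_j\rangle=-\langle\mathbf n,\partial_j(\zeta\mathbf n)\rangle=-\partial\zeta/\partial e_j$. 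Hence $\delta\mathbf n_t=-\nabla_\Sigma\zeta$, which agrees with \eqref{variationnormal}.
\item[(b)] For $\partial^2_{ij}(\zeta\mathbf n)\cdot\mathbf n$: expand by Leibniz. Since $\partial_i\mathbf n$ is tangent, $\langle\partial^2_{ij}\mathbf n,\mathbf n\rangle=-\langle\partial_i\mathbf n,\partial_j\mathbf n\rangle$. By Lemma \ref{lempnpe} this equals $-\sum h_{is}g^{sr}h_{rj}$. The result is
\begin{equation}
    \langle \partial^2_{ij}(\zeta\mathbf n),\mathbf n\rangle=\partial^2_{ij}\zeta-\zeta\sum_{s,r}h_{is}g^{sr}h_{rj}.
\end{equation}
\item[(c)] For the remaining term: decompose $\partial^2_{ij}\varphi=\nabla_{e_i}e_j+h_{ij}\mathbf n=\sum_k\Gamma^k_{ij}e_k+h_{ij}\mathbf n$. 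Then $\langle\partial^2_{ij}\varphi,-\nabla_\Sigma\zeta\rangle=-\sum_k\Gamma^k_{ij}\partial_k\zeta$.
\end{itemize}
Summing (b) and (c) gives
\begin{equation}
    \delta h_{ij}=(\mathrm{Hess}\,\zeta)_{ij}-\zeta\sum_{s,r}h_{is}g^{sr}h_{rj}.
\end{equation}
Tracing with $g^{ij}$ and using \eqref{defLBel}, \eqref{defsqaurenorm},
\begin{equation}
    \sum_{i,j}g^{ij}\,\delta h_{ij}=\Delta_\Sigma\zeta-\zeta|B|^2 .
\end{equation}

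\emph{Conclusion and the main obstacle.} Adding the two sums gives $d\,\delta H=\Delta_\Sigma\zeta-\zeta|B|^2+2\zeta|B|^2=(\Delta_\Sigma+|B|^2)\zeta$, which is the claim. The main obstacle is keeping the signs consistent:
\begin{itemize}
\item the convention $h=\langle\overline\nabla_XY,\mathbf n\rangle$;
\item the direction of $\mathbf n_t$ (it must be chosen continuous in $t$);
\item the identification of $\partial^2_{ij}\zeta-\sum_k\Gamma^k_{ij}\partial_k\zeta$ with $\mathrm{Hess}\,\zeta(e_i,e_j)$.
\end{itemize}
The last point follows from the Leibniz definition of the Hessian, using $\nabla_{e_i}e_j=\overline\nabla_{e_i}e_j-h_{ij}\mathbf n$ together with Lemma \ref{gradienteiej}.
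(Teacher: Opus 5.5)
Your proposal is correct and follows essentially the same route as the paper's proof. It uses the same product-rule split $d\,\delta H=\sum(\delta g^{ij})h_{ij}+\sum g^{ij}\delta h_{ij}$, the same derivation of $\delta\mathbf n_t=-\nabla_\Sigma\zeta$, and the same identity $\delta h_{ij}=\mathrm{Hess}\,\zeta(e_i,e_j)-\zeta\sum_{s,r}h_{is}g^{sr}h_{rj}$, obtained from the normal component of $\partial^2_{ij}(\zeta\mathbf n)$ and the tangential part $\nabla_{e_i}e_j$ of $\partial^2_{ij}\varphi$; your explicit check that $\sum_{i,j}(\delta g^{ij})h_{ij}=2\zeta|B|^2$ via Lemma \ref{lemmadeltag} simply spells out the step the paper compresses into its final line.
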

\begin{proof}
    Let $x\in \Sigma$. By the definition \eqref{defhxt} of $h(x,t)$ and \eqref{localh},
    it follows that
    \begin{equation}
        h_{ij}(x,t)= \big\langle \partial_{ij}^2 \varphi_t  \circ \varphi_t^{-1} \circ F(x,t)  , \mathbf{n}_t \circ F(x,t) \big\rangle.
    \end{equation}

\emph{Step 1. We first calculate $\delta \{\mathbf{n}_t\circ F(x,t) \}  $.} Since $\big\langle \mathbf{n}_t\circ F(x,t),\mathbf{n}_t\circ F(x,t) \big\rangle =1$, taking $\delta$ on both sides, we get
\begin{equation}
    \big\langle \delta \{\mathbf{n}_t\circ F(x,t) \} ,\mathbf{n}\circ F(x,t) \big\rangle =0.
\end{equation}
    This means that $ \delta \{\mathbf{n}_t\circ F(x,t) \} \in T_x \Sigma$. 
    
    For every $i=1,\cdots,d$, since $\big\langle \mathbf{n}_t\circ F(x,t) ,(e_t)_i\circ F(x,t) \big\rangle =0$, taking $\delta$ on both sides, we get
\begin{equation}
    \big\langle \delta \{\mathbf{n}_t\circ F(x,t) \} ,e_i(x) \big\rangle + \big\langle \mathbf{n}(x)  ,\delta \{(e_t)_i\circ F(x,t) \} \big\rangle =0.
\end{equation}
By \eqref{deltae} and Lemma \ref{lempnpe}, we get
\begin{equation}
    \big\langle \delta \{\mathbf{n}_t\circ F(\cdot,t) \} ,e_i(x) \big\rangle  = -\frac{\partial \zeta}{\partial e_i}.
\end{equation}
Therefore, 
\begin{equation}\label{firstvariationofn}
    \delta \{\mathbf{n}_t\circ F(\cdot,t) \}  = \sum_{i,j=1}^d  -g^{ij}   \frac{\partial \zeta}{\partial e_i}e_j = - \nabla_\Sigma \zeta.
\end{equation}

\emph{Step 2. We calculate the normal component of $\delta \big\{\partial_{ij}^2 \varphi_t  \circ \varphi_t^{-1} \circ F(x,t) \big\}$.} We have
\begin{equation}
\begin{aligned}
    \delta \big\{\partial_{ij}^2 \varphi_t  \circ \varphi_t^{-1} \circ F(\cdot,t) \big\}& = \delta \big\{\partial_{ij}^2 \varphi_t  \circ \varphi^{-1}  \big\} = \partial_{ij}^2 (\delta\varphi_t ) \circ \varphi^{-1}  \\
    &=\partial_{ij}^2 \big\{ (\zeta \mathbf{n} )\circ \varphi \big\}\circ \varphi^{-1} \\
    &=\frac{\partial^2 \zeta}{\partial e_i \partial e_j}\mathbf{n} + \frac{\partial \zeta}{\partial e_i} \frac{\partial \mathbf{n}}{\partial e_j} + \frac{\partial \zeta}{\partial e_j} \frac{\partial \mathbf{n}}{\partial e_i} + \zeta\frac{\partial^2 \mathbf{n} }{\partial e_i \partial e_j}.
\end{aligned}
\end{equation}

Since $\frac{\partial \mathbf{n}}{\partial e_j}(x)\in T_x\Sigma$, we have
\begin{equation}
    0 = \frac{\partial}{\partial e_i}\left\langle \frac{\partial \mathbf{n}}{\partial e_j}, \mathbf{n}\right\rangle = \left\langle \frac{\partial^2 \mathbf{n}}{\partial e_i \partial e_j}, \mathbf{n}\right\rangle + \left\langle \frac{\partial \mathbf{n}}{\partial e_j},  \frac{\partial \mathbf{n}}{\partial e_i}\right\rangle.
\end{equation}
By Lemma \ref{lempnpe}, the normal component of $\delta \big\{\partial_{ij}^2 \varphi_t  \circ \varphi_t^{-1} \circ F(\cdot,t) \big\}$ is
\begin{equation}\label{deltanormalcomp}
    \big\langle \delta \big\{\partial_{ij}^2 \varphi_t  \circ \varphi_t^{-1} \circ F(\cdot,t) \big\}  , \mathbf{n} \big\rangle = \frac{\partial^2 \zeta}{\partial e_i \partial e_j} - \zeta \sum_{s,r=1}^d h_{sj} h_{ri} g^{sr}.
\end{equation}

\emph{Step 3. We compute $\delta H$.} By the above steps, we get
\begin{equation}\label{deltahform}
\begin{aligned}
    \delta h_{ij} &=\frac{\partial^2 \zeta}{\partial e_i \partial e_j} - \zeta \sum_{s,r=1}^d h_{sj} h_{ri} g^{sr} - \langle \nabla_{e_i} e_j,\nabla_\Sigma \zeta \rangle_{\mathbb{R}^{d+1}} \\
    & = \mathrm{Hess}\,\zeta(e_i,e_j) - \zeta \sum_{s,r=1}^d h_{sj} h_{ri} g^{sr}.
\end{aligned}
\end{equation}
    Therefore,
    \begin{equation}
    \begin{aligned}
        \delta H &= \frac1d\sum_{i,j=1}^d \delta g^{ij} h_{ij} + \frac1d \sum_{i,j=1}^d  g^{ij} \delta h_{ij} = -\frac1d \sum_{i,j,r,s=1}^d g^{ir} g^{sj}  h_{ij} \delta g_{rs} +  \frac1d\sum_{i,j=1}^d  g^{ij} \delta h_{ij} \\
        & = \frac1d( \Delta_\Sigma + |B|^2 ) \zeta,
    \end{aligned}
    \end{equation}
    where we used Lemma \ref{lemmadeltag} and \eqref{deltahform} in the last equality. The proof is complete.
\end{proof}

\bibliographystyle{abbrv}
\bibliography{mybib}

\end{document}